\numberwithin{equation}{section}
\numberwithin{figure}{section}
\numberwithin{table}{section}
\numberwithin{footnote}{section}
\theoremstyle{definition}
\newtheorem{defi}{Definition}[section]
\newtheorem{scheme}{Scheme}[section]
\theoremstyle{plain}
\newtheorem{thm}{Theorem}[section]
\newtheorem{prop}{Property}[section]
\theoremstyle{remark}
\newtheorem{remark}{Remark}[section]
\newcommand{\ben}{\begin{eqnarray}}
\newcommand{\een}{\end{eqnarray}}
\newcommand{\bea}{\begin{array}}
\newcommand{\eea}{\end{array}}
\newcommand{\bes}{\begin{subequations}}
\newcommand{\ees}{\end{subequations}}
\newcommand{\bef}{\begin{figure}[H]}
\newcommand{\eef}{\end{figure}}
\newcommand{\bet}{\begin{tikzpicture}}
\newcommand{\eet}{\end{tikzpicture}}
\newcommand{\beq}{\begin{equation}}
\newcommand{\eeq}{\end{equation}}
\def\bena#1\eena{\begin{eqnarray}\begin{array}{l}#1\end{array}\end{eqnarray}}
\def\besl#1\eesl{\begin{subequations}\begin{equation}#1\end{equation}\end{subequations}}
\def\bx{\mathbf{x}}
\font\tenbi=cmmib10   at 11 pt
\font\sevenbi=cmmib10 at 9pt
\font\fivebi=cmmib7 at 6pt
\def\bi{\fam\bifam\tenbi}
\font\tendb=msbm10 at 12 pt
\font\sevendb=msbm7
\def\x{{\bi x}}
\begin{document}
\title{A Non-uniform Time-stepping Convex Splitting Scheme for  the Time-fractional  Cahn-Hilliard Equation}
\author{Jun Zhang \thanks{Computational   Mathematics  Research  Center, Guizhou University of Finance and Economics, Guiyang, Guizhou 550025,  China; Department of Mathematics, Guizhou University, Guiyang, Guizhou 550025, China; Email: jzhang@mail.gufe.edu.cn.}
\and Jia Zhao \thanks{Department of Mathematics and Statistics, Utah State University, Logan, UT, 84322, USA; Email: jia.zhao@usu.edu.}
\and JinRong Wang \thanks{Department of Mathematics, Guizhou University, Guiyang, Guizhou 550025, China; School of Mathematical Sciences, Qufu Normal University, Qufu 273165, Shandong, China; Email: jrwang@gzu.edu.cn.}}
\date{\today}
\maketitle

\begin{abstract}
In this paper, a non-uniform time-stepping convex-splitting numerical algorithm for solving the widely used time-fractional Cahn-Hilliard equation is introduced. The proposed numerical scheme employs the $L1^+$ formula for discretizing the time-fractional derivative and a second-order convex-splitting technique to deal with the non-linear term semi-implicitly. Then the pseudospectral method is utilized for spatial discretization. As a result, the fully discrete scheme has several advantages: second-order accurate in time, spectrally accurate in space, uniquely solvable, mass preserving, and unconditionally energy stable. Rigorous proofs are given, along with several numerical results to verify the theoretical results, and to show the accuracy and effectiveness of the proposed scheme.  Also, some interesting phase separation dynamics of the time-fractional Cahn-Hilliard equation has been investigated.

\end{abstract}

\section{Introduction}
The Cahn-Hilliard (CH) equation was originally introduced to describe the process of coarsening dynamics of binary alloys. Ever since, a great deal of peer-reviewed papers have been published investigating different aspects of the Cahn-Hilliard  equation \cite{Blowey1991The,Blowey1992The,Tierra-1,Tierra-2,LiYibao-1,Cheng-1,LiJC}, as well as applying it in many fields. For a mixture with two component, use $\phi$ to label the components: $\phi=1$ to label one component, and $\phi=-1$ to label the other component. Then, the evolution dynamics could be modeled by the Cahn-Hilliard equation
\begin{equation}\label{eq:CH}
\left\{
\begin{array}{l}
\partial_t \phi  + M (-\Delta)(- \varepsilon^2 \Delta \phi+\phi^3-\phi)=0,\ \ \x \in \Omega\subset R^d, \ \ 0< t\leq T,\\
\phi(\x,0)=\phi_0(\x),
\end{array}
\right.
\end{equation}
with proper physically relevant boundary conditions.  Here $\Omega$ is the domain, $d=2,3$ is the spatial dimension, $M>0$ is the mobility parameter and $\varepsilon$ controls the length scale of transition regions. It could be shown that the equation \eqref{eq:CH} is a $H^{-1}$ gradient flow with respect to the free energy
\beq
E(\phi) = \int_\Omega  \Big[ \frac{\varepsilon^2}{2}|\nabla \phi|^2 + \frac{1}{4}(\phi^2-1)^2 \Big] d\x.
\eeq
Some well-known properties include: the total mass of each component is conserved, which could be easily verified by realizing $$
\int_\Omega \phi(\x, t)d\x = \int_\Omega \phi(\x,0)d\x;
$$
and the free energy is non-increasing in time, which could be justified by noticing
\beq
\frac{dE}{dt} = \int_\Omega \frac{\delta E}{\delta \phi} \frac{\delta \phi}{\delta t} d\x = - \int_\Omega M  \Big| \nabla (- \varepsilon^2 \Delta \phi+\phi^3-\phi) \Big|^2 d\x ,
\eeq
given the boundary integral terms vanishes.

Nowadays, the Cahn-Hilliard model has emerged as a classical mathematical physics model in various applications.
%, including multiphase fluid flows, polymer physics, soft matter. %\cite{Lowengrub1998Quasi, Chen98, ChenL2002, EgglestonJ1, Du2006Simulating, Forest2012LCP}.
However, solving the Cahn-Hilliard equation is non-trivial, given the stiffness introduced by $\varepsilon$ and the nonlinearity in the equation. Many generalized numerical algorithms have been developed to overcome such difficulties, which include the convex splitting method \cite{Elliott1993The, Eyre1998, hu2009stable},
the linear stabilization method \cite{Zhu1999Ch, Zhao2017A,Tierra-1},
the Invariant Energy Quadratization (IEQ)  approach \cite{yang2017numerical}, and the
scalar auxiliary variable (SAV) method \cite{shen2018scalar}.
In addition, there have been extensive works of specific numerical schemes for the Cahn-Hilliard equation, such as second-order finite difference\cite{Sun1995A, Yan2018},  fourth-order finite difference scheme \cite{LiYibao-1}, pseudospectral scheme\cite{ChengK2016}, and mixed finite element method \cite{Feng2004, Diegel2016}.
Moreover, many convex splitting schemes \cite{WiseSINUMA2009,hu2009stable, Wang2011Wsie, Baskaran2013Wsie, ShenSecond2012, Zhen2014A, Chen2016wang, HanA2016} have been applied to various gradient flow models such as  phase-field crystal equation, the modified phase field
crystal equation, epitaxial thin film growth equation,  nonlocal Cahn-Hilliard model, and coupled system of phase field equations.

To manipulate the coarsening dynamics, researchers have proposed some extensions of the classical Cahn-Hilliard equation. For instance, the viscous Cahn-Hilliard equation is postulated by introducing the inertia effect into the dissipation dynamics. One other option of introducing a memory effect (inertia) is to take advantage of the fractional time derivative. Thus, the time-fractional Cahn-Hilliard equation has been introduced, which reads as
\begin{equation}
\left\{
\begin{array}{l}
\partial^{\alpha}_t \phi  + M (-\Delta)(- \varepsilon^2 \Delta \phi+\phi^3-\phi)=0,\ \ \x \in \Omega\subset R^d, \ \ 0< t\leq T,\\
\phi(\x,0)=\phi_0(\x),
\end{array}
\right.
\end{equation}
where $d=2,3$, $\alpha \in (0, 1)$  is the time-fractional order, $M$ is the mobility parameter, and $\varepsilon$ is an artificial parameter controlling the interfacial thickness. Here $\partial^{\alpha}_t \phi$ denotes the classical Caputo derivative defined as
\beq
\partial_t^\alpha \phi(\bx, t) = \frac{1}{\Gamma(1-\alpha)} \int_0^t \frac{\partial \phi(\bx, s)}{\partial s} \frac{ds}{(t-s)^\alpha}, \quad 0 < \alpha < 1.
\eeq
 The time-fractional models are known to maintain the memory effect of materials.

Though with its popularity, the time-fractional Cahn-Hilliard equation has not been derived physically, and its thermodynamic properties are not well understood. In particular, the classical gradient flow problem is known to be derived by energy variation, which makes the derived model satisfy the law of energy dissipation. Whether the fractional-order model also satisfy similar energy dissipation is still an open question.In the meanwhile, there are intensive research activities in understanding the time-fractional phase-field models or gradient flows, and their various anomalous coarsening dynamics in general. With many seminal work has been published, here we only emphasize some relevant research results. For a detailed reference list, please check the papers  \cite{ AM2017,ASMz2017,Liu2018,du2019time, tang2018energy, zhao2019power,chen2019accurate}, and references therein. For the time-fractional phase-field model, Tang et al. \cite{tang2018energy} obtained an energy dissipation property with an integral type.
They proved that the numerical method with the  $L1$ formula for discretizing the fractional time derivative satisfies the energy dissipation property under suitable conditions \cite{lin2007finite}. In our previous work, we considered a series of time-fractional phase-field models, and show numerically that the time-fractional phase-field model follows a scaling law in the coarsening process \cite{zhao2019power,chen2019accurate, Liu2018}.
Furthermore, we observed a linear proportional relationship between the decay rate of energy and fractional derivative $\alpha$, which is in agreement with their counterparts (the integer phase-field models) \cite{Dai2016Computational, Zhu1999Ch}.  This is an exciting phenomenon by revealing the hidden connections between integer and fractional phase-field models. And it sheds light on how the scaling law is implemented.
Liao et al.  constructed a  novel $L1^+$ formula coupled with  IEQ/SAV method to approximate a time-fractional molecular beam epitaxial growth models \cite{ji2019adaptive} and the Allen-Cahn equation \cite{ji2019adaptive-2}.
They proved that the numerical method is energy stable under a discrete integral summation.

For solving phase-field models, adaptivity in time is essential to save computational resources dramatically \cite{QiaoCH}.
When the time mesh is uniform, Tang et al. \cite{tang2018energy} propose a stable time-discrete scheme using the classical $L1$ formula for the fractional time derivative. Their algorithm can't be applied to take into account the weak singularity
of the initial state at $t=0$, and it is not proper to assume that the solution is smooth in the entire closed domain.
For non-smooth initial values, Jin et al. \cite{jin2015analysis} prove that the $L1$ scheme could not achieve $2-\alpha$ order accuracy. Thus, when the time mesh is non-uniform, the $L1$ formula won't guarantee the energy stability anymore.
The main goal of this work is to develop an efficient numerical scheme for the time-fractional Cahn-Hilliard model with non-uniform time steps, which can handle initial singularity and obeys the energy inequality, i.e., energy stable. Our numerical scheme is performed by  utilizing a $L1^+$ formula  \cite{ji2019adaptive} for time-fractional derivative  and a convex splitting technique \cite{WiseSINUMA2009} for non-linear energy function.
We show that our numerical method is uniquely solvable, unconditionally stable, and satisfies the property of energy dissipation.
Several numerical examples are proposed to verify that the numerical scheme can achieve a second-order accuracy in the time direction.
At last,  the coarsening dynamics have been studied.

The rest of the article is organized as follows. In Section 2, we will briefly introduce the time-fractional Cahn-Hilliard equation. Then the $L1^+$ formula and the convex splitting scheme are studied in detail. Some properties of the newly proposed schemes will be introduced, along with detailed proofs. In Section 3, several numerical experiments are performed to demonstrate the effectiveness of the numerical methods. The conclusion of this article is given in the last section.

\section{A Non-Uniform Time Stepping Numerical Algorithm}

For better explanation, we first introduce some notations. Let $(\cdot,\cdot)$ be the $L^2$ inner product. Define for $m\geq 0$
\begin{equation}\label{eq11}
H^{-m}(\Omega)=(H^{-m}(\Omega))^*,\ \ H^{-m}_0(\Omega)=\{\varphi\in  H^{-m}(\Omega)| ( \varphi, 1)_{m}=0 \},
\end{equation}
here  $( \varphi, 1)_{m}$ be the dual product between $H^{m}(\Omega)$ and  $H^{-m}(\Omega)$.
For $u \in L^2_0(\Omega)$, denote $-\Delta^{-1}u=\varphi\in H^1(\Omega)\cap L^2_0(\Omega)$, here $\varphi$ be the solution of
\begin{eqnarray}\label{eq12}
&-\Delta \varphi=u,\ \  \hbox{in} \ \ \Omega,\\  \label{eq13}
&(i)~ \varphi~  \hbox{is periodic ; or} ~(ii) ~ \frac{\partial \varphi}{\partial n}=0, \ \ \hbox{on}\ \ \partial\Omega.
\end{eqnarray}
Since the CH equation  is an $H^{-1}$ gradient flow, we define inner product and norm as
\begin{equation}\label{eq14}
(f,  (-\Delta)^{-1}g)= \Big((-\Delta)^{-\frac{1}{2}} f,  (-\Delta)^{-\frac{1}{2}}g\Big), \ \  \| f\|_{-1}:=\| (-\Delta)^{-\frac{1}{2}} f \|.
\end{equation}

\subsection{Time-fractional Cahn-Hilliard equation}
In this paper we focus on the time-fractional Cahn-Hilliard (TFCH) equation as follows
\begin{equation} \label{eq:TFCH}
\left\{
\begin{array}{l}
\partial^{\alpha}_t \phi + M (-\Delta)(- \varepsilon^2 \Delta \phi+\phi^3-\phi)=0,\ \ \x \in \Omega \ \ 0< t\leq T,\\
\phi(\x,0)=\phi_0(\x),
\end{array}
\right.
\end{equation}
with periodic boundary condition. Here we assume $\Omega \subset R^d$, is a smooth domain, with boundary $\partial \Omega$, and $d=2,3$. $\alpha \in (0, 1)$  is the time-fractional order, $M$ is the mobility parameter, and $\varepsilon$ is an artificial parameter controlling the interfacial thickness. Here $\partial^{\alpha}_t \phi$ denotes the classical Caputo derivative
\begin{equation}\label{eq5}
\partial^{\alpha}_t\phi(\x, t) := \Big( \mathcal{I}_t^{1-\alpha} \partial_t \phi(\x, t) \Big)(t) = \frac{1}{\Gamma(1-\alpha)} \int^t_0  \frac{ \partial_s \phi(\x, s)}{(t-s)^\alpha}\hbox{d}s,
\end{equation}
where $\mathcal{I}_t^{\beta}$,$\beta>0$ is the Riemann-Liouville fractional integration operator defined as
\beq
(\mathcal{I}_t^\beta \phi)(t) = \int_0^t \frac{1}{\Gamma(\beta)} \frac{\phi(\bx, s)}{t^{1-\beta}}  ds,
\eeq
and $\Gamma(\bullet)$ denotes the $\Gamma$-function.

For simplicity, in the rest of this paper, we will assume periodic boundary conditions for the time-fractional Cahn-Hilliard model \eqref{eq:TFCH}. Notice the proposed scheme, along with its properties, also holds for homogeneous Neumann boundary conditions.
It could be verified that the time-fractional Cahn-Hilliard model \eqref{eq:TFCH} has two essential properties.

\begin{prop}[Mass Conservation]
The time-fractional Cahn-Hilliard equation in \eqref{eq:TFCH} preserves the total mass, in the sense of
\beq \label{eq:Mass-Conservation}
\int_\Omega \phi(\x,t) d\x = \int_\Omega \phi(\x,0)d\x.
\eeq
\end{prop}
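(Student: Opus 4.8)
The plan is to reduce \eqref{eq:Mass-Conservation} to a scalar statement about the total mass $m(t) := \int_\Omega \phi(\x,t)\,d\x$ and then use the fact that a function whose Caputo derivative vanishes must be constant. First I would integrate the equation \eqref{eq:TFCH} over $\Omega$. Since the Caputo derivative in \eqref{eq5} acts only in the time variable (it is $\mathcal{I}_t^{1-\alpha}$ applied to $\partial_t\phi$), Fubini's theorem allows interchanging $\int_\Omega \cdot\,d\x$ with $\partial_t^\alpha$, yielding
$$
\partial_t^\alpha m(t) + M\int_\Omega (-\Delta)\mu\,d\x = 0, \qquad \mu := -\varepsilon^2\Delta\phi + \phi^3 - \phi .
$$

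Next I would eliminate the second term. By the divergence theorem, $\int_\Omega (-\Delta)\mu\,d\x = -\int_{\partial\Omega}\frac{\partial\mu}{\partial n}\,ds$, and this vanishes because the contributions from opposite faces of the periodic box cancel (in the homogeneous Neumann case the integrand itself vanishes). Hence $\partial_t^\alpha m(t) = 0$ for all $t\in(0,T]$.

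Finally I would invert the fractional derivative. Applying the Riemann--Liouville integral $\mathcal{I}_t^{\alpha}$ to the identity $\partial_t^\alpha m = \mathcal{I}_t^{1-\alpha} m'$ and using the semigroup property $\mathcal{I}_t^{\alpha}\mathcal{I}_t^{1-\alpha} = \mathcal{I}_t^{1}$ together with the fundamental theorem of calculus gives $m(t) - m(0) = \big(\mathcal{I}_t^{1}m'\big)(t) = \mathcal{I}_t^{\alpha}(0) = 0$, which is exactly \eqref{eq:Mass-Conservation}. (Equivalently, one can simply invoke the standard fact that the kernel of the Caputo operator on absolutely continuous functions is precisely the constants, bypassing the explicit inversion.)

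The computations are all routine; the only points deserving care are the regularity hypotheses that legitimize the interchange of $\partial_t^\alpha$ with the spatial integral and the composition rule $\mathcal{I}_t^{\alpha}\mathcal{I}_t^{1-\alpha} = \mathcal{I}_t^{1}$, both of which follow from the smoothness assumed for the solution of \eqref{eq:TFCH}. I do not anticipate any genuine obstacle here.
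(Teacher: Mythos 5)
Your proposal is correct and follows essentially the same route as the paper: integrate the equation over $\Omega$, eliminate the Laplacian term via the periodic (or Neumann) boundary conditions, commute the spatial integral with the Caputo operator by Fubini, and deduce that the total mass is constant. Your explicit inversion via $\mathcal{I}_t^{\alpha}\mathcal{I}_t^{1-\alpha}=\mathcal{I}_t^{1}$ is in fact a cleaner justification of the last step, which the paper handles by directly asserting $\int_\Omega \partial_t\phi(\x,t)\,d\x=0$ from the vanishing of its fractional integral.
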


\begin{proof}
We will verify \eqref{eq:Mass-Conservation} by showing that the solution of \eqref{eq:TFCH} satisfies
$$
\frac{d}{dt} \int_\Omega \phi(\x,t)d\x = 0.
$$
As a matter of fact, this could be verified by noticing
\begin{eqnarray}
0  &=&   \int_\Omega \partial_t^\alpha \phi(\x,t)d\x  \nonumber \\
&= &\frac{1}{\Gamma(1-\alpha)} \int_\Omega \int_0^t \frac{\partial_t \phi(\x,s)}{(t-s)^\alpha}dsdx \nonumber \\
&= & \frac{1}{\Gamma(1-\alpha)} \int_0^t \frac{1}{(t-s)^\alpha}    \Big[  \int_\Omega \partial_t \phi(\x,s)dx \Big] ds, \nonumber%
\end{eqnarray}
Since the domain $\Omega$ is independent of time, we obtain
\beq
\frac{d }{dt} \int_\Omega \phi(\x,t)d\x = \int_\Omega \partial_t \phi(\x,t) d\x = 0.
\eeq
This completes the proof.
\end{proof}

\begin{prop}[Energy Bound] \label{prop:energy-bound}
It satisfies the following energy dissipation law
\begin{equation}\label{eq6}
E(\phi(\x, T))-E(\phi(\x,0))=-\int_{\Omega} A_{\alpha}(\nabla \psi, \nabla \psi ) \hbox{d} \x \leq 0, \quad \forall T \geq 0,
\end{equation}
where
\begin{equation}\label{eq7}
A_{\alpha}(f, g)=\frac{1}{\Gamma(1-\alpha)}\int^T_0 \int^t_0  \frac{f(s)g(s)}{(t-s)^\alpha} \hbox{d}s \hbox{d}t,
\end{equation}
and
$\psi=(-\Delta)^{-1}\phi_t$ is the solution of $- \Delta \psi=\phi_t$ .
Here, the effective free energy of \eqref{eq:TFCH} could be derived as
\begin{equation}\label{eq1}
E(\phi)=\int_{\Omega} \Big( \frac{\varepsilon^2}{2} |\nabla \phi|^2+ \frac{1}{4}(\phi^2-1)^2 \Big)\hbox{d}\x.
\end{equation}
\end{prop}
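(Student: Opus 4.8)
The plan is to differentiate the free energy along the flow, use the equation to trade the chemical potential for the Caputo derivative, and then recognise the resulting double time integral as a positive semidefinite quadratic form in $\nabla\psi$. First I would compute, with the periodic boundary conditions used to discard every boundary term,
\[
\frac{d}{dt}E(\phi(\cdot,t)) = \Big(\frac{\delta E}{\delta \phi},\,\partial_t\phi\Big) = \big(\mu,\phi_t\big),\qquad \mu:=-\varepsilon^2\Delta\phi+\phi^3-\phi,
\]
where the only non-routine identity is $(-\varepsilon^2\Delta\phi,\phi_t)=\varepsilon^2(\nabla\phi,\nabla\phi_t)=\tfrac{\varepsilon^2}{2}\tfrac{d}{dt}\|\nabla\phi\|^2$, which together with $(\phi^3-\phi,\phi_t)=\tfrac{d}{dt}\int_\Omega\tfrac14(\phi^2-1)^2\,d\x$ reproduces $\tfrac{d}{dt}E$.

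Next, from \eqref{eq:TFCH} one has $(-\Delta)\mu=-\tfrac1M\,\partial_t^{\alpha}\phi$, and since $\int_\Omega\phi_t\,d\x=0$ and $\partial_t^{\alpha}\phi\in L^2_0(\Omega)$ --- both of which appear already in the proof of the mass-conservation property --- the spatially constant ambiguity in $\mu$ is killed by pairing against $\phi_t$, so $(\mu,\phi_t)=-\tfrac1M\big((-\Delta)^{-1}\partial_t^{\alpha}\phi,\phi_t\big)$. Then I would insert the integral representation \eqref{eq5}, commute the time-independent operator $(-\Delta)^{-1}$ inside the convolution in $s$, write $\phi_t=-\Delta\psi$, and use the self-adjointness of $(-\Delta)^{-1}$ together with integration by parts in space to arrive at
\[
\frac{d}{dt}E(\phi(\cdot,t)) = -\frac{1}{M\,\Gamma(1-\alpha)}\int_\Omega\!\int_0^t\frac{\nabla\psi(\x,s)\cdot\nabla\psi(\x,t)}{(t-s)^\alpha}\,ds\,d\x .
\]
Integrating in $t$ over $[0,T]$ and applying Fubini turns the right-hand side into $-\tfrac1M\int_\Omega A_\alpha(\nabla\psi,\nabla\psi)\,d\x$ --- where in $A_\alpha(\nabla\psi,\nabla\psi)$ the first and second arguments are evaluated at the running times $s$ and $t$, respectively --- which is the stated identity \eqref{eq6} (modulo the normalisation of $M$).

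The one genuinely non-trivial point, and the step I expect to be the main obstacle, is the sign: one must show the quadratic form is non-negative, i.e.\ that for each fixed $\x$, with $g(t):=\nabla\psi(\x,t)$,
\[
A_\alpha(g,g)=\frac{1}{\Gamma(1-\alpha)}\int_0^T\!\!\int_0^t\frac{g(s)\cdot g(t)}{(t-s)^\alpha}\,ds\,dt=\frac{1}{2\,\Gamma(1-\alpha)}\int_0^T\!\!\int_0^T\frac{g(s)\cdot g(t)}{|t-s|^\alpha}\,ds\,dt\;\ge\;0 ,
\]
the middle equality being the symmetrisation $\int_0^T\!\int_0^t=\tfrac12\int_0^T\!\int_0^T$ of the (integrable) kernel. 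This is exactly where the hypothesis $\alpha\in(0,1)$ enters: extending $g$ by zero to $\R$, the last expression is $\tfrac1{2\Gamma(1-\alpha)}\langle g,\,|\cdot|^{-\alpha}*g\rangle$, and the Riesz kernel $|\cdot|^{-\alpha}$ is positive definite because its Fourier transform equals $c_\alpha|\xi|^{\alpha-1}$ with $c_\alpha>0$; alternatively one can reproduce the elementary kernel-positivity lemma that underpins the $L1/L1^+$ energy analyses, see \cite{tang2018energy}. Combining this with the preceding identity gives $E(\phi(\cdot,T))-E(\phi(\cdot,0))\le 0$.

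Throughout, I would assume the solution is regular enough (e.g.\ $\phi\in L^2(0,T;H^1)\cap L^4(\Omega\times(0,T))$ and $\phi_t\in L^2(0,T;H^{-1})$) to justify interchanging differentiation, integration, $(-\Delta)^{-1}$ and the space/time integrals; note that the initial-layer singularity $\phi_t\sim t^{\alpha-1}$ that motivates the non-uniform mesh is still integrable against $(t-s)^{-\alpha}$ for $\alpha\in(0,1)$, so every integral above converges.
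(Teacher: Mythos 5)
The paper does not actually prove this property itself---it defers to \cite{tang2018energy}---and your argument is essentially the standard one from that reference: differentiate $E$ along the flow, use $(\mu,\phi_t)=-\frac{1}{M}\big((-\Delta)^{-1}\partial_t^\alpha\phi,\,\phi_t\big)$ (legitimate because $\partial_t^\alpha\phi$ and $\phi_t$ have zero spatial mean), rewrite everything in terms of $\nabla\psi$, and conclude from the positive semidefiniteness of the kernel $(t-s)^{-\alpha}$, which is the same continuous fact the paper later invokes as \eqref{eq19} from McLean's work in the discrete proof. Your proposal is correct, and your two side observations are also right: as written, \eqref{eq6} is missing the factor $1/M$ (or implicitly assumes $M=1$), and the integrand in \eqref{eq7} should be $f(s)g(t)$ rather than $f(s)g(s)$, since otherwise the claimed equality does not match the derivation.
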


The detailed proof for the property \ref{prop:energy-bound} could be found in \cite{tang2018energy}. We thus omit it for brevity.

\subsection{Time discretization}
In this section, we introduce the time discretization for the time-fractional Cahn-Hilliard model \eqref{eq:TFCH}. We closely follow the notations in \cite{ji2019adaptive}.

For given  $T>0$ and positive integer $N$, consider the non-uniform graded mesh $0=t_0< t_1< \cdots < t_n< \cdots <t_N=T$, with time step $\tau_n=t_{n}-t_{n-1}, 1\leq n \leq N$.
Given a sequence of  grid functions $\{\phi^n\}^N_{n=1}$, define
\begin{equation}\label{eq23}
\nabla_{\tau}\phi^n:=\phi^n-\phi^{n-1}, \ \ \partial_{\tau}\phi^{n-\frac{1}{2}}:=\nabla_{\tau}\phi^n/\tau_{n}, \ \ \phi^{n-\frac{1}{2}}:=(\phi^n+\phi^{n-1})/2, \ \ 1 \leq n \leq N.
\end{equation}
Denote
\beq  \label{eq:gamma}
\omega_\alpha (t) = \frac{1}{\Gamma(\alpha)} \frac{1}{t^{1-\alpha}},
\eeq
and let $\Pi_{1}\phi(t)$ be the linear interpolant of $\phi(t)$ between $t_{n-1}$ and $t_n$, that is
\begin{equation}\label{eq24}
(\Pi_1 \phi)(t):=\partial_{\tau} \phi^{n-\frac{1}{2}}, \ \ \forall t\in (t_{n-1}, t_n],\ \  1 \leq n \leq N.
\end{equation}

\begin{defi}[$L1$ Formula]
The $L1$ formula for Caputo derivative is defined as
\begin{equation}\label{eq25}
(\partial^\alpha_{\tau} \phi)^n:=\int^{t_n}_{t_0}\omega_{1-\alpha}(t_n-s)\phi'(s)ds=\sum\limits^n_{k=1}a^n_{n-k} \nabla_{\tau}\phi^k£¬
\end{equation}
where $a^n_{n-k}$'s are given as
\begin{equation}\label{eq26}
a^n_{n-k}:=\frac{1}{\tau_k} \int^{t_{k}}_{t_{k-1}}\omega_{1-\alpha}(t_n-s)ds, \ \ 1\leq k\leq n.
\end{equation}
\end{defi}

The $L1$ formula has several advantages. In particular, it could be easily seen that
The coefficient $a^n_{n-k}$ satisfies the following properties \cite{liao2018sharp,liao2019unconditional}
\begin{equation}\label{eq27}
a^n_{n-k}>0, \ \ \ \ a^n_{n-k-1}\geq a^n_{n-k}, \ \ \ \ 1\leq k \leq n-1.
\end{equation}
In particular, for uniform meshes, i.e., $t_n = \frac{n}{N}T$,  we have
\begin{equation}\label{eq29}
a_{n-k}^n=\frac{1}{\tau^\alpha}\Big[ \omega_{2-\alpha}(n-k+1)-\omega_{2-\alpha}(n-k)\Big],\ \ \ \   1 \leq k \leq n.
\end{equation}
The coefficients in \eqref{eq29} also satisfy the follow inequality.
\begin{prop}[Discrete Convolution Formula]
For any real sequence $\{\phi^i \}_{i=1}^n$, it holds
\begin{equation}\label{eq28}
\sum\limits^n_{k=1}   \sum\limits^k_{j=1}a_{k-j}^n \phi^k\phi^j \geq0.
\end{equation}
\end{prop}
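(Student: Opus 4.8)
Since the identity \eqref{eq29} shows that on a uniform mesh the coefficient in \eqref{eq28} depends only on the index gap, write $a_m:=\tau^{-\alpha}\big[\omega_{2-\alpha}(m+1)-\omega_{2-\alpha}(m)\big]$ for $m\ge0$ and set $S:=\sum_{k=1}^n\sum_{j=1}^k a_{k-j}\phi^k\phi^j$, the left--hand side of \eqref{eq28}. The first step is to symmetrize: isolating the diagonal term $j=k$ from the strictly lower part and relabelling indices yields
\[
2S=\sum_{i,j=1}^{n}a_{|i-j|}\,\phi^i\phi^j\;+\;a_0\sum_{k=1}^{n}(\phi^k)^2 .
\]
Because $a_0=\tau^{-\alpha}/\Gamma(2-\alpha)>0$, the claim reduces to showing that the symmetric Toeplitz matrix $\bA:=\big(a_{|i-j|}\big)_{1\le i,j\le n}$ is positive semidefinite.

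\textbf{The matrix $\bA$ is positive semidefinite.} I would first observe that $a_m=\tau^{-\alpha}\int_m^{m+1}\omega_{1-\alpha}(s)\,\mathrm{d}s$, i.e.\ $a_m$ is the running integral over unit intervals of $\omega_{1-\alpha}(s)=s^{-\alpha}/\Gamma(1-\alpha)$, which for $\alpha\in(0,1)$ is positive, strictly decreasing and convex on $(0,\infty)$; hence $(a_m)$ inherits the same three properties, namely $a_m>0$, $a_m-a_{m+1}>0$ (already \eqref{eq27}) and $a_m-2a_{m+1}+a_{m+2}\ge0$. Summing by parts twice over $0\le m\le n-1$ then gives the decomposition
\[
a_m=a_{n-1}+(a_{n-2}-a_{n-1})(n-1-m)+\sum_{r=0}^{n-3}\big(a_r-2a_{r+1}+a_{r+2}\big)(r+1-m)_{+},
\]
all of whose prefactors are $\ge0$ by the previous sentence. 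Thus $\bA$ is a nonnegative combination of the all-ones matrix and of the ``tent'' Toeplitz matrices $\big((L-|i-j|)_{+}\big)_{i,j}$, $1\le L\le n-1$. Each of these is positive semidefinite: the all-ones matrix trivially, and a tent matrix because a tent sequence is the autocorrelation of an interval indicator, $(L-|i-j|)_{+}=\sum_{c\in\mathbb{Z}}\mathbbm{1}\{c\le i\le c+L-1\}\,\mathbbm{1}\{c\le j\le c+L-1\}$, so it equals $\sum_c \bv_c\bv_c^{\!\top}$ for finitely many $0$--$1$ vectors $\bv_c\in\mathbb{R}^n$ (the discrete Fej\'er kernel). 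Hence $\bA\succeq0$, and together with the identity for $2S$ this proves \eqref{eq28}. Alternatively one may invoke the classical statement that a nonnegative, nonincreasing, convex sequence generates a positive semidefinite Toeplitz matrix, or rewrite $S=\sum_{k=1}^n\nabla_\tau v^k\,(\partial_\tau^\alpha v)^k$ with $v^k:=\sum_{i\le k}\phi^i$ and cite the positive semidefiniteness of the $L1$ kernel \cite{liao2018sharp,liao2019unconditional}.

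\textbf{Where the difficulty lies.} The symmetrization identity and the monotonicity/convexity of $(a_m)$ are short, routine computations. The real content is $\bA\succeq0$; the subtle point there is to carry out the tent-plus-constant decomposition over the \emph{finite} index range --- which is precisely why the boundary contributions $a_{n-1}$ and $a_{n-2}-a_{n-1}$ appear --- and then to recognize each tent block as a Gram matrix. If one is willing to quote the corresponding classical Toeplitz theorem or the $L1$-kernel positivity lemma from the literature, the proof collapses to the symmetrization step alone.
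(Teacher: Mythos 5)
Your argument is correct, and it is genuinely different from what the paper does: the paper states this property without proof (deferring to the literature, e.g.\ the works of Liao et al.\ and Tang et al.), and when it does need a positivity result of this type — for the $L1^{+}$ kernels in the energy-stability theorem — it argues through the \emph{continuous} positive semidefiniteness of the weakly singular kernel $\omega_{1-\alpha}$ (the integral inequality \eqref{eq19} of McLean type), interpreting the discrete quadratic form as $\mathcal{I}^1_t\big(w\,\mathcal{I}^{1-\alpha}_t w\big)$ evaluated along a piecewise interpolant. You instead give a self-contained, purely discrete linear-algebra proof: the symmetrization identity $2S=\sum_{i,j}a_{|i-j|}\phi^i\phi^j+a_0\sum_k(\phi^k)^2$ is correct, the identification $a_m=\tau^{-\alpha}\int_m^{m+1}\omega_{1-\alpha}(s)\,ds$ does yield positivity, monotonicity and convexity of $(a_m)$, your double summation-by-parts decomposition of $a_m$ into a constant plus nonnegative multiples of tent sequences checks out (including the boundary coefficients $a_{n-1}$ and $a_{n-2}-a_{n-1}$), and each tent Toeplitz block is indeed a Gram matrix of window indicators, so $\bA\succeq 0$ and \eqref{eq28} follows. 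What each route buys: yours is elementary, avoids any appeal to continuous kernel theory, and in fact proves the stronger statement $\bA\succeq0$; the paper's kernel-based route is less elementary but is the one that survives the passage to non-uniform meshes (where the coefficients lose their Toeplitz structure and your convexity/Fejér decomposition no longer applies), which is exactly why the paper switches to the $L1^{+}$ formula and the $\mathcal{I}^1_t$-argument for the main stability proof. Only cosmetic caveats: your decomposition implicitly assumes $n\ge 2$ (the case $n=1$ is trivial and worth one line), and strictly you prove more than needed, since $2S\ge0$ already follows from $\bA+a_0 I\succeq0$.
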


The discrete convolution formula has been utilized in \cite{tang2018energy} to prove the energy dissipation property.
However, it seems difficult to obtain similar semi-positive definite properties for non-uniform time grids. It turns out the $L1^{+}$ formula introduced in \cite{ji2019adaptive} will overcome such difficulties.
\begin{defi}[$L1^{+}$ Formula ( see \cite{ji2019adaptive})]
The  $L1^+$ formula of the  Caputo derivative  at $t^{n-\frac{1}{2}}$ is given as
\begin{eqnarray}
(\partial^\alpha_{\tau} \phi)^{n-\frac{1}{2}}&:=&\frac{1}{\tau_n} \int^{t_n}_{t_{n-1}} \int^t_0 \omega_{1-\alpha}(t-s) (\Pi_1 \phi)'(s)\hbox{d} s \hbox{d}t  \nonumber \\
&=& \sum\limits^n_{k=1} \overline{a}^{n}_{n-k} \nabla_{\tau} \phi^k, \ \   n \geq 1, \label{eq30}
\end{eqnarray}
where $\overline{a}^{n}_{n-k}$ are  defined by
\begin{equation}\label{eq31}
\overline{a}^{n}_{n-k}:=\frac{1}{\tau_k \tau_n} \int^{t_n}_{t_{n-1}} \int^{ \min\{t, t_k\}}_{t_{k-1}}\omega_{1-\alpha}(t-s) \hbox{d}s \hbox{d}t, \ \ 1\leq k \leq n.
\end{equation}
\end{defi}
Obviously, the discrete convolution kernels  $\overline{a}^{n}_{n-k}$ is positive. In fact, they have many good properties. See \cite{ji2019adaptive} for details. Here we only emphasis the following one.
\begin{prop}[Discrete Convolution Formula]
For any real sequence $\{\phi^i \}_{i=1}^n$, it holds
\begin{equation}\
\sum\limits^n_{k=1}   \sum\limits^k_{j=1}\overline{a}^k_{k-j}\phi^k \phi^j \geq 0.
\end{equation}
\end{prop}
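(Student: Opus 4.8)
The plan is to pass from the discrete triangular quadratic form to a continuous double integral, using the $\tau_n$-averaged integral representation built into the definition of the $L1^{+}$ weights, and then to invoke the positive semidefiniteness of the weakly singular kernel $\omega_{1-\alpha}$ on $L^{2}$.

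First I would reduce to a cumulative setting. Given the real sequence $\{\phi^{j}\}_{j=1}^{n}$, set $u^{0}:=0$ and $u^{j}:=\sum_{i=1}^{j}\phi^{i}$ for $1\le j\le n$, so that $\phi^{j}=\nabla_{\tau}u^{j}$. Applying \eqref{eq30} with $\phi$ replaced by $u$ and $n$ replaced by $k$, for every $1\le k\le n$ both lines of that identity give
\[
\sum_{j=1}^{k}\overline{a}^{\,k}_{k-j}\phi^{j}
=\sum_{j=1}^{k}\overline{a}^{\,k}_{k-j}\nabla_{\tau}u^{j}
=(\partial^{\alpha}_{\tau}u)^{k-\frac12}
=\frac{1}{\tau_{k}}\int_{t_{k-1}}^{t_{k}}\!\!\int_{0}^{t}\omega_{1-\alpha}(t-s)\,(\Pi_{1}u)'(s)\,ds\,dt ,
\]
where, by \eqref{eq23}--\eqref{eq24} and the choice of $u^{j}$, the function $(\Pi_{1}u)'$ is piecewise constant and equals $\partial_{\tau}u^{k-\frac12}=\phi^{k}/\tau_{k}$ on $(t_{k-1},t_{k}]$. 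Multiplying by $\phi^{k}$ and absorbing the constant $\phi^{k}/\tau_{k}$ into the outer $t$-integral turns each term into $\int_{t_{k-1}}^{t_{k}}(\Pi_{1}u)'(t)\int_{0}^{t}\omega_{1-\alpha}(t-s)\,(\Pi_{1}u)'(s)\,ds\,dt$; summing over $k=1,\dots,n$ telescopes the outer integral onto $[0,t_{n}]$, so that with $g:=(\Pi_{1}u)'$ the claimed quantity is exactly $\int_{0}^{t_{n}}g(t)\int_{0}^{t}\omega_{1-\alpha}(t-s)\,g(s)\,ds\,dt$.

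It then remains to show $\int_{0}^{T}g(t)\int_{0}^{t}\omega_{1-\alpha}(t-s)\,g(s)\,ds\,dt\ge0$ for every $g\in L^{2}(0,T)$, i.e.\ that $\omega_{1-\alpha}(|t-s|)=|t-s|^{-\alpha}/\Gamma(1-\alpha)$ is a positive semidefinite kernel. I would symmetrize the integral over $\{0\le s\le t\le T\}$ into $\tfrac12\int_{0}^{T}\!\int_{0}^{T}\omega_{1-\alpha}(|t-s|)\,g(t)g(s)\,ds\,dt$, and use the Bernstein representation $r^{-\alpha}=\frac{1}{\Gamma(\alpha)}\int_{0}^{\infty}\lambda^{\alpha-1}e^{-\lambda r}\,d\lambda$ ($r>0$, $0<\alpha<1$). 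Since $|t-s|^{-\alpha}$ is integrable on $[0,T]^{2}$ (as $\alpha<1$) and $g$ is bounded, Fubini rewrites the expression as $\frac{1}{2\Gamma(1-\alpha)\Gamma(\alpha)}\int_{0}^{\infty}\lambda^{\alpha-1}\big(\int_{0}^{T}\!\int_{0}^{T}e^{-\lambda|t-s|}g(t)g(s)\,ds\,dt\big)\,d\lambda$. For each $\lambda>0$ the function $x\mapsto e^{-\lambda|x|}$ is positive definite — its Fourier transform is $2\lambda/(\lambda^{2}+\xi^{2})\ge0$ — so $\int_{0}^{T}\!\int_{0}^{T}e^{-\lambda|t-s|}g(t)g(s)\,ds\,dt=\int_{-\infty}^{\infty}\frac{\lambda}{\pi(\lambda^{2}+\xi^{2})}\,\big|\int_{0}^{T}g(t)e^{\mathrm{i}\xi t}\,dt\big|^{2}d\xi\ge0$, and integrating these nonnegative slices against $\lambda^{\alpha-1}>0$ finishes the proof. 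Alternatively one may simply cite the known positive semidefiniteness of the Caputo convolution kernel used in \cite{ji2019adaptive}.

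The step I expect to be the main obstacle is the reduction itself: one must notice that the superscript $k$ in $\overline{a}^{\,k}_{k-j}$ coincides with the outer summation index, which is precisely what lets each inner sum be identified with the $L1^{+}$ operator $(\partial^{\alpha}_{\tau}u)^{k-\frac12}$ acting on the partial sums $u^{k}$; only then does the $\tau_{n}$-averaged integral form of $\overline{a}$ make $\sum_{k}\int_{t_{k-1}}^{t_{k}}$ collapse into a single integral over $[0,t_{n}]$. After that, the positive-definiteness of the power kernel is a routine Bernstein/Fourier argument; the only minor point to verify carefully is the applicability of Fubini near the integrable diagonal singularity $|t-s|^{-\alpha}$.
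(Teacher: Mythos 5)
Your proposal is correct and follows essentially the same route as the paper: both pass from the triangular quadratic form (applied to the difference/partial-sum sequence) to the continuous quadratic form $\int_0^{t_n}(\Pi_1 u)'(t)\int_0^t\omega_{1-\alpha}(t-s)\,(\Pi_1 u)'(s)\,ds\,dt$ via the integral representation of the $L1^{+}$ weights, and then conclude from the positive semidefiniteness of the weakly singular kernel $\omega_{1-\alpha}$. The only difference is that the paper cites this kernel property (from McLean's results and the $L1^{+}$ reference), whereas you prove it yourself via the Bernstein representation of $r^{-\alpha}$ and the nonnegative Fourier transform of $e^{-\lambda|x|}$, which is a valid self-contained substitute.
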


For the nonlinear terms, we utilize a convex splitting strategy \cite{WiseSINUMA2009} to introduce an explicit-implicit temporal discretization. Overall, the semi-discrete scheme in time is proposed as
\begin{scheme}[Non-uniform time marching scheme] \label{scheme:semi}
Set $\phi^{-1}=\phi^0$. For given  $T>0$ and positive integer $N$, consider the non-uniform graded mesh $0=t_0< t_1< \cdots < t_m< \cdots < t_N=T$, with time step $\tau_m=t_{m}-t_{m-1}, 1\leq m \leq N$. After we obtain $\phi^i$, $i \leq n-1$ with  $n \geq 1$, we can get $\phi^n$ via the following scheme
\beq  \label{eq36}
\left\{
\bea{l}
(\partial_\tau^\alpha \phi)^{n-\frac{1}{2}} = M \Delta \mu^{n-\frac{1}{2}}, \\
\mu^{n-\frac{1}{2}} = -\frac{\varepsilon^2}{2}  \Big( \Delta  \phi^{n-1}+ \Delta \phi^n \Big) + \frac{1}{4} \Big( (\phi^{n-1}+\phi^n) ((\phi^{n})^2 +(\phi^{n-1})^2)  \Big) -(\frac{3}{2}\phi^{n-1} - \frac{1}{2}\phi^{n-2}),
\eea
\right.
\eeq
where the formula for the temporal fractional derivative is given in \eqref{eq30}.
\addtocounter{scheme}{-1}
\end{scheme}

%
%Integrating   \eqref{eq16} from $t=t_{n-1}$ to $t_n$, leads to
%\begin{equation}\label{eq35}
%\left\{\begin{array}{l}
%\frac{1}{\tau_{n}} \int^{t_n}_{t_{n-1}} \partial^\alpha_t  \phi \hbox{d}t= \frac{1}{\tau_{n}} \int^{t_n}_{t_{n-1}}  \Delta \mu \hbox{d}t ,\\
%\frac{1}{\tau_{n}} \int^{t_n}_{t_{n-1}} w \hbox{d}t=\frac{1}{\tau_{n}} \int^{t_n}_{t_{n-1}}  - \Delta \phi +\gamma \phi+ g(\phi) q \hbox{d}t,\\
%\frac{1}{\tau_{n}} \int^{t_n}_{t_{n-1}} q_t \hbox{d}t=\frac{1}{\tau_{n}} \int^{t_n}_{t_{n-1}} \frac{1}{2} g(\phi)  \phi_t \hbox{d}t,
%\end{array}
%\right.
%\end{equation}
%Using trapezoidal formula and $L1^+$ formula, we  obtain  the time discrete scheme as follows
%\begin{equation}\label{eq36}
%\left\{\begin{array}{l}
%(\partial^\alpha_{\tau} \phi)^{n-\frac{1}{2}} =  \Delta \mu^{n-\frac{1}{2}} ,\\
%w^{n-\frac{1}{2}}=  - \Delta \phi^{n-\frac{1}{2}} +\gamma \phi^{n-\frac{1}{2}}+ g(\phi^\star) q^{n-\frac{1}{2}},\\
%\partial_{\tau} q^{n-\frac{1}{2}}= \frac{1}{2} g(\phi^\star)\partial_{\tau} \phi^{n-\frac{1}{2}},
%\end{array}
%\right.
%\end{equation}
%where $\phi^\star:=\phi^{n-1}+\nabla_\tau  \phi^{n-1}/2\rho_{n-1}$ be the   extrapolation.

\begin{remark}
It is worth mentioning that the above time-discrete scheme \eqref{eq36} works for any non-uniform grid.
Therefore, the solution singularity near the initial time can be effectively handled by proposing proper non-uniform time meshes or smaller meshes in general. Also, in a certain time regime when dynamics evolve slow, larger time steps could be used to reduce the computational time significantly.  Note that, by using $\phi^{-1}=\phi^0$, the local truncation error for the initial step is second-order, which implies that the overall method is globally second-order accurate in time. Meanwhile, numerical tests will verify that the time-discrete scheme can achieve second-order accuracy for certain examples.
\end{remark}

{\color{blue} 
The numerical scheme \eqref{eq36} and the full discrete scheme \eqref{eq:full-discrete-scheme} are second-order accurate for $\phi^n$ when $\tau_n = \tau_{n-1}$, and first-order accurate for $\phi^n$ when $\tau_n \neq \tau_{n-1}$. As an improvement, we can easily achieve the second-order accuracy of the non-uniform time marching scheme by replacing
$-(\frac{3}{2}\phi^{n-1} - \frac{1}{2}\phi^{n-2})$  with $-(\phi^{n-1}+\nabla_\tau \phi^{n-1}/2 \rho_{n-1})$ in equation \eqref{eq36}. Therefore, the improved scheme reads as below. %A minor revision of the scheme in \eqref{eq36} is needed. 

\begin{scheme}[Non-uniform time marching scheme] %\tag{\ref{scheme:semi}}
Set $\phi^{-1}=\phi^0$. For given  $T>0$ and positive integer $N$, consider the non-uniform graded mesh $0=t_0< t_1< \cdots < t_m< \cdots < t_N=T$, with time step $\tau_m=t_{m}-t_{m-1}, 1\leq m \leq N$. After we obtain $\phi^i$, $i \leq n-1$ with  $n \geq 1$, we can get $\phi^n$ via the following scheme
\begin{equation}
\left\{
\bea{l}
(\partial_\tau^\alpha \phi)^{n-\frac{1}{2}} = M \Delta \mu^{n-\frac{1}{2}}, \\
\mu^{n-\frac{1}{2}} = -\frac{\varepsilon^2}{2}  \Big( \Delta  \phi^{n-1}+ \Delta \phi^n \Big) + \frac{1}{4} \Big( (\phi^{n-1}+\phi^n) ((\phi^{n})^2 +(\phi^{n-1})^2)  \Big) -(\phi^{n-1}+\nabla_\tau \phi^{n-1}/2 \rho_{n-1}),
\eea
\right.
\tag{\ref{eq36}}
\end{equation}
where the formula for the temporal fractional derivative is given in \eqref{eq30}. Here the local time-step ratio $\rho_{n}:=\tau_n/\tau_{n+1}$.
\end{scheme}

%\begin{thm}[Energy Stability]
%Given $\tau_{n} < 2 \tau_{n-1}$,  the time-discrete scheme  \eqref{eq36} is unconditionally energy stable, and it follows the energy dissipation law as
%\begin{equation*}
%E(\phi^n)\leq  E(\phi^0),\ \   1 \leq n \leq N.
%\end{equation*}
%\end{thm}

%corresponding fully discrete scheme \eqref{eq:full-discrete-scheme} requires a minor revision.

It is worth noting that such changes will not affect the subsequent proofs. The only difference for the proof of energy stability is an introduction of the time step ratio $\rho_{n-1}$ restriction, that is $\rho_{n-1}\geq 0.5$, i.e. $\frac{\tau_{n}}{\tau_{n-1}} \leq  2$. And the only difference in the proof of the energy stability is in \eqref{eq:explicit-treatment}. 

As a matter of fact, taking the inner product of the second equation in the scheme with  $\nabla_\tau \phi^n$, we obtain
\begin{align*}
(\mu^{n-\frac{1}{2}},\nabla_\tau \phi^n)=\frac{\varepsilon^2}{2}(\|\nabla \phi^n\|^2-\|\nabla \phi^{n-1}\|^2)+\frac{1}{4}(\| \phi^n\|^4- \|\phi^{n-1} \|^4)-\Big(\phi^{n-1}+\nabla_\tau \phi^{n-1}/2 \rho_{n-1}, \nabla_\tau \phi^n\Big).
\end{align*}
We denote  $\frac{1}{\rho_{n-1}}=1+a$, and use
the following  two identities
\begin{align*}
2a(a-b)=&a^2-b^2+(a-b)^2,\\
2(a-b)(b-c)=&(a-b)^2+(b-c)^2-(a-2b+c)^2.
\end{align*}
Given $\rho_{n-1} \geq \frac{1}{2}$, we have $|a|\leq 1$. Notice the fact 
\begin{align*}
-\Big(\phi^{n-1}+\nabla_\tau \phi^{n-1}/2 \rho_{n-1}, \nabla_\tau \phi^n\Big)=&-\Big(  (1+\frac{1}{2\rho_{n-1}})\phi^{n-1}-\frac{1}{2\rho_{n-1}}\phi^{n-2},  \phi^n-\phi^{n-1}\Big).
\end{align*}
 We can find
\begin{equation*}
\bea{l}
-\Big(\phi^{n-1}+\nabla_\tau \phi^{n-1}/2 \rho_{n-1}, \nabla_\tau \phi^n\Big) \\
=-\Big( (\frac{3}{2}+a)\phi^{n-1}-(\frac{1}{2}+a)\phi^{n-2},  \phi^n-\phi^{n-1}\Big)\\
=-\Big(\frac{3}{2}\phi^{n-1}-\frac{1}{2}\phi^{n-2},\phi^n-\phi^{n-1}\Big)-a\Big(\phi^{n-1}-\phi^{n-2}, \phi^n-\phi^{n-1}\Big)\\
=\frac{1}{2}\Big[\|\phi^{n-1}\|^2-\|\phi^{n}\|^2+\frac{1}{2}\|\phi^n-\phi^{n-1} \|^2\Big]\\
-\frac{1}{4}\Big[\|\phi^n-\phi^{n-1}\|^2 +\|\phi^{n-1}-\phi^{n-2}\|^2-\|\phi^n-2\phi^{n-1}+\phi^{n-2} \|^2\Big]-a\Big(\phi^{n-1}-\phi^{n-2}, \phi^n-\phi^{n-1}\Big)\\
\geq  \frac{1}{2}\| \phi^{n-1}\|^2-\frac{1}{2}\| \phi^{n}\|^2+\frac{1}{4} \|\phi^n-\phi^{n-1}\|^2- \frac{1}{4} \|\phi^{n-1}-\phi^{n-2}\|^2\\
+\frac{1}{4}\|\phi^n-2\phi^{n-1}+\phi^{n-2} \|^2-\frac{|a|}{4} \|\phi^n-2\phi^{n-1}+\phi^{n-2} \|^2\\
\geq -\frac{1}{2}\| \phi^{n}\|^2+\frac{1}{2}\| \phi^{n-1}\|^2+\frac{1}{4} \|\phi^n-\phi^{n-1}\|^2- \frac{1}{4} \|\phi^{n-1}-\phi^{n-2}\|^2.
\eea
\end{equation*}
%The newly revised scheme also satisfies the following existence and uniqueness theorem.
}

\subsection{Properties of the semi-discrete scheme}
For the proposed scheme \eqref{eq36}, it satisfies several properties. %Here we will give rigorous proofs.
First of all, it could be verified that
\begin{thm}[Existence and Uniqueness]
There exists a unique solution at each time step for the proposed scheme in \eqref{eq36}.
\end{thm}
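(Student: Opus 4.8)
The plan is to recast the one-step problem \eqref{eq36} as the Euler--Lagrange equation of a strictly convex functional on the affine space of admissible states, so that existence and uniqueness follow from the direct method in the calculus of variations. Fix $n\geq 1$ and regard $\phi^{0},\dots,\phi^{n-1}$ as known data. Eliminating $\mu^{n-\frac12}$ from the two equations of \eqref{eq36} and recalling that $(\partial^\alpha_\tau\phi)^{n-\frac12}=\sum_{k=1}^{n}\overline a^{\,n}_{n-k}\nabla_\tau\phi^k$, the scheme takes the form
\beq\label{eq:elliptic-onestep}
\overline a^{\,n}_{0}\,(-\Delta)^{-1}\phi^{n}
-\tfrac{M\varepsilon^2}{2}\Delta\phi^{n}
+\tfrac{M}{4}\big(\phi^{n}+\phi^{n-1}\big)\big((\phi^{n})^2+(\phi^{n-1})^2\big)
= g^{n-1},
\eeq
where $g^{n-1}$ collects all terms depending only on $\phi^{0},\dots,\phi^{n-1}$ (the explicit part $\tfrac32\phi^{n-1}-\tfrac12\phi^{n-2}$, the history of the convolution sum, and $\tfrac{M\varepsilon^2}{2}\Delta\phi^{n-1}$), and we have used that $\overline a^{\,n}_{0}>0$. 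Because the left-hand side integrates against $1$ to $\overline a^{\,n}_{0}(-\Delta)^{-1}\phi^n$ having zero mean together with the mass-conservation identity forced by the $(-\Delta)$ structure, we restrict to the affine manifold $V=\{\,\phi\in H^{1}(\Omega): \int_\Omega\phi\,d\x=\int_\Omega\phi^0\,d\x\,\}$, so that $(-\Delta)^{-1}$ of the relevant quantities is well defined.

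The key step is to exhibit \eqref{eq:elliptic-onestep} as $\nabla\mathcal J(\phi^n)=0$ for the functional
\beq\label{eq:J-functional}
\mathcal J(\phi)=\frac{\overline a^{\,n}_{0}}{2}\,\|\phi\|_{-1}^{2}
+\frac{M\varepsilon^2}{4}\|\nabla\phi\|^{2}
+\frac{M}{16}\int_\Omega\big(\phi^2+(\phi^{n-1})^2\big)^2 d\x
-(g^{n-1},\phi),
\eeq
defined for $\phi\in V$, where $\|\cdot\|_{-1}$ is the norm from \eqref{eq14}. One checks by a direct first-variation computation that the Gateaux derivative of the first three terms reproduces exactly the left-hand side of \eqref{eq:elliptic-onestep}; the convex-splitting construction is precisely what makes the nonlinear term $\tfrac{M}{16}\int(\phi^2+(\phi^{n-1})^2)^2$ convex in $\phi$ (it is a composition of the convex increasing map $t\mapsto t^2$ with $\phi^2+\mathrm{const}\geq 0$, integrated). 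The quadratic terms are strictly convex and coercive on $V$ thanks to $\overline a^{\,n}_{0}>0$ and the Poincaré inequality controlling $\|\nabla\phi\|$ on the zero-mean part; the nonlinear term is convex and nonnegative; and the linear term is bounded below by $-\|g^{n-1}\|_{-1}\|\phi\|_{-1}$ (after a harmless mean-value adjustment), which is absorbed by the quadratic growth. Hence $\mathcal J$ is strictly convex, weakly lower semicontinuous, and coercive on the closed convex set $V\subset H^1(\Omega)$, so it attains a unique minimizer, and that minimizer is the unique solution of \eqref{eq36} after recovering $\mu^{n-\frac12}$ from the first equation.

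The main obstacle, and the point requiring genuine care rather than a routine estimate, is the functional-analytic bookkeeping around $(-\Delta)^{-1}$: one must confirm that the residual data $g^{n-1}$ lies in the right dual space so that $\|g^{n-1}\|_{-1}$ is finite, and that working on the affine constraint set $V$ is consistent — i.e., that the Euler--Lagrange equation on $V$ has no Lagrange-multiplier term obstructing equivalence with \eqref{eq36}. This is handled by testing \eqref{eq36} against constants to see that mass is automatically preserved (so the constraint is not active in a nontrivial way) and by noting that $\Delta\phi^{n-1}$ and the history terms pair well with $H^1$. A secondary technical point is justifying that the nonlinear term is $C^1$ on $H^1(\Omega)$ with the claimed derivative, which in $d=2,3$ follows from the Sobolev embedding $H^1\hookrightarrow L^6$ so that $\phi^3\in L^2$ and the quartic functional is finite and differentiable. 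Once these are in place the argument is complete; alternatively one can bypass the variational formulation and argue directly via monotonicity — the operator in \eqref{eq:elliptic-onestep} is strictly monotone and continuous on $V$, hence bijective onto the appropriate range by the Browder--Minty theorem — but the convex-minimization route is the most transparent given the convex-splitting origin of the scheme.
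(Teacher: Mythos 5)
Your overall strategy --- recasting the single-step problem as the Euler--Lagrange equation of a strictly convex, coercive functional on the mean-constrained affine space and invoking the direct method --- is exactly the route the paper takes. However, the specific functional you wrote down does not do the job: the first variation of $\frac{M}{16}\int_\Omega\big(\phi^2+(\phi^{n-1})^2\big)^2\,d\x$ in the direction $v$ is $\frac{M}{4}\int_\Omega \phi\,\big(\phi^2+(\phi^{n-1})^2\big)\,v\,d\x$, whereas the scheme \eqref{eq36} requires the term $\frac{M}{4}\,(\phi^{n}+\phi^{n-1})\big((\phi^{n})^2+(\phi^{n-1})^2\big)$. The missing piece $\frac{M}{4}\,\phi^{n-1}(\phi^{n})^2$ depends on the unknown $\phi^{n}$ and therefore cannot be absorbed into your data term $g^{n-1}$, which you defined to contain only $\phi^{0},\dots,\phi^{n-1}$. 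Consequently the unique minimizer of your $\mathcal{J}$ solves a different equation than the scheme, and the sentence claiming that the Gateaux derivative ``reproduces exactly'' the reduced one-step equation is precisely where the argument breaks down; the convexity justification you give (composition of the increasing convex map $t\mapsto t^2$ with $\phi^2+\mathrm{const}$) only applies to this wrong functional.

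The repair is concrete but changes the convexity argument. The correct nonlinear potential is, up to terms linear in $\phi$,
\beq
\frac{M}{4}\int_\Omega\Big(\frac{\phi^4}{4}+\frac{\phi^3}{3}\,\phi^{n-1}+\frac{\phi^2}{2}\,(\phi^{n-1})^2+\phi\,(\phi^{n-1})^3\Big)\,d\x,
\eeq
whose pointwise derivative in $\phi$ is $\frac{M}{4}(\phi+\phi^{n-1})\big(\phi^2+(\phi^{n-1})^2\big)$; this is essentially the function $h_1$ used in the paper's own proof. Its convexity does not follow from your composition argument (the cubic cross term $\phi^3\phi^{n-1}/3$ is not of that form); instead one checks the pointwise second derivative $\frac{M}{4}\big(3\phi^2+2\phi\,\phi^{n-1}+(\phi^{n-1})^2\big)=\frac{M}{4}\big(2\phi^2+(\phi+\phi^{n-1})^2\big)\ge 0$. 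With this corrected potential, your coercivity estimate, the mean-value constraint bookkeeping for $(-\Delta)^{-1}$, and the $H^1\hookrightarrow L^6$ differentiability argument all go through unchanged, and your fallback Browder--Minty remark is also valid for the same completing-the-square reason (the nonlinear map is pointwise nondecreasing in $\phi$). As written, though, the central identification of the scheme with $\nabla\mathcal{J}(\phi^{n})=0$ is incorrect, so the proof as submitted does not establish existence and uniqueness for \eqref{eq36}.
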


\begin{proof}
The proof is based on a convexity argument.  In fact, we define the minimization problem
\beq \label{eq:convex-optimization}
\min_{\phi} G(\phi),
\eeq
where
\beq
G(\phi) = \frac{1}{4}(-\varepsilon \Delta \phi, \phi)_{L^2(\Omega)}^2 + ( h_1(\phi), 1)_{L^2(\Omega)}^2 + (h_2(\phi), 1)_{H^{-1}(\Omega)}^2,
\eeq
with $h_1(\phi)$ and $h_2(\phi)$ defined as
\beq
\bea{l}
h_1(\phi) = \frac{\phi^4}{4} + \frac{\phi^3}{3}\phi^{n-1} + \frac{\phi^2}{2} (\phi^{n-1})^2 + \phi (\phi^{n-1})^3  -  \phi ( \frac{\varepsilon^2}{2}\Delta \phi^{n-1} + \frac{3}{2}\phi^{n-1}-\frac{1}{2}\phi^{n-2} ), \\
h_2(\phi) =  \frac{1}{2} \overline{a}_0^n \phi^2 +  \Big( \sum_{k=2}^n \overline{a}^n_{n-k} \nabla_\tau \phi^k - \overline{a}_0^n \phi^{n-1} \Big) \phi.
\eea
\eeq
It could be easily verified that the target functional $G(\phi)$ is convex with respect to $\phi$ \cite{WiseSINUMA2009}. Then, there is a unique solution for \eqref{eq:convex-optimization}.

Notice the solution to \eqref{eq36} is equivalent to the solution minimizing the convex functional. Thus,  there exists a unique solution for \eqref{eq36}.
\end{proof}

\begin{thm}[Mass Conservation]
The time-discrete scheme \eqref{eq36} preserves the total mass, i.e.
\beq  \label{eq:mass-conservation}
\int_\Omega \phi^n d\x = \int_\Omega \phi^0 d\x, \qquad \forall n \geq 1.
\eeq
\end{thm}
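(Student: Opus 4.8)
The plan is to integrate the first equation of Scheme \eqref{eq36} over the spatial domain $\Omega$ and exploit that its right-hand side is a Laplacian, which has zero mean under periodic (or homogeneous Neumann) boundary conditions. This reduces the claim to a purely algebraic statement about the discrete fractional operator $(\partial_\tau^\alpha\,\cdot\,)^{n-\frac12}$, which I then unravel by induction on $n$.

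First I would integrate $(\partial_\tau^\alpha \phi)^{n-\frac12} = M\Delta \mu^{n-\frac12}$ over $\Omega$. By the divergence theorem together with the periodicity (or vanishing normal derivative) of $\mu^{n-\frac12}$, the right-hand side integrates to zero, so $\int_\Omega (\partial_\tau^\alpha\phi)^{n-\frac12}\,d\x = 0$. Next, using the representation in \eqref{eq30}, namely $(\partial_\tau^\alpha\phi)^{n-\frac12} = \sum_{k=1}^n \overline{a}^{\,n}_{n-k}\nabla_\tau\phi^k$ with $\nabla_\tau\phi^k = \phi^k-\phi^{k-1}$, and interchanging the finite sum with the integral, I obtain $\sum_{k=1}^n \overline{a}^{\,n}_{n-k}\big(\int_\Omega\phi^k\,d\x - \int_\Omega\phi^{k-1}\,d\x\big) = 0$.

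Then I argue by induction on $n$. For the base case $n=1$ the identity reads $\overline{a}^{\,1}_0\big(\int_\Omega\phi^1\,d\x - \int_\Omega\phi^0\,d\x\big)=0$; since the $L1^+$ kernel coefficient $\overline{a}^{\,1}_0>0$ (positivity recalled just after \eqref{eq31}), this forces $\int_\Omega\phi^1\,d\x = \int_\Omega\phi^0\,d\x$. For the inductive step, assuming $\int_\Omega\phi^j\,d\x = \int_\Omega\phi^0\,d\x$ for all $j\le n-1$, every term with $k\le n-1$ satisfies $\int_\Omega\nabla_\tau\phi^k\,d\x = 0$, so the sum collapses to $\overline{a}^{\,n}_0\big(\int_\Omega\phi^n\,d\x - \int_\Omega\phi^{n-1}\,d\x\big)=0$, and positivity of $\overline{a}^{\,n}_0$ again closes the induction.

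I do not anticipate a genuine obstacle: the only ingredients are the mean-zero property of $\Delta\mu^{n-\frac12}$ under the assumed boundary conditions and the strict positivity of the diagonal kernel coefficient $\overline{a}^{\,n}_0$, both already available. The one point to state carefully is that $\phi^{-1}=\phi^0$ makes the explicit term $\tfrac32\phi^{n-1}-\tfrac12\phi^{n-2}$ in $\mu^{n-\frac12}$ well defined at $n=1$; this is inconsequential for the argument, since that term—and indeed all of $\mu^{n-\frac12}$—disappears after the Laplacian is integrated.
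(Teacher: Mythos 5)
Your proposal is correct and follows essentially the same route as the paper: integrate the first equation of \eqref{eq36} over $\Omega$ so the term $M\Delta\mu^{n-\frac12}$ vanishes under the boundary conditions, then use the $L1^+$ representation \eqref{eq30} together with induction so the sum collapses to $\overline{a}^{\,n}_0\int_\Omega\nabla_\tau\phi^n\,d\x=0$, and conclude by positivity of $\overline{a}^{\,n}_0$. Your write-up is in fact slightly more explicit than the paper's (which leaves the collapse of the convolution sum and the role of $\overline{a}^{\,n}_0>0$ implicit), but the underlying argument is the same.
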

\begin{proof}
This could be proved by induction by following the similar idea as in \cite{ji2019adaptive-2}. First of all, it could be easily shown $\int_\Omega \phi^1 d\x = \int_\Omega \phi^0d\x$. Then, using the induction, assuming it holds $\int_\Omega \phi^k d\x = \int_\Omega \phi^0 d\x$, $\forall  k < n$, we have
\beq
(\overline{a}_0^n \nabla_{\tau} \phi^n, 1) = (  (\partial_{\tau}^\alpha \phi)^{n-\frac{1}{2}}, 1) = (M \nabla \mu^{n-\frac{1}{2}}, 1) = 0,
\eeq
i.e.
\beq
\int_\Omega \phi^n d\x = \int_\Omega \phi^{n-1}d\x = \int_\Omega \phi^0 d\x.
\eeq
Thus, the scheme \eqref{eq36} conserves the total mass. It completes the proof.
\end{proof}

\begin{thm}[Energy Stability]
The time-discrete scheme  \eqref{eq36} is unconditionally energy stable, and it follows the energy dissipation law as
\begin{equation}\label{eq37}
E(\phi^n)\leq  E(\phi^0),\ \   1 \leq n \leq N.
\end{equation}
\end{thm}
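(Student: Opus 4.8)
The plan is to test the two equations of Scheme~\ref{scheme:semi} against suitable multipliers in the $L^2$ inner product, sum over the time level, and telescope. Because the scheme conserves mass (previous theorem), $\int_\Omega\nabla_\tau\phi^n\,d\x=0$, so $\nabla_\tau\phi^n\in L^2_0(\Omega)$ and both $(-\Delta)^{-1}\nabla_\tau\phi^n$ and the $\|\cdot\|_{-1}$ pairing of \eqref{eq14} are well defined. First I would pair the first equation of \eqref{eq36} with $(-\Delta)^{-1}\nabla_\tau\phi^n$ in $L^2(\Omega)$. Using $\big(\Delta\mu^{n-\frac12},(-\Delta)^{-1}\nabla_\tau\phi^n\big)=-\big(\mu^{n-\frac12},\nabla_\tau\phi^n\big)$ and the representation $(\partial_\tau^\alpha\phi)^{n-\frac12}=\sum_{k=1}^n\overline a^n_{n-k}\nabla_\tau\phi^k$ of \eqref{eq30}, this becomes $\sum_{k=1}^n\overline a^n_{n-k}\big((-\Delta)^{-\frac12}\nabla_\tau\phi^k,(-\Delta)^{-\frac12}\nabla_\tau\phi^n\big)=-M\big(\mu^{n-\frac12},\nabla_\tau\phi^n\big)$. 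Summing over $n=1,\dots,m$ and applying the Discrete Convolution Formula for the $L1^+$ kernels pointwise to the real sequence $\{(-\Delta)^{-\frac12}\nabla_\tau\phi^k\}_{k=1}^m$ (then integrating over $\Omega$), the left-hand side is nonnegative, hence
\[
M\sum_{n=1}^m\big(\mu^{n-\frac12},\nabla_\tau\phi^n\big)\ \le\ 0 .
\]
This is precisely where the $L1^+$ formula, rather than the plain $L1$ formula, is indispensable: on a non-uniform mesh only the former has the required semi-positive-definite convolution structure.

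Next I would expand $\big(\mu^{n-\frac12},\nabla_\tau\phi^n\big)$ term by term from the second equation of \eqref{eq36}. The interfacial term gives exactly $\tfrac{\varepsilon^2}{2}\big(\|\nabla\phi^n\|^2-\|\nabla\phi^{n-1}\|^2\big)$ since $\big(-\tfrac{\varepsilon^2}{2}(\Delta\phi^{n-1}+\Delta\phi^n),\nabla_\tau\phi^n\big)=\tfrac{\varepsilon^2}{2}\big(\nabla\phi^n+\nabla\phi^{n-1},\nabla\phi^n-\nabla\phi^{n-1}\big)$; the implicitly treated quartic term telescopes exactly because pointwise $(\phi^{n-1}+\phi^n)\big((\phi^n)^2+(\phi^{n-1})^2\big)(\phi^n-\phi^{n-1})=(\phi^n)^4-(\phi^{n-1})^4$, contributing $\tfrac14\big(\|\phi^n\|_{L^4}^4-\|\phi^{n-1}\|_{L^4}^4\big)$ with $\|\varphi\|_{L^4}^4:=\int_\Omega\varphi^4\,d\x$. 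The delicate term is the explicitly treated concave part $-\big(\tfrac32\phi^{n-1}-\tfrac12\phi^{n-2},\nabla_\tau\phi^n\big)$: writing $\tfrac32\phi^{n-1}-\tfrac12\phi^{n-2}=\phi^{n-1}+\tfrac12(\phi^{n-1}-\phi^{n-2})$ and using the two algebraic identities $2a(a-b)=a^2-b^2+(a-b)^2$ and $2(a-b)(b-c)=(a-b)^2+(b-c)^2-(a-2b+c)^2$ with $a=\phi^n,\ b=\phi^{n-1},\ c=\phi^{n-2}$, then discarding the nonnegative $\tfrac14\|\phi^n-2\phi^{n-1}+\phi^{n-2}\|^2$, one obtains
\[
-\big(\tfrac32\phi^{n-1}-\tfrac12\phi^{n-2},\nabla_\tau\phi^n\big)\ \ge\ \tfrac12\|\phi^{n-1}\|^2-\tfrac12\|\phi^n\|^2+\tfrac14\|\phi^n-\phi^{n-1}\|^2-\tfrac14\|\phi^{n-1}-\phi^{n-2}\|^2 .
\]
(The same estimate covers the improved second-order variant of the blue remark once $\rho_{n-1}\ge\tfrac12$, since then $|a|\le1$.)

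Finally I would assemble the estimates. Writing $E(\phi)=\tfrac{\varepsilon^2}{2}\|\nabla\phi\|^2+\tfrac14\|\phi\|_{L^4}^4-\tfrac12\|\phi\|^2+\tfrac14|\Omega|$, the three contributions above combine into
\[
\big(\mu^{n-\frac12},\nabla_\tau\phi^n\big)\ \ge\ E(\phi^n)-E(\phi^{n-1})+\tfrac14\|\phi^n-\phi^{n-1}\|^2-\tfrac14\|\phi^{n-1}-\phi^{n-2}\|^2 .
\]
Summing over $n=1,\dots,m$, the energy differences telescope to $E(\phi^m)-E(\phi^0)$, the square-difference terms telescope to $\tfrac14\|\phi^m-\phi^{m-1}\|^2-\tfrac14\|\phi^0-\phi^{-1}\|^2$, and the initialization $\phi^{-1}=\phi^0$ removes the last term. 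Combining with the first step,
\[
0\ \ge\ M\sum_{n=1}^m\big(\mu^{n-\frac12},\nabla_\tau\phi^n\big)\ \ge\ M\Big(E(\phi^m)-E(\phi^0)+\tfrac14\|\phi^m-\phi^{m-1}\|^2\Big)\ \ge\ M\big(E(\phi^m)-E(\phi^0)\big),
\]
so that $E(\phi^m)\le E(\phi^0)$ for $1\le m\le N$, which is \eqref{eq37}. The step I expect to be the main obstacle is controlling the explicitly extrapolated concave term in the second paragraph: one must show that the spurious positive contributions it produces are absorbed by the extra numerical-dissipation term $\tfrac14\|\phi^n-\phi^{n-1}\|^2$ and a telescoping remainder, so that unconditional dissipativity survives the explicit convex splitting. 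Everything else is either an exact identity or a direct appeal to the convolution positivity of the $L1^+$ kernels.
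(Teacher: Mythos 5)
Your proposal is correct and follows essentially the same route as the paper's proof: testing the two equations with $(-\Delta)^{-1}\nabla_\tau\phi^n$ and $\nabla_\tau\phi^n$, invoking the semi-positive definiteness of the $L1^+$ convolution kernels, bounding the explicitly extrapolated concave term by the identical quarter-norm telescoping inequality, and using $\phi^{-1}=\phi^0$ to close the telescoped sum. The only cosmetic differences are that you cite the stated discrete convolution property directly (the paper re-derives it from the continuous kernel positivity via the piecewise-linear interpolant) and you carry the mobility $M>0$ explicitly, which the paper silently absorbs.
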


\begin{proof}
From \cite{mclean1996discretization,mclean2007second}, we know the weakly singular kernel $\omega_{1-\alpha}$ is positive semi-define, that is
\begin{eqnarray}\label{eq19}
\mathcal{I}^1_t(w \mathcal{I}^{1-\alpha}_t w)(t) &=&\int^t_0 w(\eta) \hbox{d} \eta \int^\eta_0 \omega_{1-\alpha}(\eta-s)w(s) \hbox{d}s \nonumber\\
&=&\frac{1}{2}\int^t_0 \int^t_0 \omega_{1-\alpha}(|\eta-s|)w(s) w(\eta)\hbox{d}\eta \hbox{d}s \nonumber \\
&\geq& 0.
\end{eqnarray}

Taking $w=\Pi_1 \phi$ in \eqref{eq19}, we know  the the non-uniform $L1^+$ formula
\begin{equation}\label{eq32}
(\partial^\alpha_{\tau} \phi)^{n-\frac{1}{2}} \approx  \frac{1}{\tau_n} \int^{t_n}_{t_{n-1}} (\partial^\alpha_t \phi)(t) \hbox{d}t.
\end{equation}
can ensure that the discrete convolution satisfies semi-positive characterization
\begin{eqnarray}\label{eq33}
\sum\limits^n_{k=1} \nabla_{\tau} \phi^k  (\partial^\alpha_{\tau} \phi)^{k-\frac{1}{2}} &=&
\int^t_{t_0} (\Pi_1 \phi)'(t)\int^t_0 \omega_{1-\alpha} (t-s)  (\Pi_1 \phi)'(s) \hbox{d}s \hbox{d}t  \nonumber\\
&=&\mathcal{I}^1_t\Big(  (\Pi_1 \phi)' \mathcal{I}^{1-\alpha}_t (\Pi_1 \phi)' \Big)(t_n) \nonumber \\
&\geq &0.
\end{eqnarray}
Combining with  \eqref{eq31} and \eqref{eq33},  for any real grid function  $\{w_k\}^n_{k=1}$, we find
\begin{equation}\label{eq34}
\sum\limits^n_{k=1}w_k \sum\limits^k_{j=1} \overline{a}^{n}_{n-k}w_j \geq 0.
\end{equation}

Taking the inner product of  first equation in \eqref{eq36} with  $-\Delta^{-1} \nabla_\tau \phi^{n}$,  we obtain
\begin{equation}\label{eq38}
\Big((\partial^\alpha_{\tau} \phi)^{n-\frac{1}{2}},  (-\Delta)^{-1} \nabla_\tau \phi^{n}\Big)=-(\mu^{n-\frac{1}{2}}, \nabla_\tau \phi^n).
\end{equation}
Taking the inner product of  second equation in \eqref{eq36} with  $\nabla_\tau \phi^n$,  we obtain
\begin{equation}\label{eq39}
(\mu^{n-\frac{1}{2}}, \nabla_\tau \phi^n)=\frac{\varepsilon^2}{2}(\| \nabla \phi^n\|^2-\| \nabla \phi^{n-1}\|^2)+ \frac{1}{4}(\|  \phi^n\|^4-\|  \phi^{n-1}\|^4)
- (\frac{3}{2}\phi^{n-1}-\frac{1}{2}\phi^{n-2}, \nabla_\tau \phi^n).
\end{equation}

Notice the fact
\beq \label{eq:explicit-treatment}
\bea{l}
-(\frac{3}{2}\phi^{n-1}-\frac{1}{2}\phi^{n-2},\nabla_\tau \phi^n) \\
= -\frac{1}{2} \| \phi^n \|^2 +  \frac{1}{2} \| \phi^{n-1}\|^2 + \frac{1}{2} \| \phi^n-\phi^{n-1} \|^2 - \frac{1}{2} (\phi^n - \phi^{n-1}, \phi^{n-1}- \phi^{n-2}) \\
\geq -\frac{1}{2} \| \phi^n \|^2 +  \frac{1}{2} \| \phi^{n-1}\|^2 + \frac{1}{2} \| \phi^n-\phi^{n-1}\|^2 - \frac{1}{4} \| \phi^n - \phi^{n-1}\|^2 -\frac{1}{4} \| \phi^{n-1}- \phi^{n-2}\|^2 \\
= -\frac{1}{2} \| \phi^n \|^2 +  \frac{1}{2} \| \phi^{n-1}\|^2 + \frac{1}{4} \| \phi^n-\phi^{n-1}\|^2 -\frac{1}{4} \| \phi^{n-1}- \phi^{n-2}\|^2. \\
\eea
\eeq

%Taking the inner product of  third equation in \eqref{eq36} with  2$ \tau_n q^{n-\frac{1}{2}}$,  we obtain
%\begin{equation}\label{eq40}
%\| q^n\|^2-\| q^{n-1}\|^2=( g(\phi^\star)\nabla_{\tau} \phi^{n},   q^{n-\frac{1}{2}}).
%\end{equation}

Summing the above equations up, we obtain
\begin{equation}\label{eq41}
E(\phi^k)-E(\phi^{k-1}) \leq - \Big( (-\Delta)^{-\frac{1}{2}}(\partial^\alpha_{\tau} \phi)^{k-\frac{1}{2}}, (-\Delta)^{-\frac{1}{2}} \nabla_\tau \phi^{k}\Big) - \frac{1}{4} \| \phi^k-\phi^{k-1}\|^2 + \frac{1}{4} \| \phi^{k-1}- \phi^{k-2}\|^2 ,
\end{equation}
for any $2 \leq k \leq n$.
Using the fact $\phi^0  =\phi^{-1}$, we have
\beq
E(\phi^1)-E(\phi^0) \leq  - \Big( (-\Delta)^{-\frac{1}{2}}(\partial^\alpha_{\tau} \phi)^{\frac{1}{2}}, (-\Delta)^{-\frac{1}{2}} \nabla_\tau \phi^{1}\Big) - \frac{1}{4} \| \phi^1-\phi^0\|^2.
\eeq

By summing up  for $k=1,\cdots n$,  and  applying  \eqref{eq19}, we have
\begin{eqnarray}\label{eq42}
E(\phi^n)-E(\phi^0) &\leq &- \int_{\Omega} \mathcal{I}^1_t\Big[(  (-\Delta)^{-\frac{1}{2}}\Pi_1 \phi)'\mathcal{I}^{1-\alpha}_t (  (-\Delta)^{-\frac{1}{2}}\Pi_1 \phi)' \Big](t^n) \hbox{d} \x  -\frac{1}{4} \| \phi^n-\phi^{n-1}\|^2 \nonumber\\
&\leq & 0.
\end{eqnarray}
It completes the proof.
\end{proof}

\begin{remark}
We obtained an energy stable numerical scheme for the time-fractional Cahn-Hilliard model. However, how to prove  $E(\phi^n)\leq  E(\phi^{n-1})$ for time-fractional gradient flow problem is still an open question. In addition, the error analysis of the numerical scheme  is meaningful and challenging work.
\end{remark}

\subsection{Spatial discretization}
For the spatial discretization, we use the Fourier Pseudo-spectral method.  To make this paper self-consistent, we introduce a few notations. For more details, interested readers can refer to our previous work \cite{Chen&Zhao&GongCICP2019,Gong&Zhao&WangACM}.

Consider a rectangular domain $\Omega = [0,L_x]\times[0,L_y]$ with $L_x$ and $L_y$ the lengths of each sides. We We partition the domain ? with uniform meshes, with mesh size $h_x=L_x/N_x, h_y=L_y/N_y$, where $N_x$ and $N_y$ be two positive even integers. Thus, the discrete domain is denoted as
 $$\Omega_{h} =
\left\{(x_{j},y_{k})|x_{j} = j h_x, y_{k} = kh_y,~0\leq j\leq
N_{x}-1,0\leq k\leq N_{y}-1\right\}.$$
Also, we introduce $V_{h} = \big\{u|u=\{u_{j,k}|(x_{j},y_{k})\in
\Omega_{h}\} \big\}$ as the space of grid functions on $\Omega_{h}$. 

%For any two matrix grid functions ${\bf F},{\bf G}$ $({\bf
%F},{\bf G}\in V_{h})$, define the discrete inner product and norm as follows
%\beq
%({\bf F},{\bf G})_{h} = \sum\limits_{j = 0}^{N_{x}-1}\sum\limits_{k = 0}^{N_{y}-1}
%({\bf F})_{j,k}({\bf G})_{j,k}h_x h_y,~~\|{\bf F}\|_{h} = \sqrt{ {\bf F},{\bf F}_{h}}.
%\eeq

%In order to derive the algorithm conveniently, we define discrete gradient operators and the discrete Laplace operator
%as follows
%\beq
%\nabla_h = \left(
%\begin{array}{c}
%\bD_1^x\textcircled{x}\\
%\bD_1^y\textcircled{y}
%\end{array}\right),
%\quad \Delta_h = \nabla_h\cdot\nabla_h = (\bD_1^x)^2\textcircled{x}+(\bD_1^y)^2\textcircled{y}.
%\eeq
%Applying the Fourier pseudospectral method in space to the semi-discrete scheme \eqref{eq36}, we obtain the following fully discrete scheme.

In order to derive the algorithm conveniently, we denote discrete gradient operator and the discrete Laplace operator
as $\nabla_h$ and $\Delta_h$. Applying the Fourier pseudospectral method in space to the semi-discrete scheme (2.24), we obtain the following fully discrete scheme

\begin{scheme}[Full Discrete Scheme]  \label{fScheme:FullDiscrete}
Give the initial condition $\phi^0 = \phi_0\in V_h$. Set $\phi^{-1}=\phi^0$.   After we obtained $\phi^i$, $i\leq n-1$, with $n \geq 1$, we can update $\phi^n \in V_h$ via
\beq \label{eq:full-discrete-scheme}
\left\{
\bea{l}
(\partial_\tau^\alpha \phi)^{n-\frac{1}{2}} = M \Delta_h \mu^{n-\frac{1}{2}}, \\
\mu^{n-\frac{1}{2}} = -\frac{\varepsilon^2}{2}  \Big( \Delta_h  \phi^{n-1}+ \Delta_h \phi^n \Big) + \frac{1}{4} \Big( (\phi^{n-1}+\phi^n) ((\phi^{n})^2 +(\phi^{n-1})^2)  \Big) -(\frac{3}{2}\phi^{n-1} - \frac{1}{2}\phi^{n-2}),
\eea
\right.
\eeq
where the formula for the temporal fractional derivative is given in \eqref{eq30}.
\end{scheme}

Here we emphasize that the fully discrete scheme also satisfies the three properties: solution existence and uniqueness, mass conservation, and energy dissipation in the full discrete sense. We omit the details as the proofs for the fully discrete scheme are similar to those for the semi-discrete scheme.

\section{Numerical examples}
In this section, the fully discrete numerical scheme in \eqref{eq:full-discrete-scheme} is implemented. Then, we conduct a time-step refinement test to show the second-order temporal accuracy of the proposed scheme. Afterward, several numerical examples are shown to investigate the effects of fractional order $\alpha$ and initial profiles on phase separation dynamics.

\subsection{Convergence tests}
First of all, we perform a time convergence test to demonstrate its order of accuracy. Here we choose the domain $\Omega=[0 \,\,\, 1]^2$, and the parameters $\varepsilon=0.01$, $M=10\times 10^{-5}$. We use $128^2$ meshes, and choose the initial profile for $\phi$ as
\beq
\phi(x,y,t=0) = \tanh \Big( \frac{1}{\sqrt{2}\epsilon}  [\sqrt{(x-x_0)^2 + (y-y_0)^2}-0.25 - \frac{1+\cos(4 \arctan\frac{y-y_0}{x -x_0})}{16}] \Big),
\eeq
with $x_0=y_0=0.5$. For different fractional order $\alpha$, the numerical solution at $T=0.01$ using different time steps $\Delta t = \frac{10^{_3}}{2^n}$, $n in \mathbb{N}$ are calculated.  Since there is generally hard to find the exact solution,  we define the reference 'exact' solution $\phi_{ij}^{ref}$ by the result with its nearest finer time step. the discrete $l^2$ norm of numerical errors $e_{ij}:=\phi_{ij}-\phi_{ij}^{ref}$ are summarized in Table \ref{tab:convergence}. It is observed that 2nd order accuracy in time is reached for the testing problem.
\begin{table}[H]
\centering
\caption{The $l^2$ norm of numerical errors  for $\phi$  at time $T=0.01$ for time fractional Cahn-Hilliard equation with various fractional order $\alpha$. They are computed by the proposed scheme in \eqref{eq:full-discrete-scheme} using various temporal step sizes.}
\label{tab:convergence}
%\vskip 0.5cm
\small
\begin{tabular}{|c|c|c|c|c|c|c|}
\hline
$\delta t$ & $L^2$ Error ($\alpha=0.8$) & Order  & $L^2$ Error ($\alpha=0.5$) & Order  & $L^2$ Error ($\alpha=0.35$) & Order \\
\hline
0.001  & $7.096 \times 10^{-6}$  &  & $4.357  \times 10^{-5}$ &  & $1.227\times 10^{-4}$ & \\
\hline
0.0005 & $1.854  \times 10^{-6}$ & 1.94 & $1.094 \times 10^{-5}$  & 1.99 & $1.603 \times 10^{-5}$ & 2.94 \\
\hline
0.00025 & $4.811  \times 10^{-7}$ & 1.95 & $2.725 \times 10^{-6}$ & 2.01 & $3.970 \times 10^{-6}$ & 2.01 \\
\hline
0.000125 & $1.241  \times 10^{-7}$ & 1.96& $6795 \times 10^{-7}$ & 2.00 & $9.897 \times 10^{-7}$ &  2.00 \\
\hline
0.0000625 & $3.186 \times 10^{-8}$ & 1.97 & $1.696 \times 10^{-7}$ & 2.00 & $2.472 \times 10^{-7}$ & 2.00 \\
\hline
0.00003125 & $8.146 \times 10^{-9}$ & 1.97 & $4.234 \times 10^{-8}$ & 2.00  & $6.186 \times 10^{-8}$ & 1.99 \\
\hline
\end{tabular}
\end{table}

\subsection{Coarsening dynamics with various fractional order $\alpha$}
Next, we study the coarsening dynamics of the time-fractional Cahn-Hilliard equation with the newly proposed scheme. In this case, we choose the domain $\Omega=[0 \,\,\, 1]^2$, parameters $\varepsilon=0.01$, $M = 0.1$. And we use $128^2$ meshes.  For this case, we use a randomly generated initial condition for $\phi$ as
\beq
\phi(x,y, t=0)=10^{-3} rand(x,y),
\eeq
with $rand(x,y) \in [-1 \,\,\, 1]$ random numbers with uniform distribution. To reduce computational time without loosing accuracy, we use an adaptive time marching strategy. Defining $\Delta t_{\min}=10^{-4}$, $\Delta t_{\max}=10^{-1}$ and $\beta=10^{7}$,  the time step $\Delta t_{n+1}$ is chosen by following the formula
\beq \label{eq:time-adaptive}
\Delta t_{n+1} =
\left\{
\bea{l}
\Delta t_{\min}, \quad  n < K, \\
\max  ( \Delta t_{\min}, \frac{ \Delta  t_{\max}}{ \sqrt{1 + \beta \Big| \frac{E^{n}-E^{n-1}}{\Delta t_n} \Big|^2 } } ), \quad n \geq K, \
\eea
\right.
\eeq
Here we pick $K=100$.
The profiles of $\phi$ at different times are summarized in Figure \ref{fig:Coarsening-phi}. We observe either case has similar coarsening dynamics.
From the numerical simulations in \ref{fig:Coarsening-phi}, we also observe that the coarsening dynamics with smaller fractional-order $\alpha$ is faster than that with bigger fractional-order $\alpha$ in the time range $[0 \,\,\, 1]$.

\begin{figure}[H]
\center
\subfigure[$\alpha=0.35$]{
\includegraphics[width=0.15\textwidth]{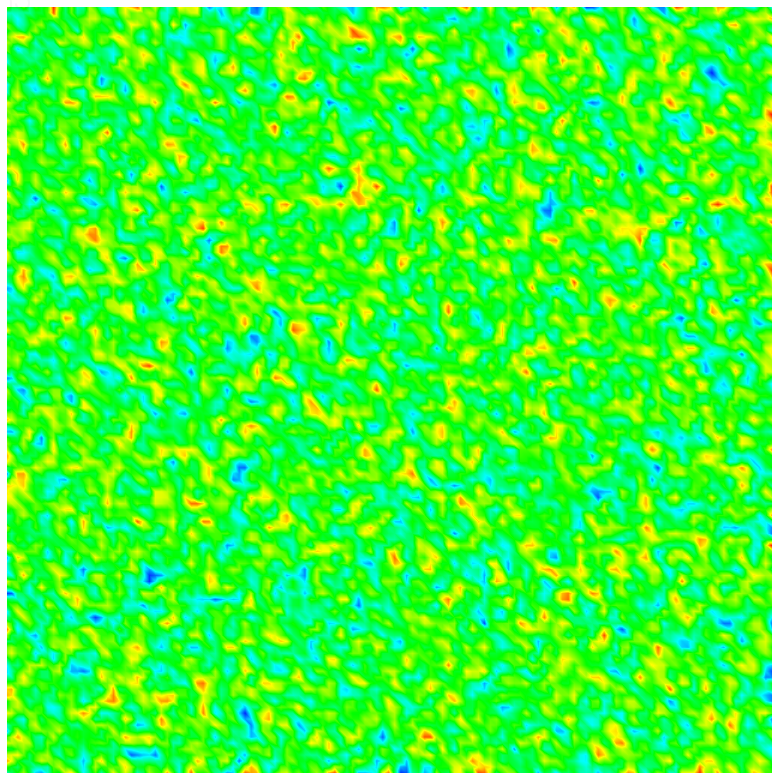}
\includegraphics[width=0.15\textwidth]{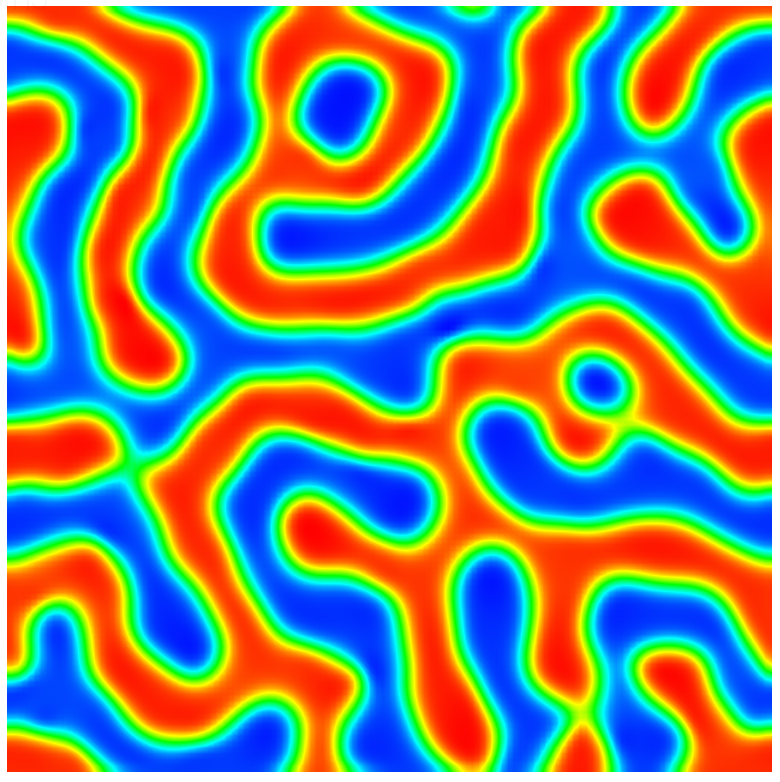}
\includegraphics[width=0.15\textwidth]{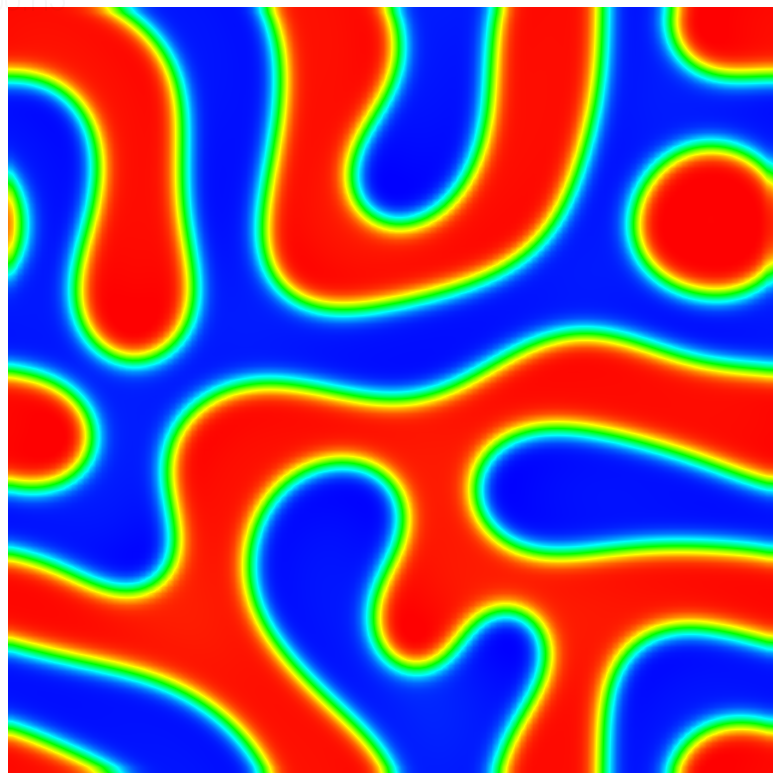}
\includegraphics[width=0.15\textwidth]{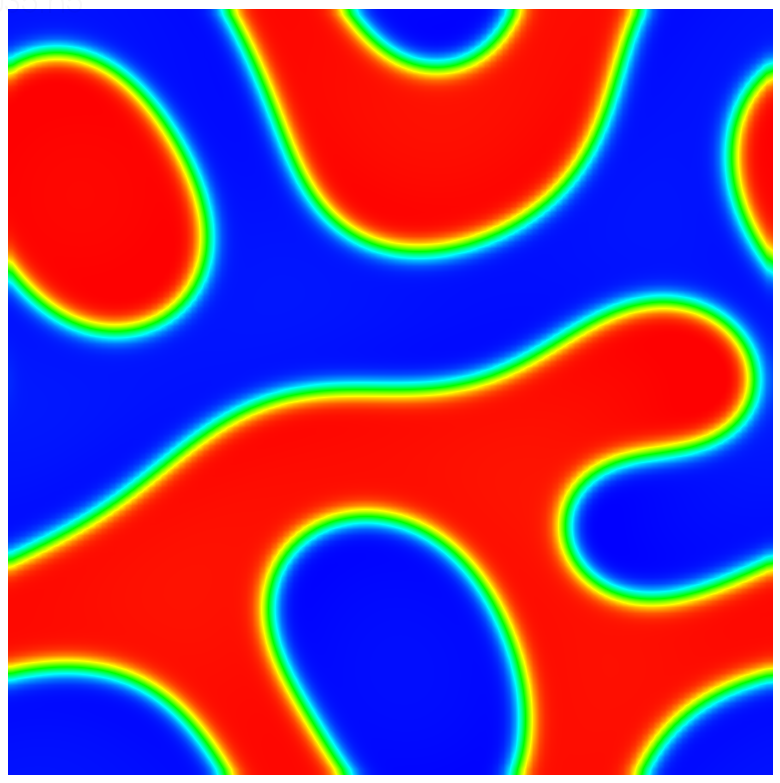}
}

\subfigure[$\alpha=0.5$]{
\includegraphics[width=0.15\textwidth]{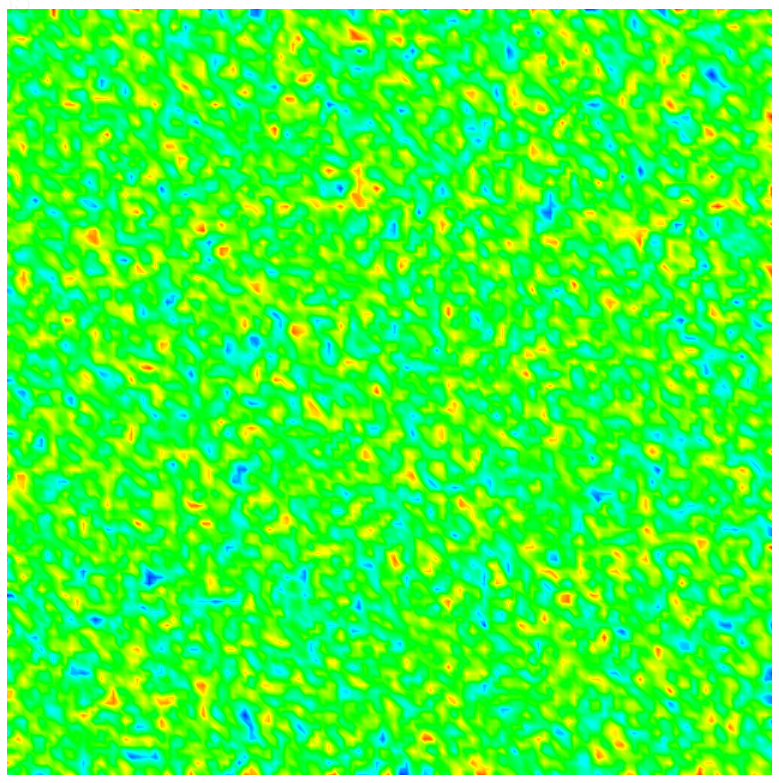}
\includegraphics[width=0.15\textwidth]{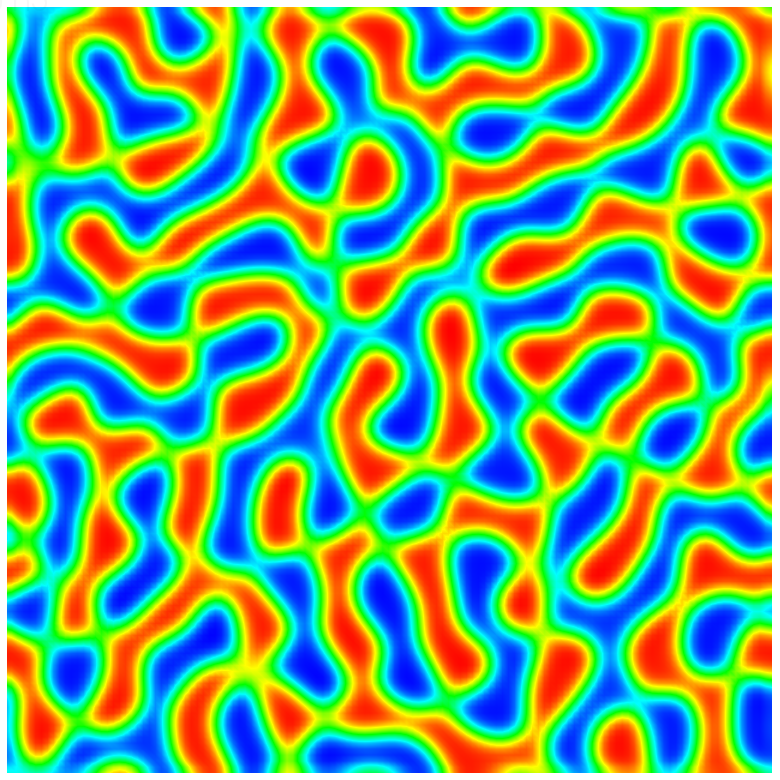}
\includegraphics[width=0.15\textwidth]{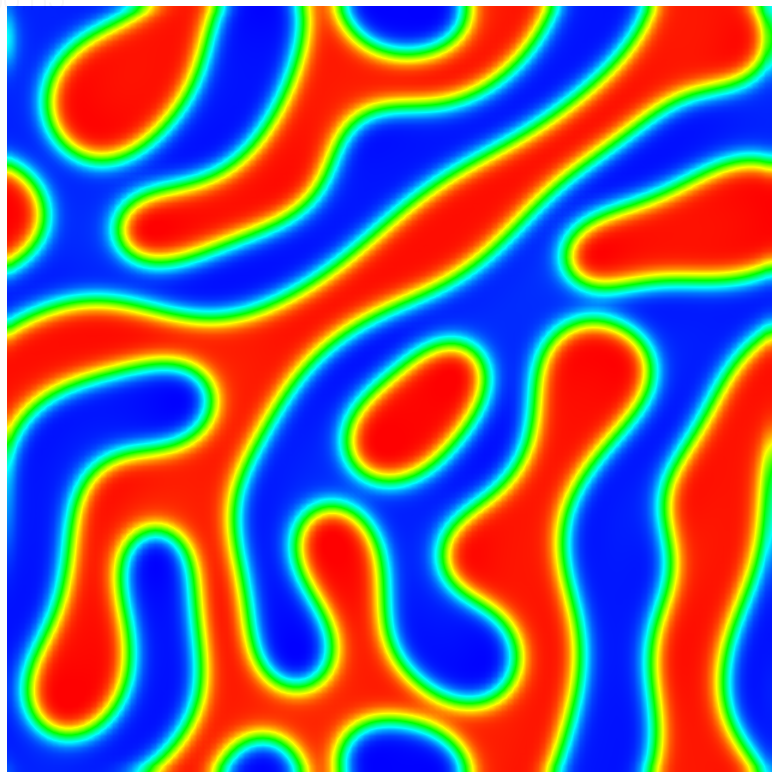}
\includegraphics[width=0.15\textwidth]{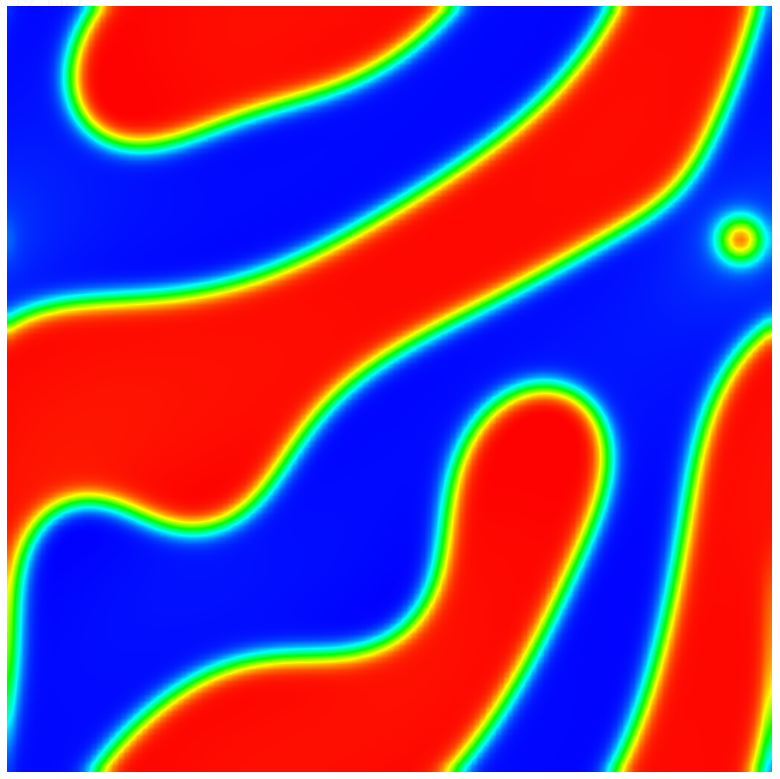}
}

\subfigure[$\alpha=0.8$]{
\includegraphics[width=0.15\textwidth]{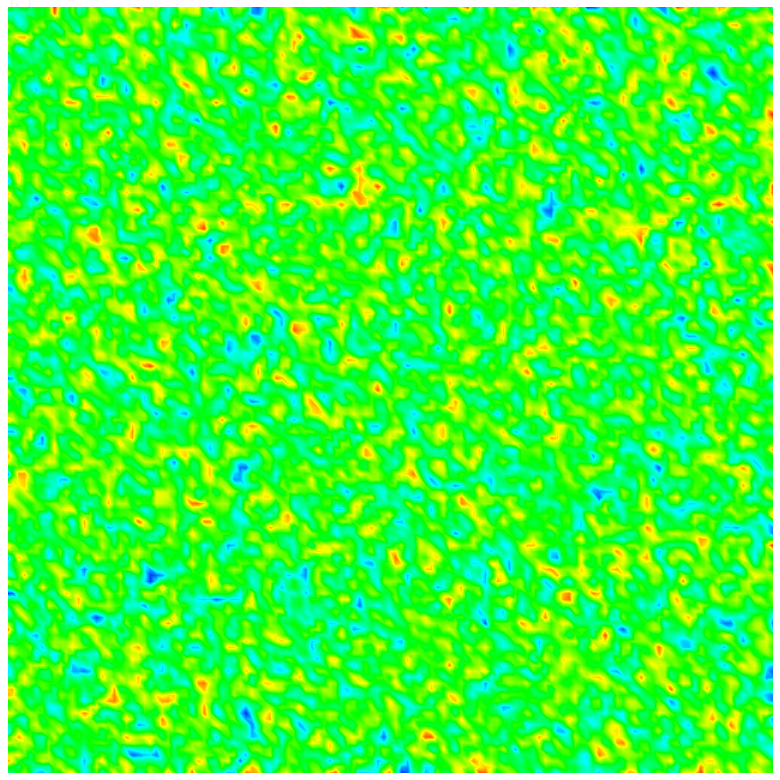}
\includegraphics[width=0.15\textwidth]{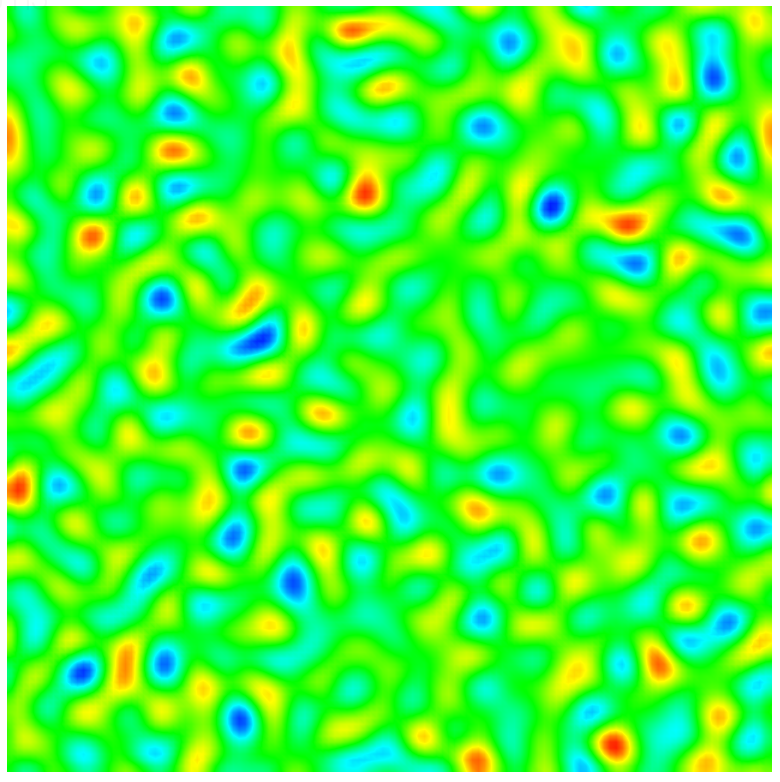}
\includegraphics[width=0.15\textwidth]{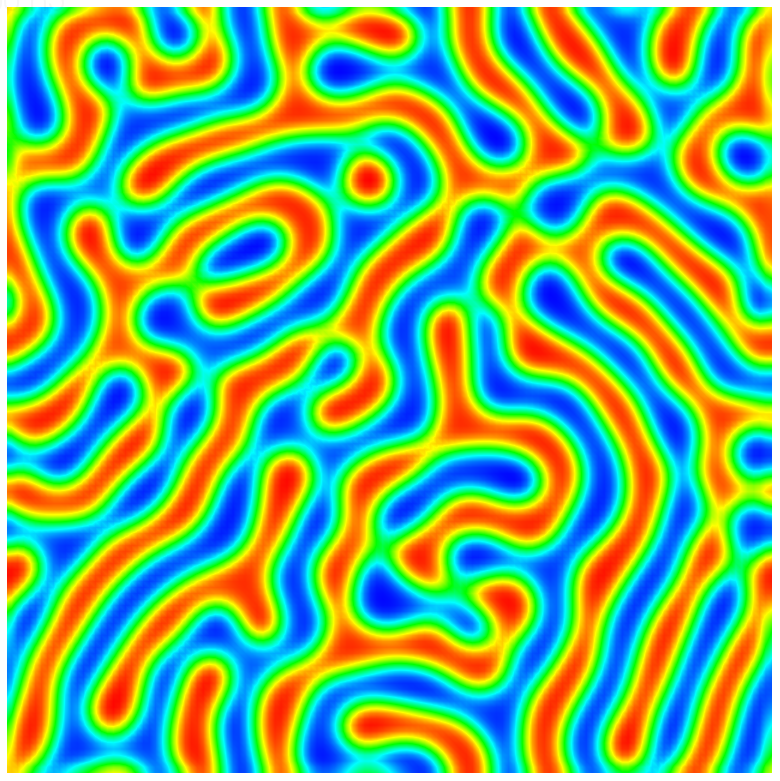}
\includegraphics[width=0.15\textwidth]{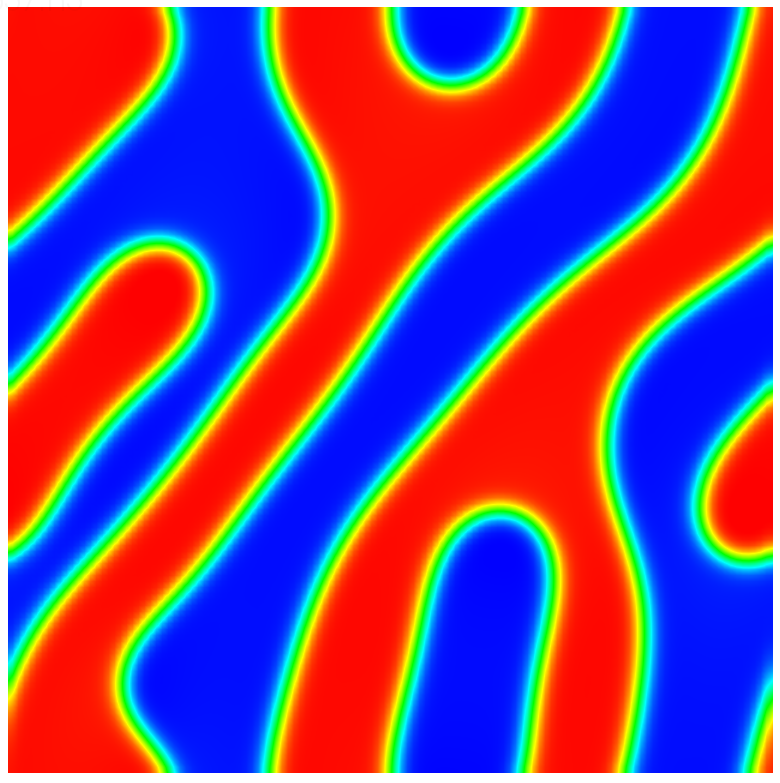}
}

\caption{Comparison of coarsening dynamics with different fractional order $\alpha$. Here we choose $\alpha=0.35,0.5,0.8$ respectively. The profile of $\phi$ at approximately $t=0,0.001,0.2,1$ are plotted.}
\label{fig:Coarsening-phi}
\end{figure}

To further verify this, their corresponding energy evolution with time is shown in Figure \ref{fig:Coarsening-energy}, where we observe the energy with smaller fractional derivative $\alpha$ evolves faster. Also, we observe the energies in all cases (with different time-fractional derivative $\alpha$) decrease with time. Notice in both the continuous theorem \eqref{eq6} and the discrete theorem \eqref{eq37}, it could only be shown that the energy is bounded by the initial energy value. Whether the energy is non increasing in time or not is still an open question.

\begin{figure}[H]
\center
\subfigure[$\alpha=0.35$]{\includegraphics[width=0.3\textwidth]{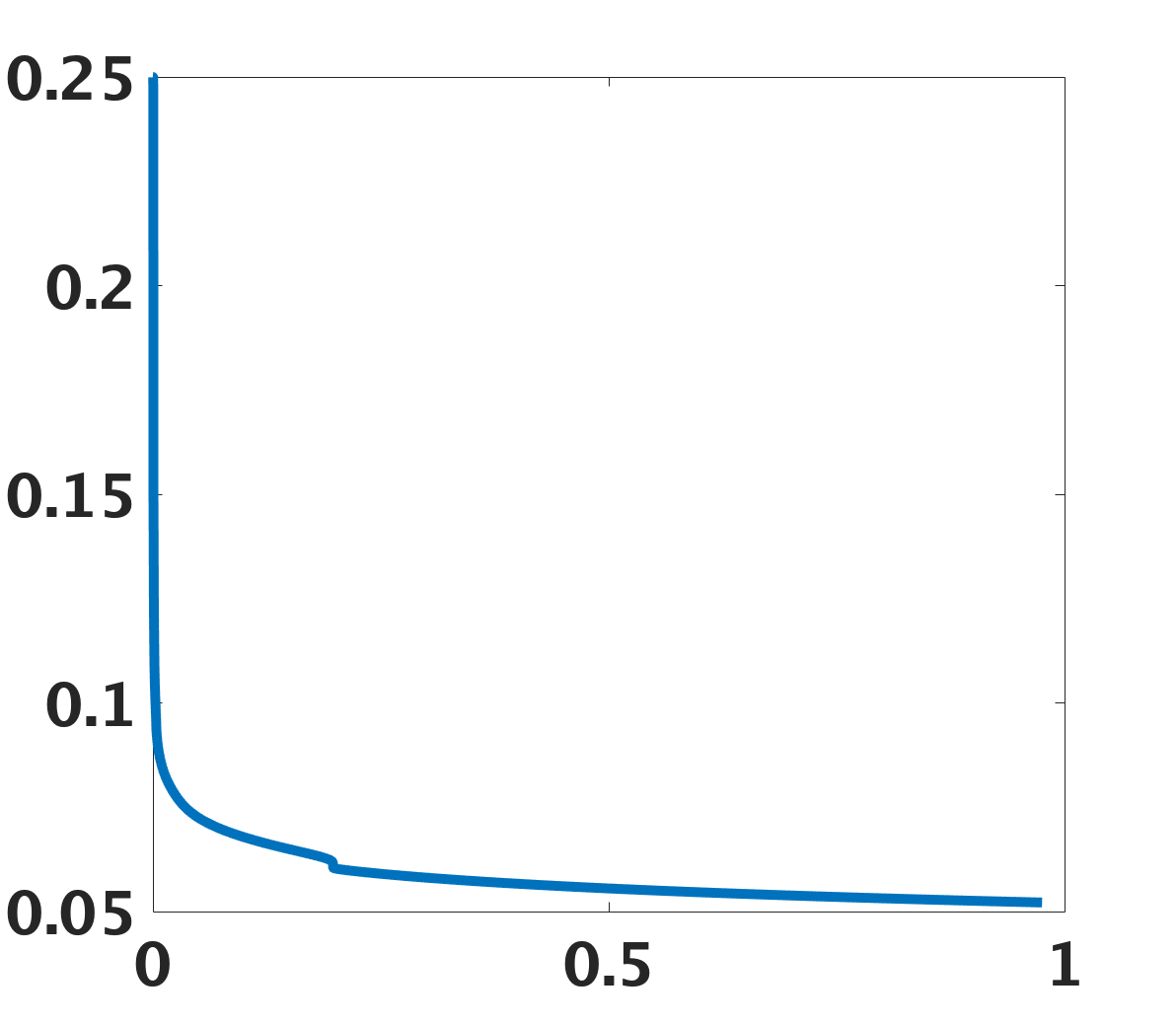}}
\subfigure[$\alpha=0.5$]{\includegraphics[width=0.3\textwidth]{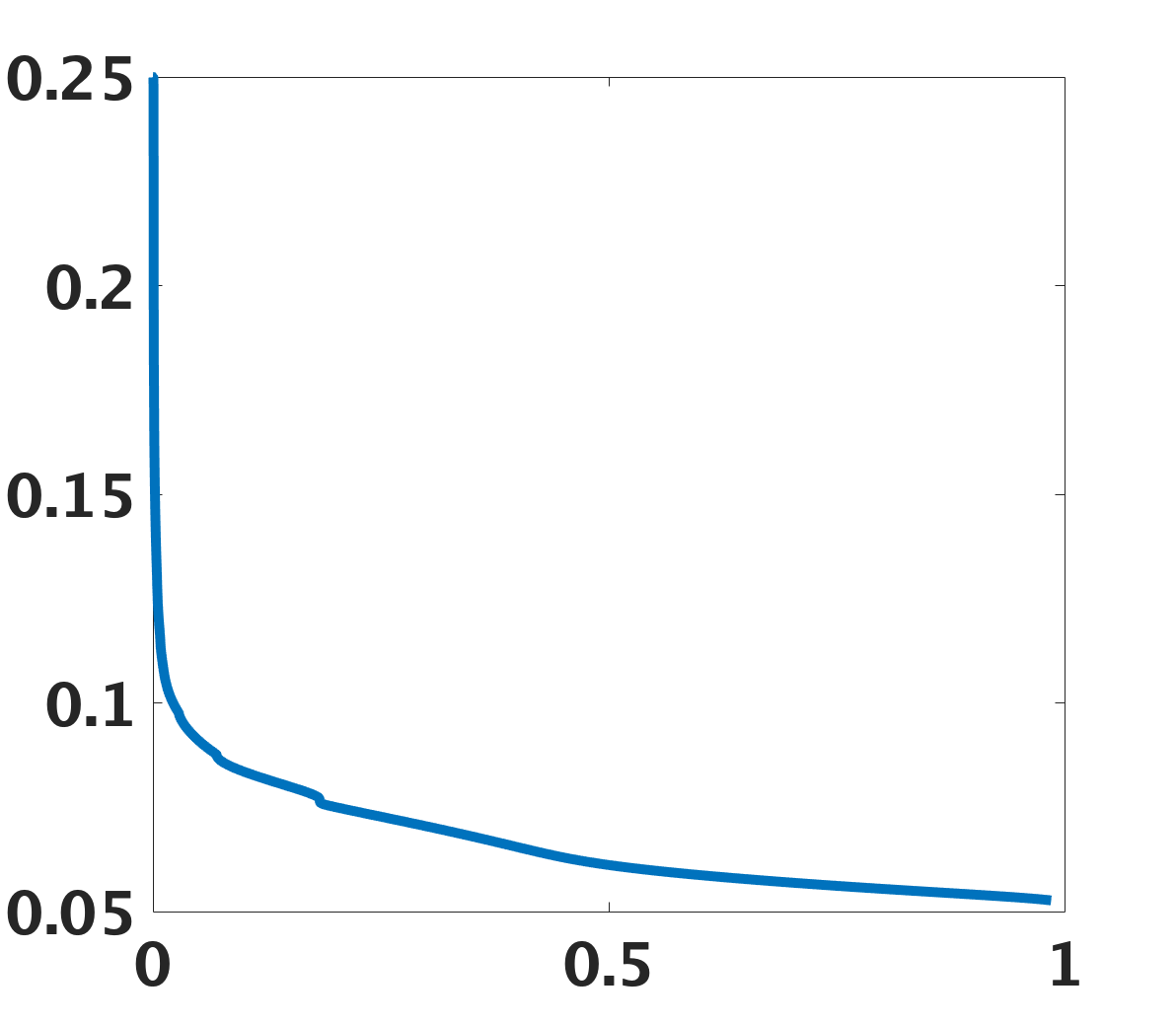}}
\subfigure[$\alpha=0.8$]{\includegraphics[width=0.3\textwidth]{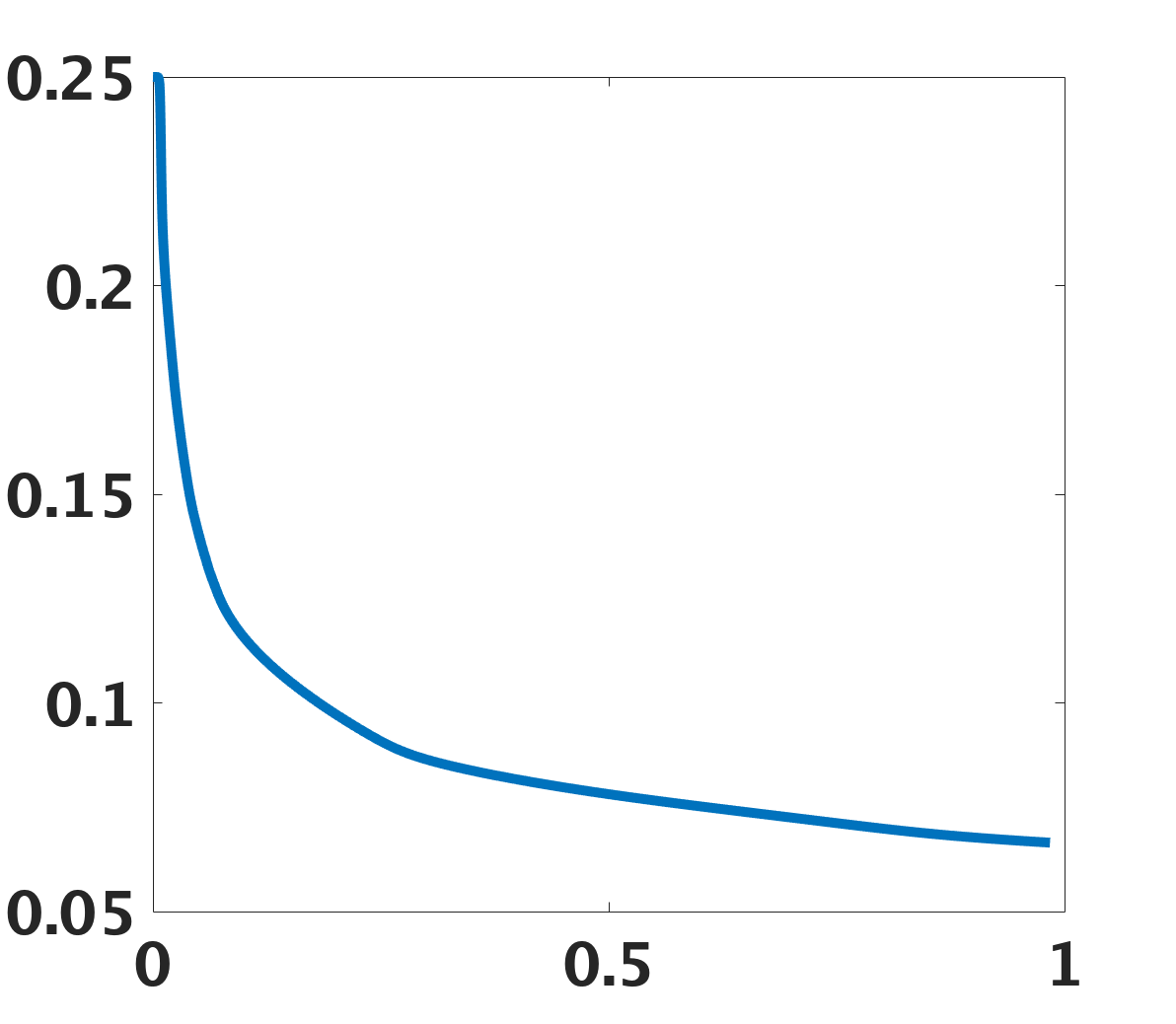}}
\caption{Energy evolution during the coarsening dynamics. This figures show the corresponding energies decreasing with time, with various fractional orders $\alpha=0.35,0.5,0.8$. }
\label{fig:Coarsening-energy}
\end{figure}

In addition, the corresponding time step sizes used for the simulations in Figure \ref{fig:Coarsening-phi} is also summarized in Figure \ref{fig:Coarsening-dt}.
%The corresponding time step sizes and energy curves are summarized in Figure \ref{fig:Coarsening-dt} and \ref{fig:Coarsening-energy} respectively. And
We observe that the time step size increases when the energy evolves slower. This adaptive time strategy has saved the computational resources significantly.

\begin{figure}[H]
\center
\subfigure[$\alpha=0.35$]{\includegraphics[width=0.3\textwidth]{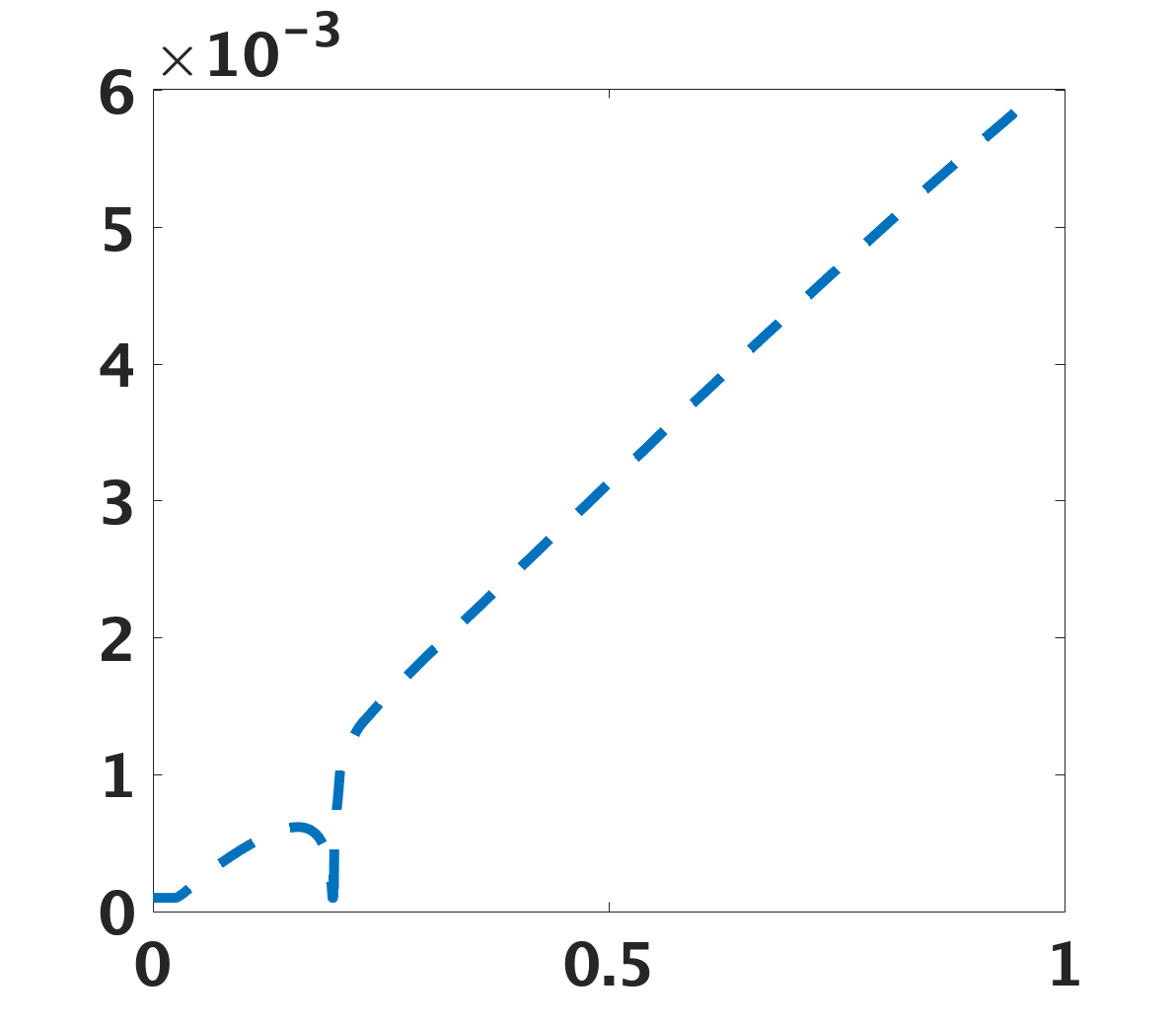}}
\subfigure[$\alpha=0.5$]{\includegraphics[width=0.3\textwidth]{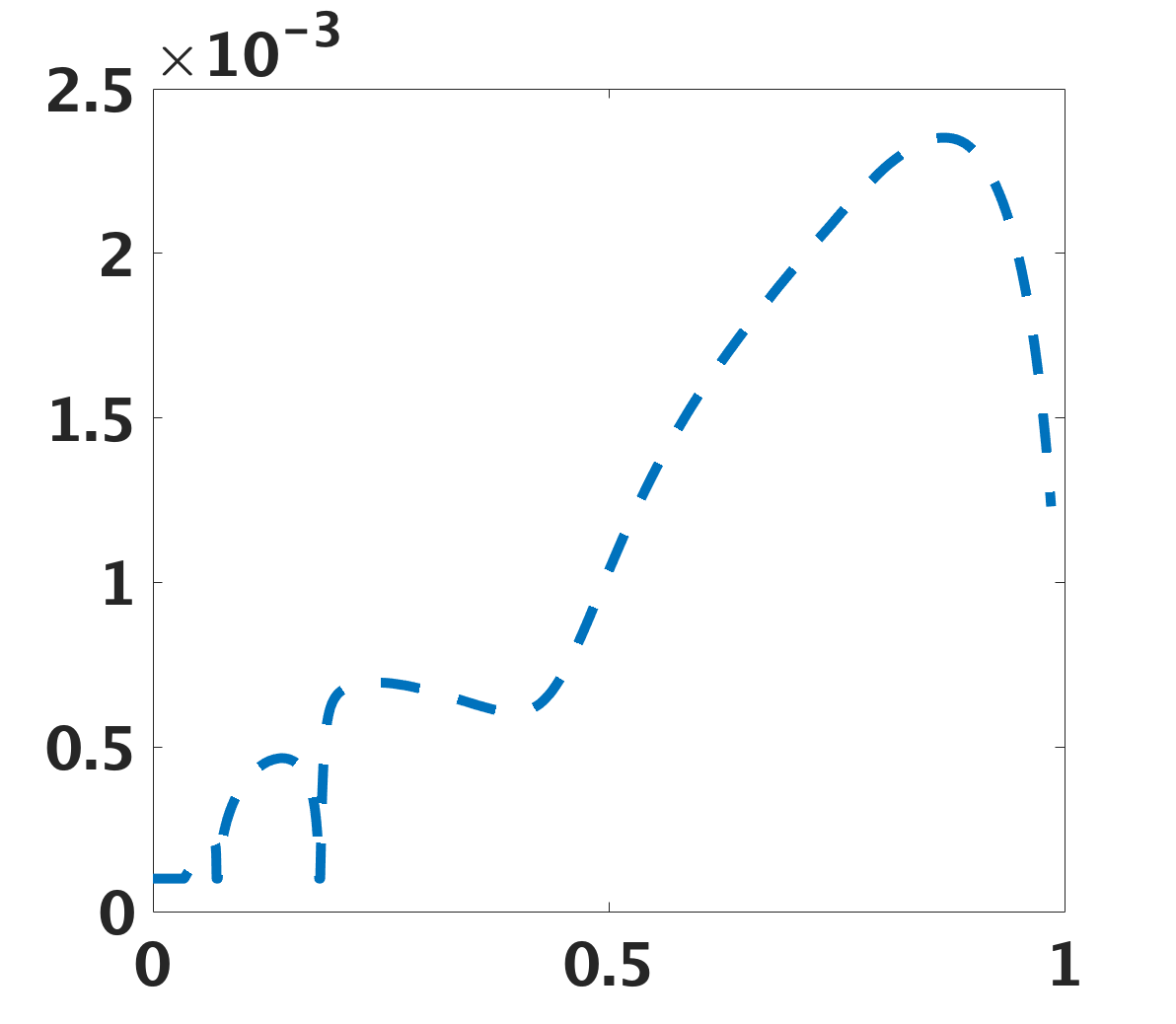}}
\subfigure[$\alpha=0.8$]{\includegraphics[width=0.3\textwidth]{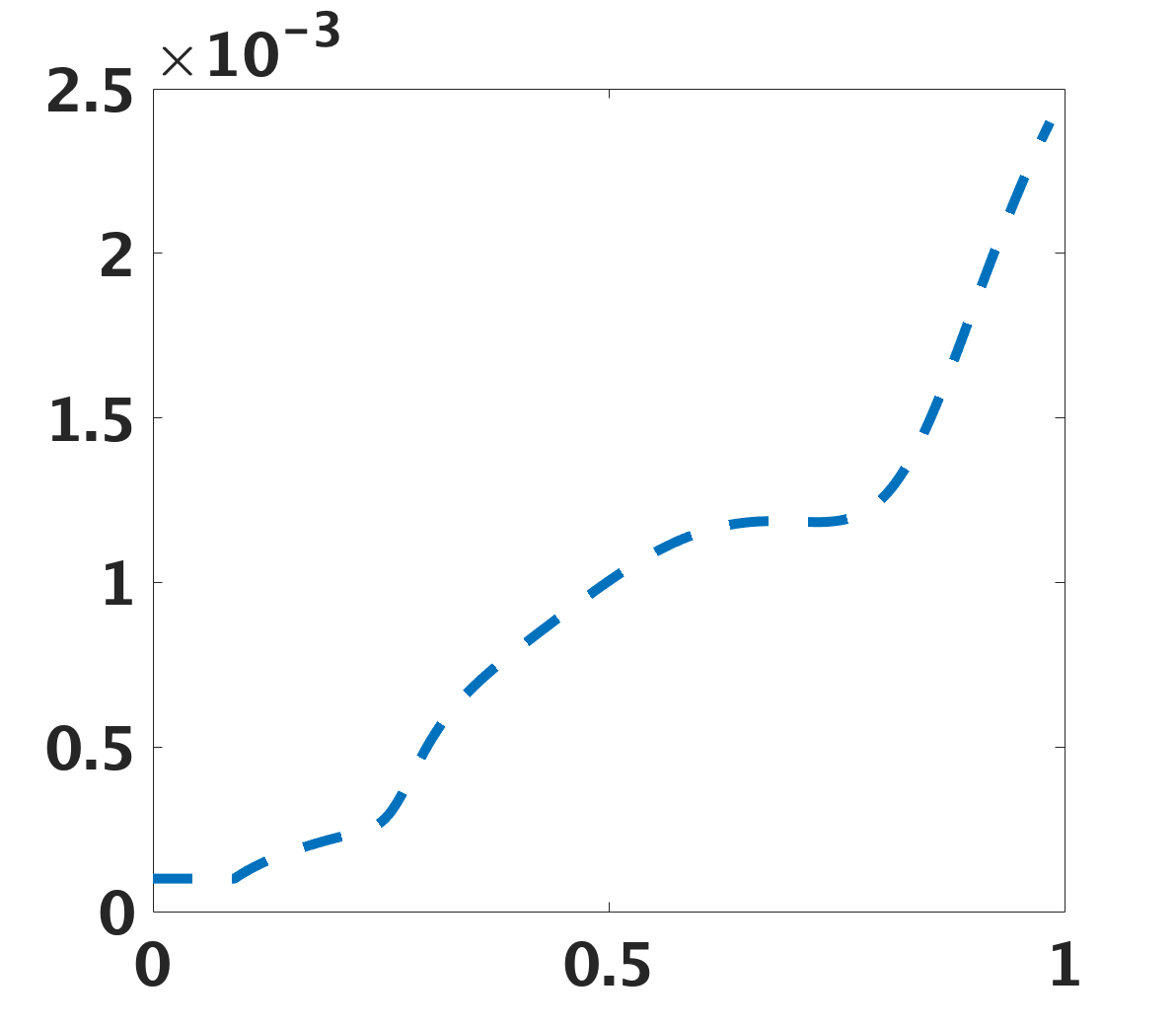}}
\caption{Adaptive time step sizes used during the simulation with various fractional order $\alpha=0.35, 0.5, 0.8$.}
\label{fig:Coarsening-dt}
\end{figure}

\subsection{Coarsening dynamics with various initial profiles}
Next, we investigate how the initial profile would affect the coarsening dynamics. Mainly, we use the same parameters as previous subsection, i.e. $\varepsilon=0.01$, $M = 0.1$. We set the domain  $\Omega=[0 \,\,\, 1] \times [0 \,\,\, 1]$ with $128^2$ meshes, and use the following initial condition
\beq
\phi(x,y, t=0)= \overline{\phi}_0 +  10^{-3} rand(x,y),
\eeq
with $rand(x,y) \in [-1 \,\,\, 1]$ random numbers with uniform distribution, and $\overline{\phi}_0$ a constant. Here we fix the time-fractional derivative $\alpha=0.7$, and use different $\overline{\phi}_0$. The numerical profiles for $\phi$ at different times are summarized in Figure \ref{fig:Coarsening-phi0}. We observe that when the volume ratio of two different components is near 1 (for instance, when $\overline{\phi}_0=0$), the driving mechanism for phase separation is spinodal decomposition; when the volume ration of two different components is away from 1, the driving mechanism for phase separation is nucleation. These phenomena are with a strong agreement with the phenomena for the integer Cahh-Hilliard equation. Also, for the nucleation dynamics, the component with less volume fraction will form droplets.

\begin{figure}[H]
\center
\subfigure[$\overline{\phi}_0 = -0.3$]{
\includegraphics[width=0.15\textwidth]{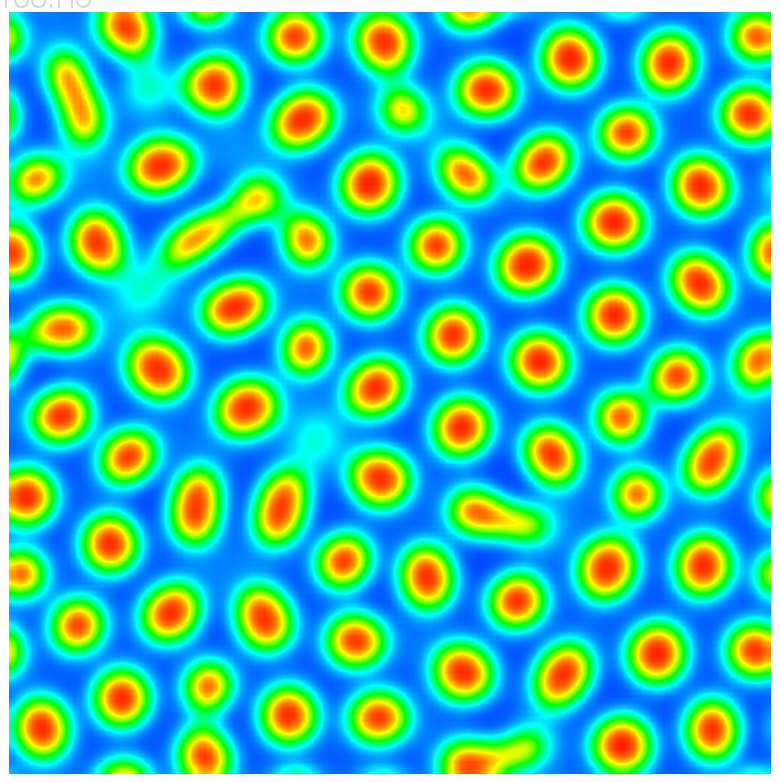}
\includegraphics[width=0.15\textwidth]{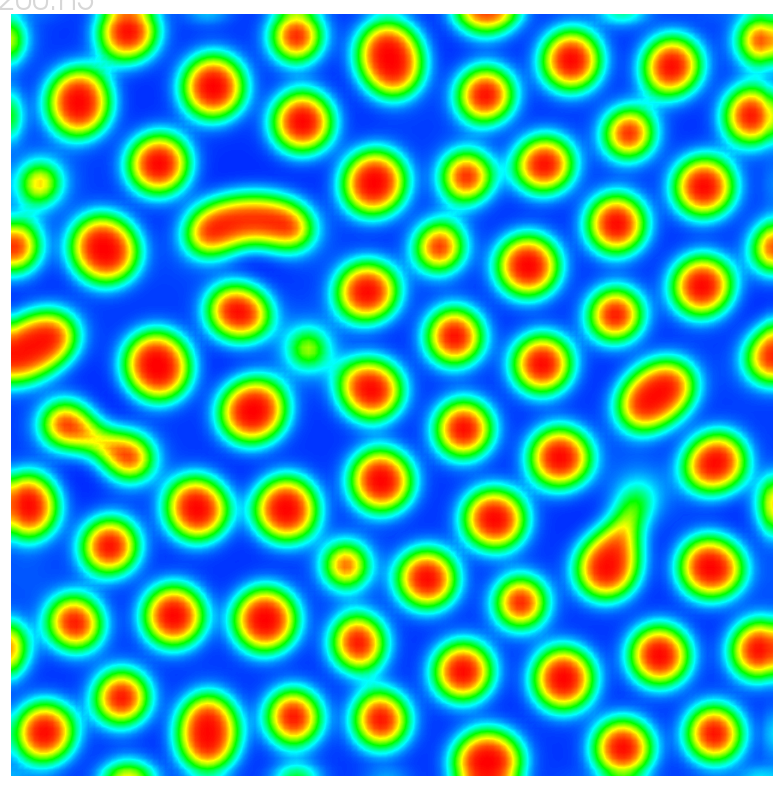}
\includegraphics[width=0.15\textwidth]{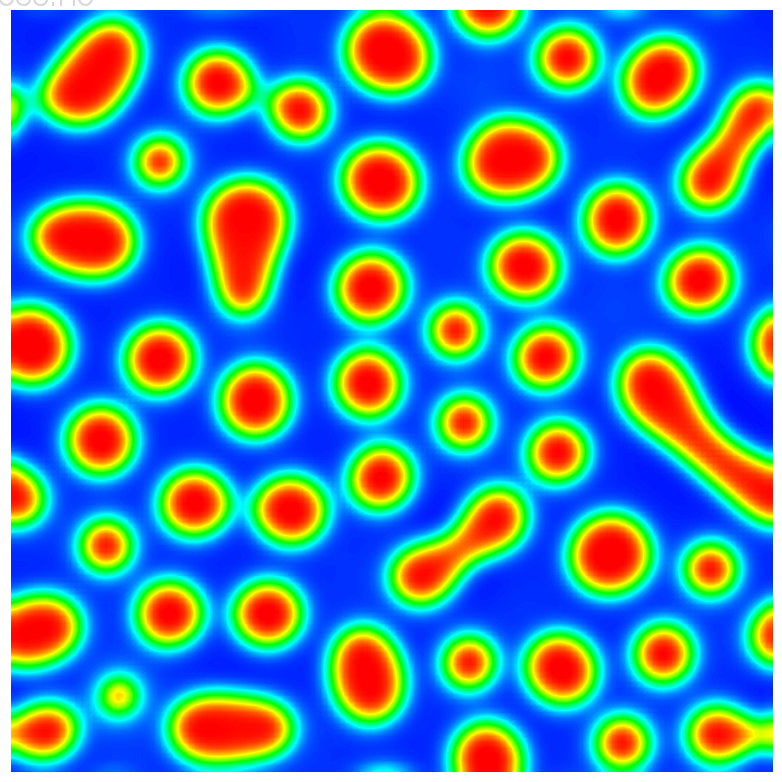}
\includegraphics[width=0.15\textwidth]{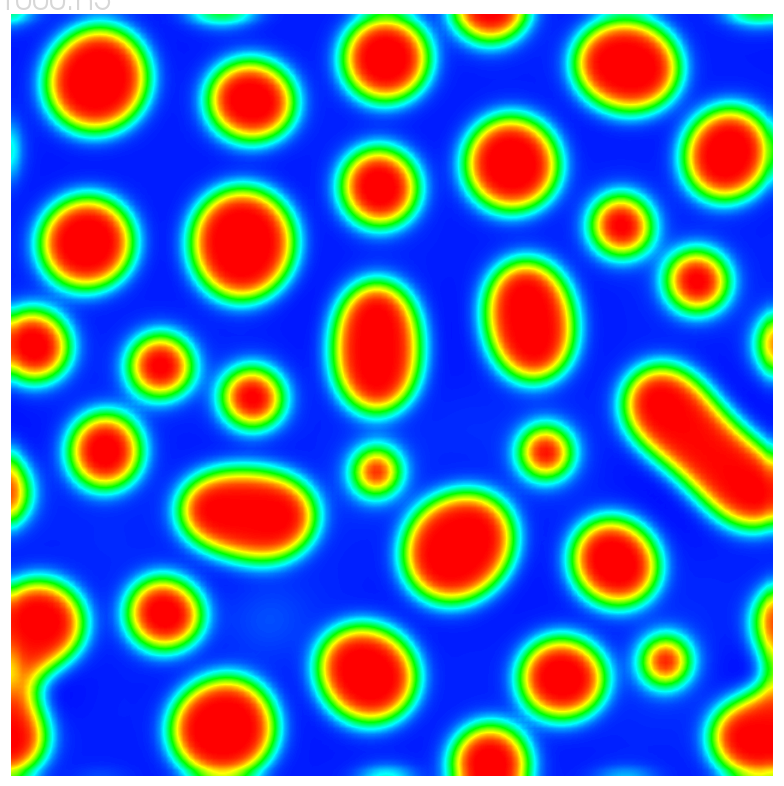}
\includegraphics[width=0.15\textwidth]{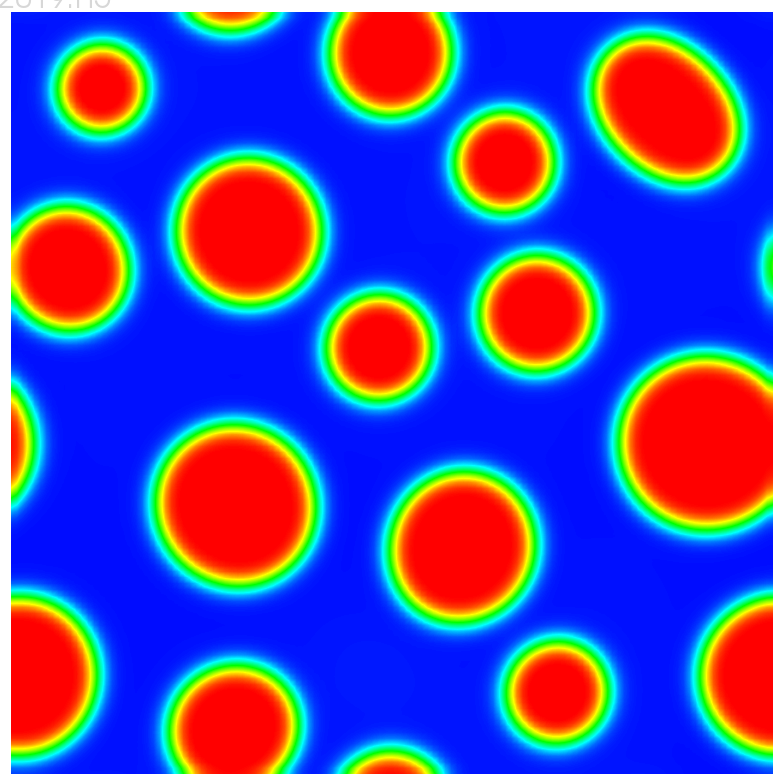}
}

\subfigure[$\overline{\phi}_0=-0.1$]{
\includegraphics[width=0.15\textwidth]{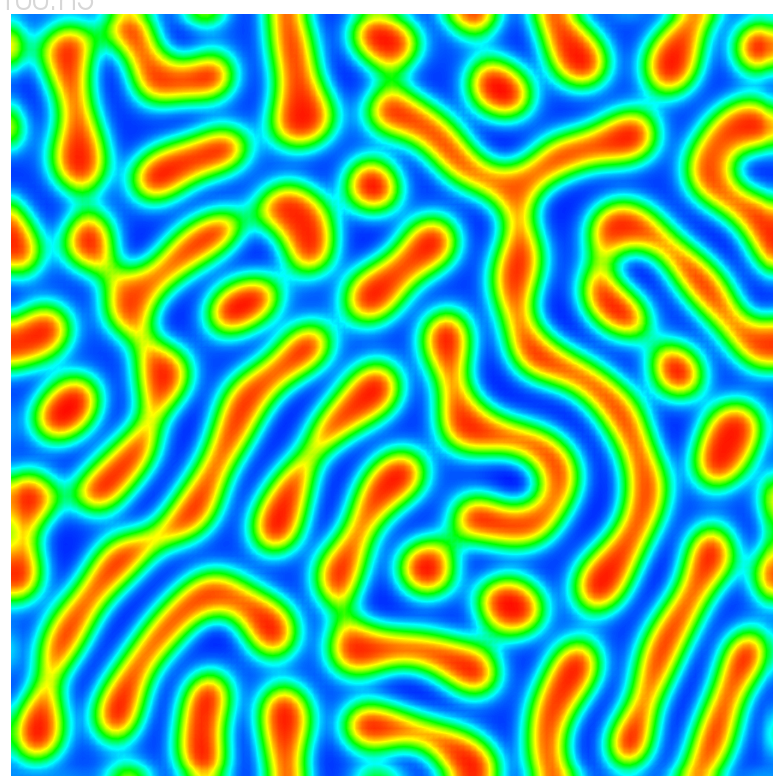}
\includegraphics[width=0.15\textwidth]{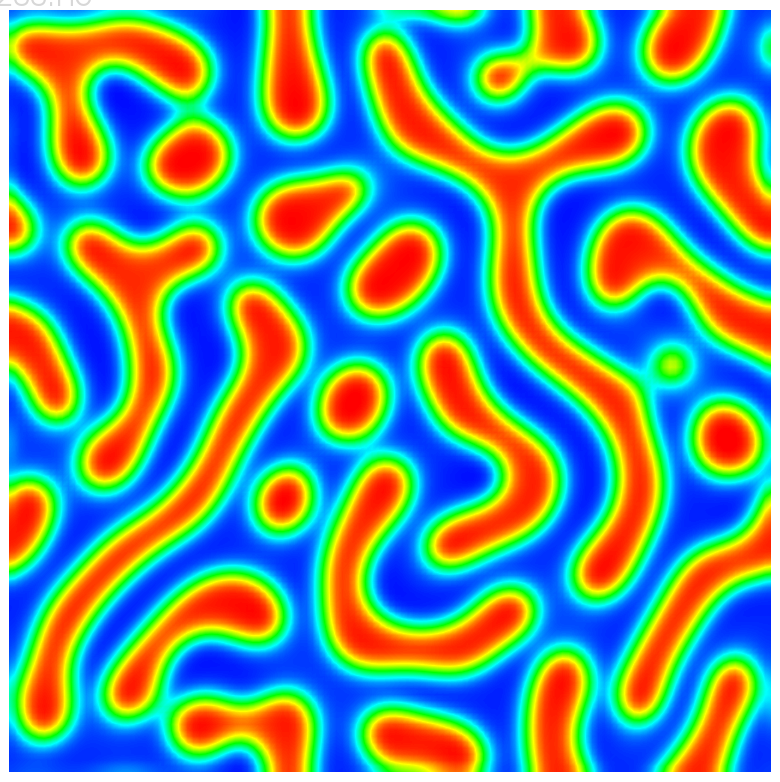}
\includegraphics[width=0.15\textwidth]{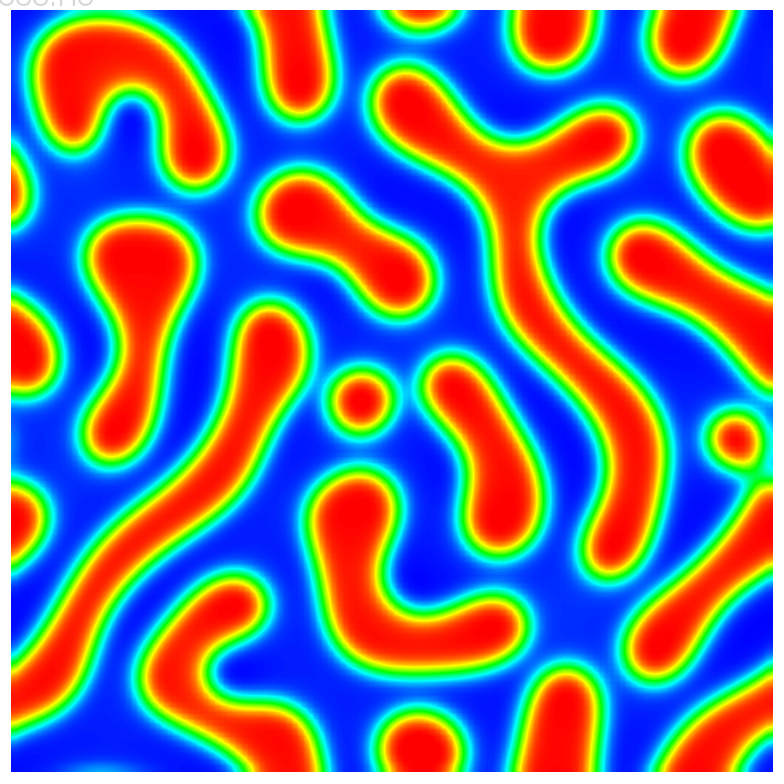}
\includegraphics[width=0.15\textwidth]{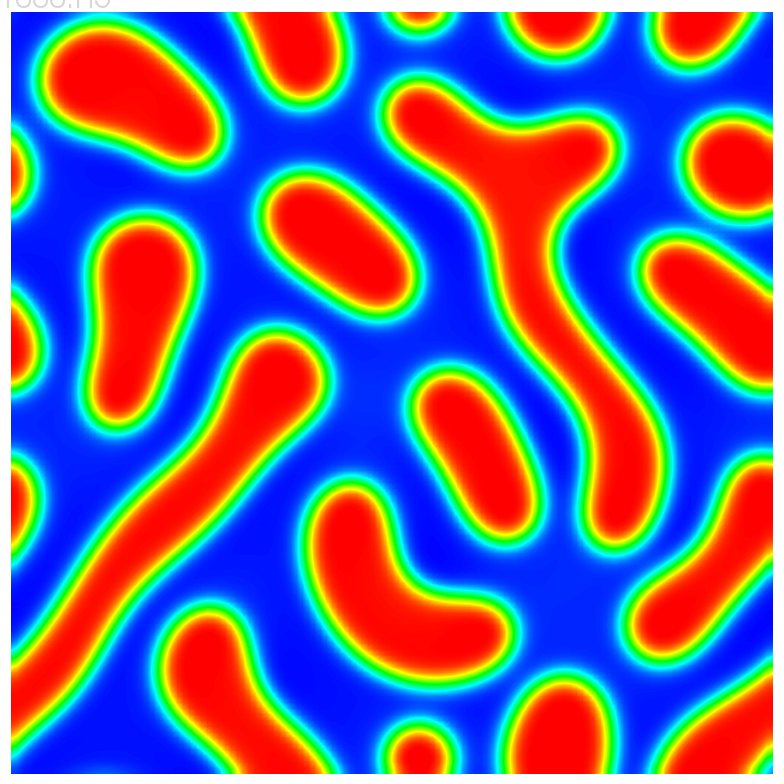}
\includegraphics[width=0.15\textwidth]{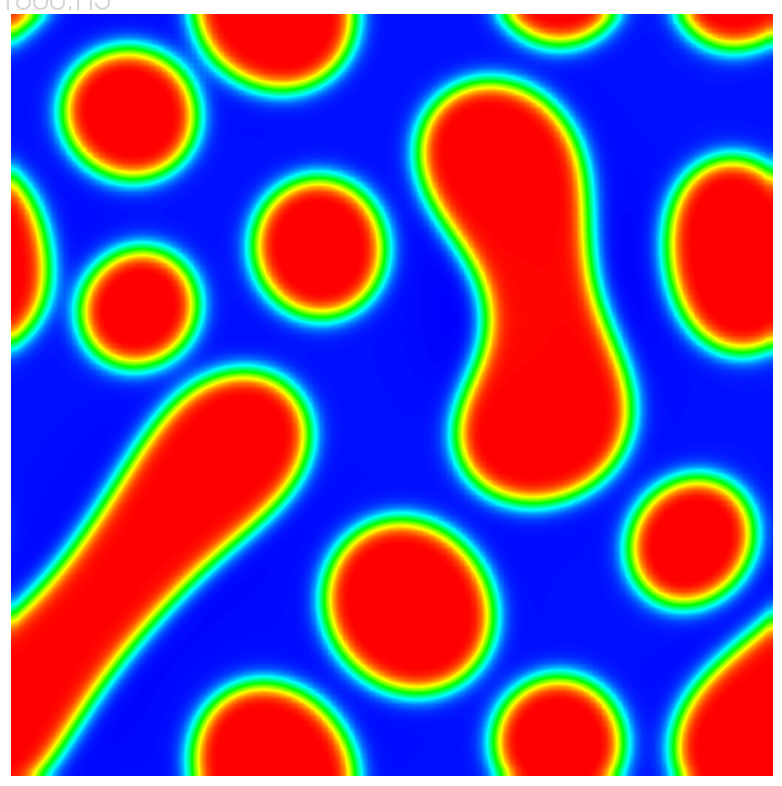}
}

\subfigure[$\overline{\phi}_0=0$]{
\includegraphics[width=0.15\textwidth]{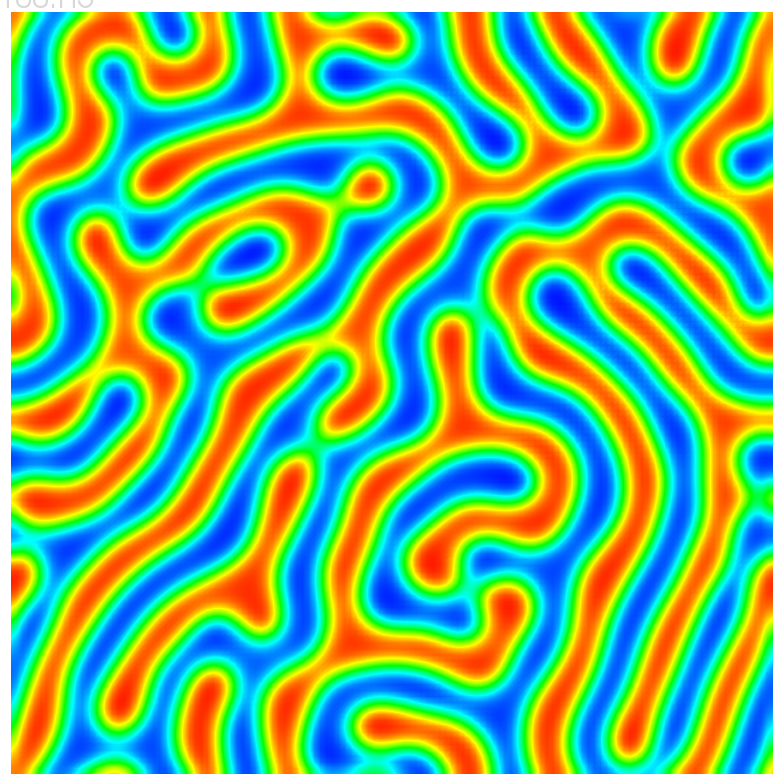}
\includegraphics[width=0.15\textwidth]{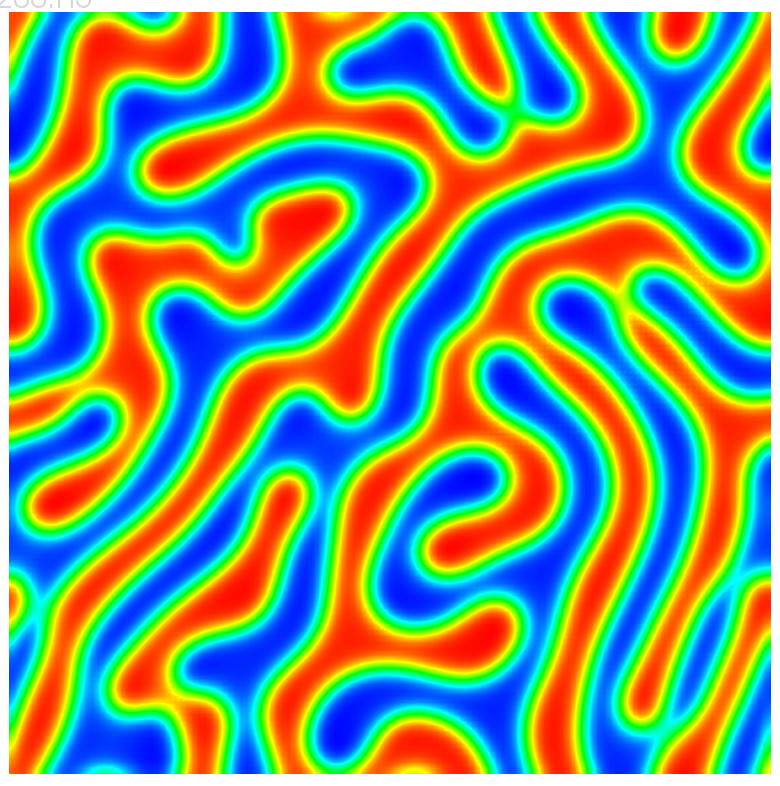}
\includegraphics[width=0.15\textwidth]{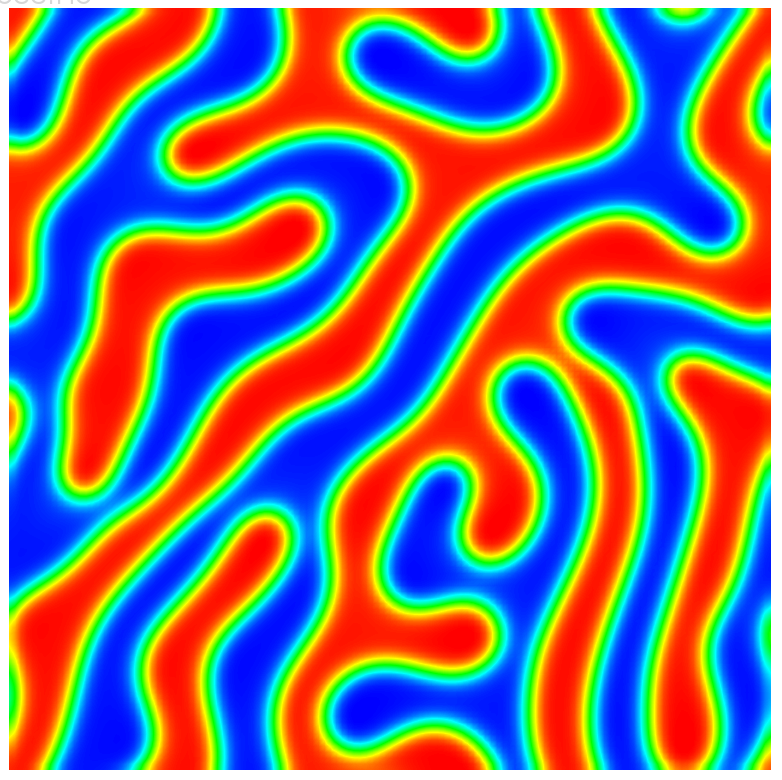}
\includegraphics[width=0.15\textwidth]{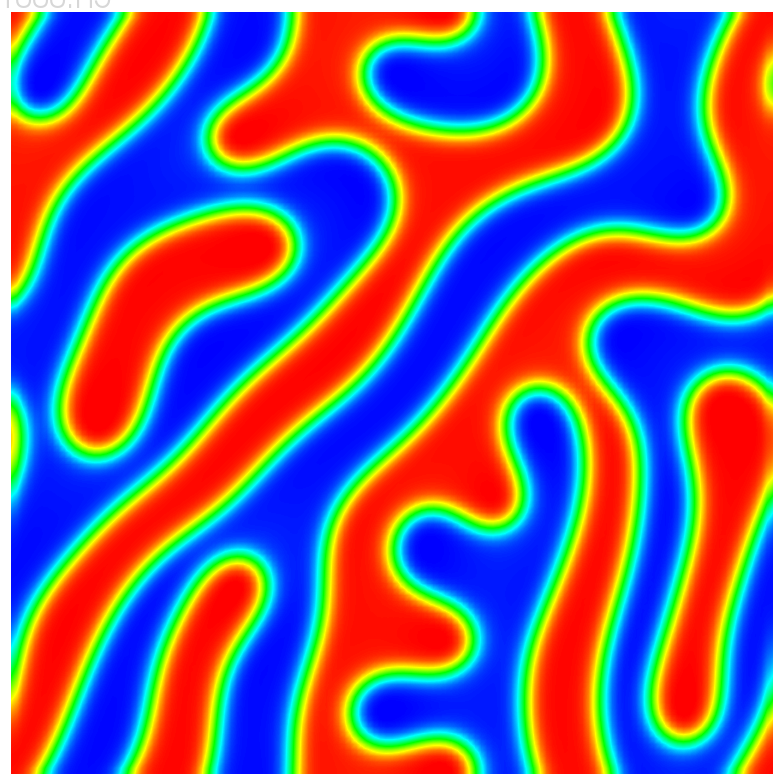}
\includegraphics[width=0.15\textwidth]{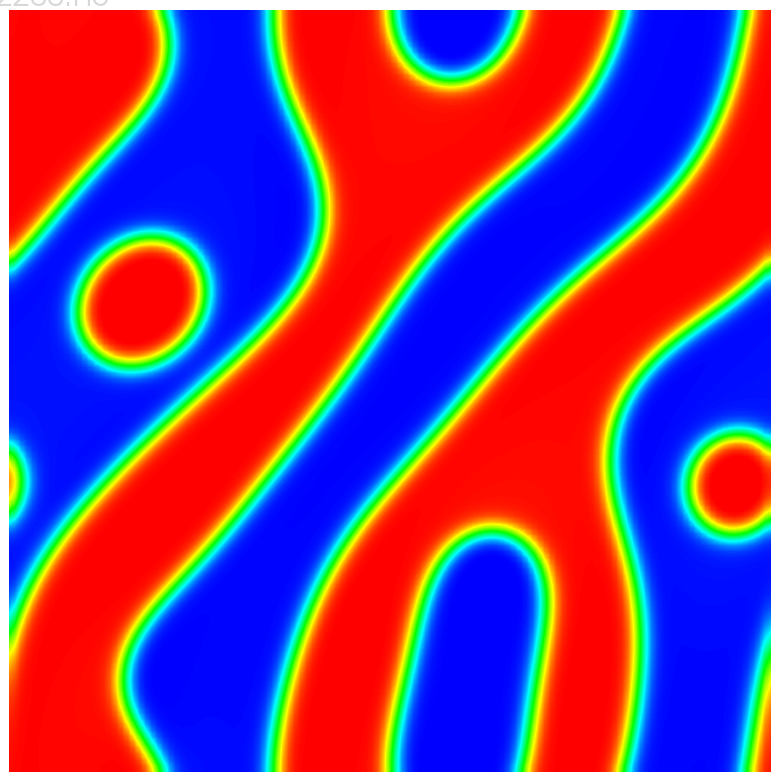}
}

\subfigure[$\overline{\phi}_0=0.1$]{
\includegraphics[width=0.15\textwidth]{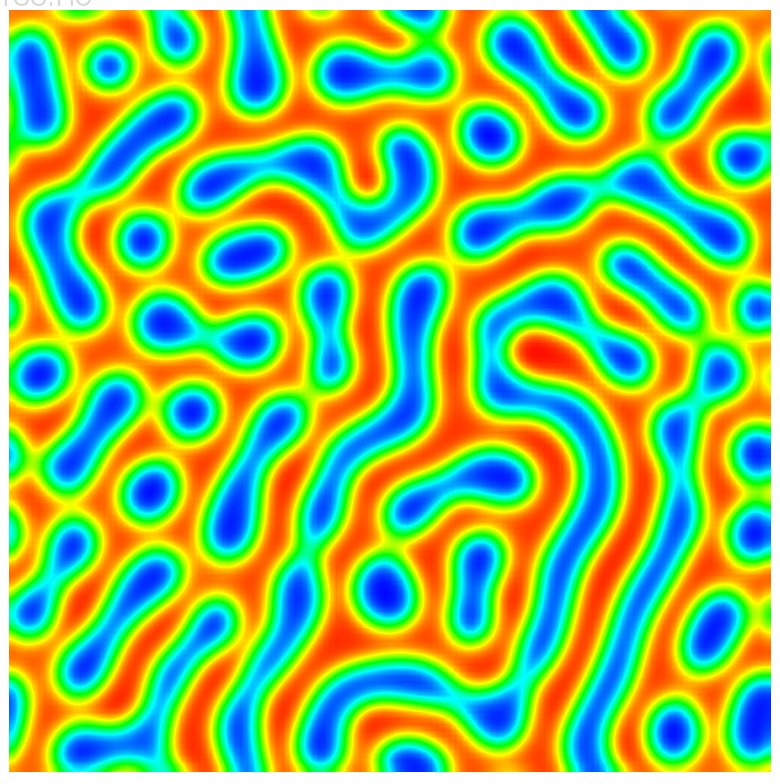}
\includegraphics[width=0.15\textwidth]{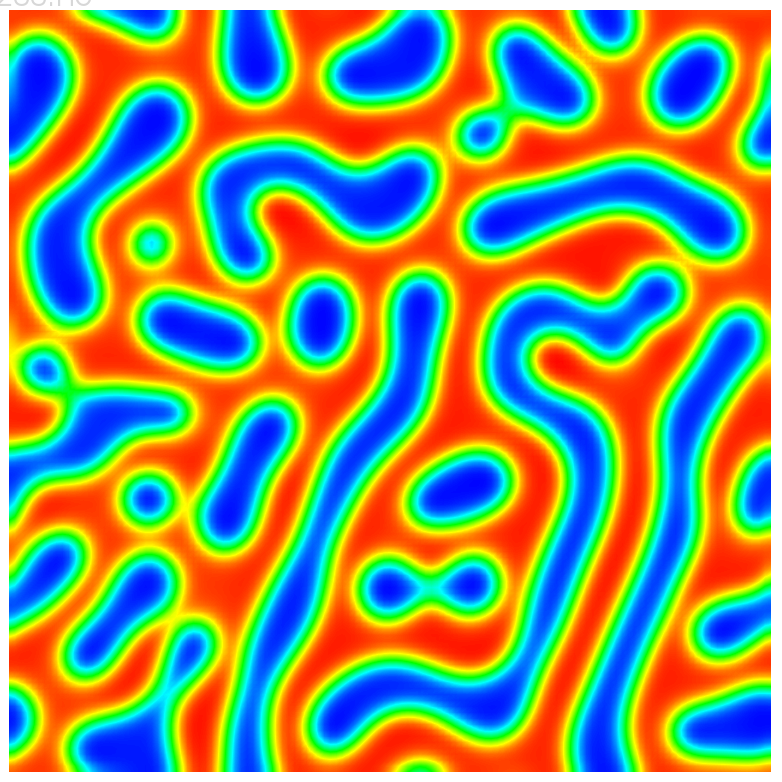}
\includegraphics[width=0.15\textwidth]{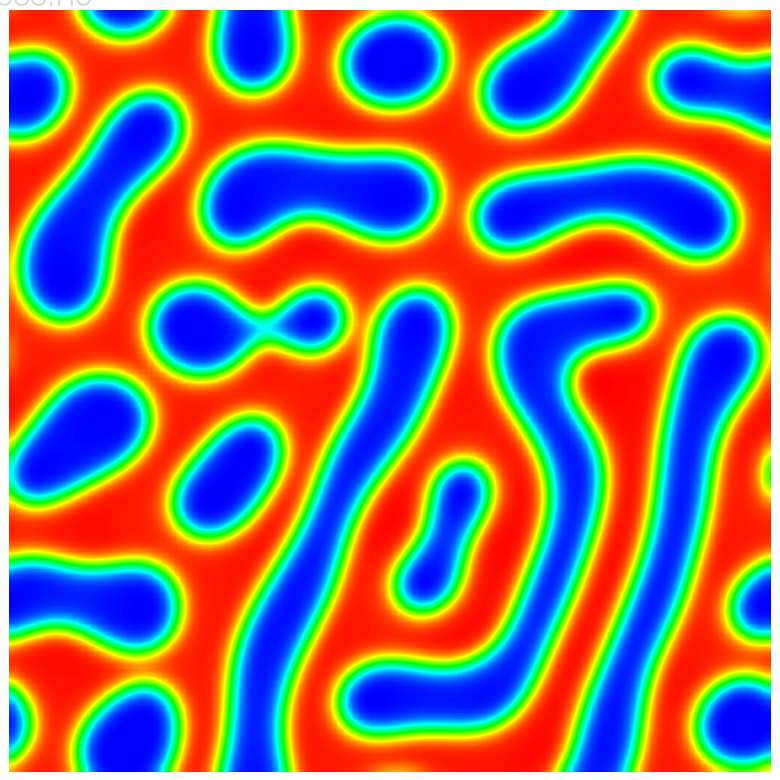}
\includegraphics[width=0.15\textwidth]{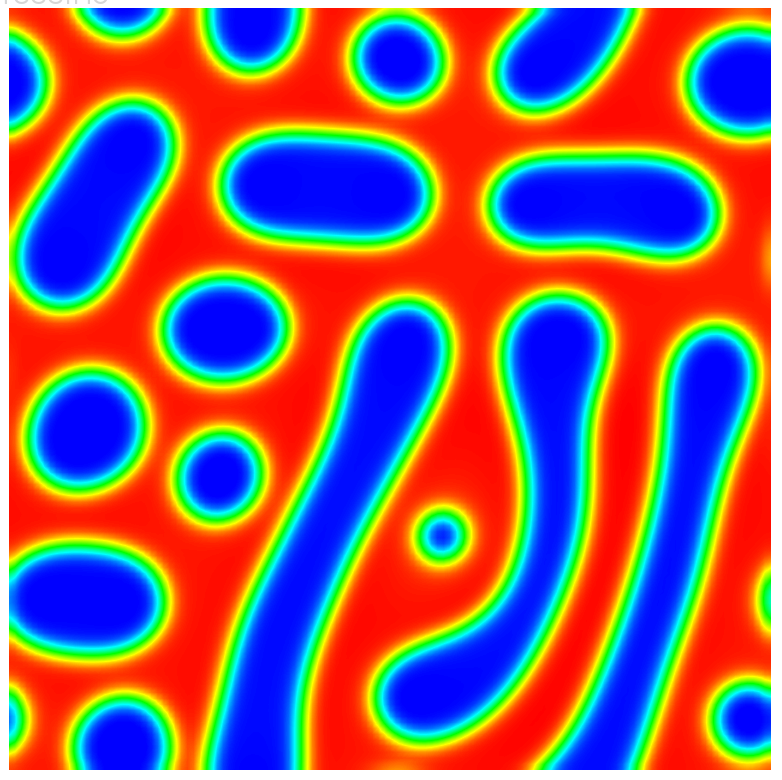}
\includegraphics[width=0.15\textwidth]{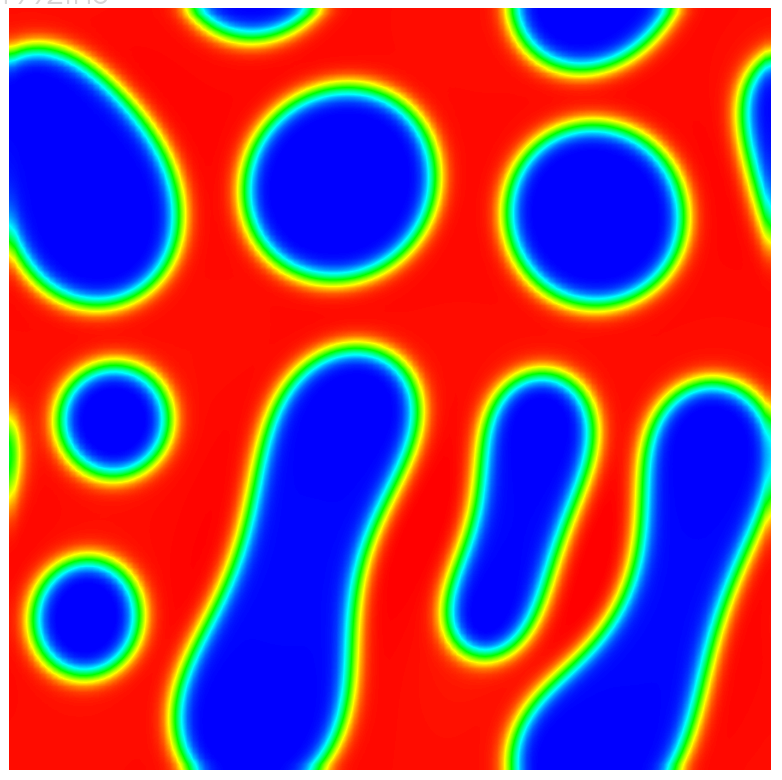}
}

\subfigure[$\overline{\phi}_0=0.3$]{
\includegraphics[width=0.15\textwidth]{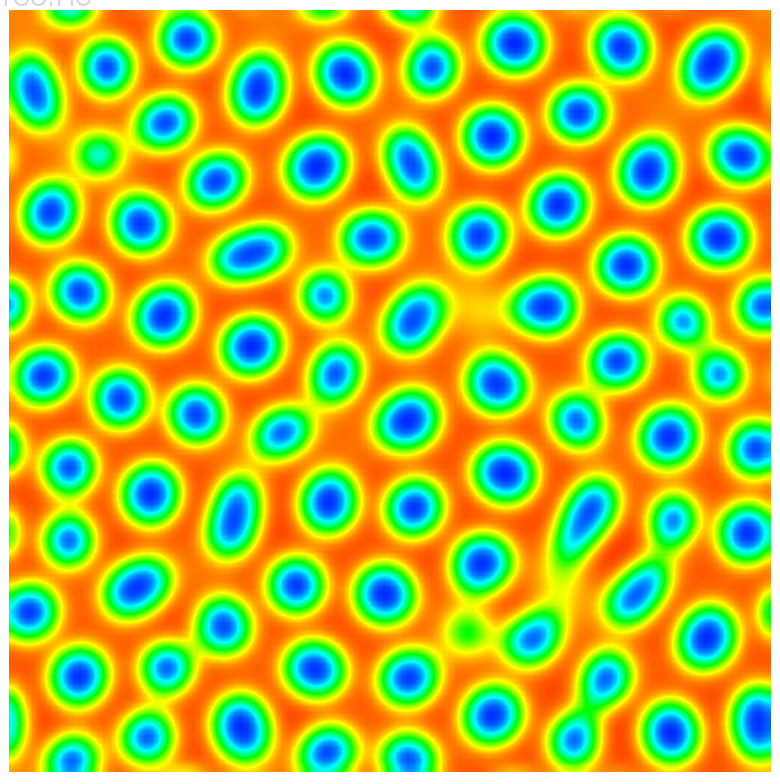}
\includegraphics[width=0.15\textwidth]{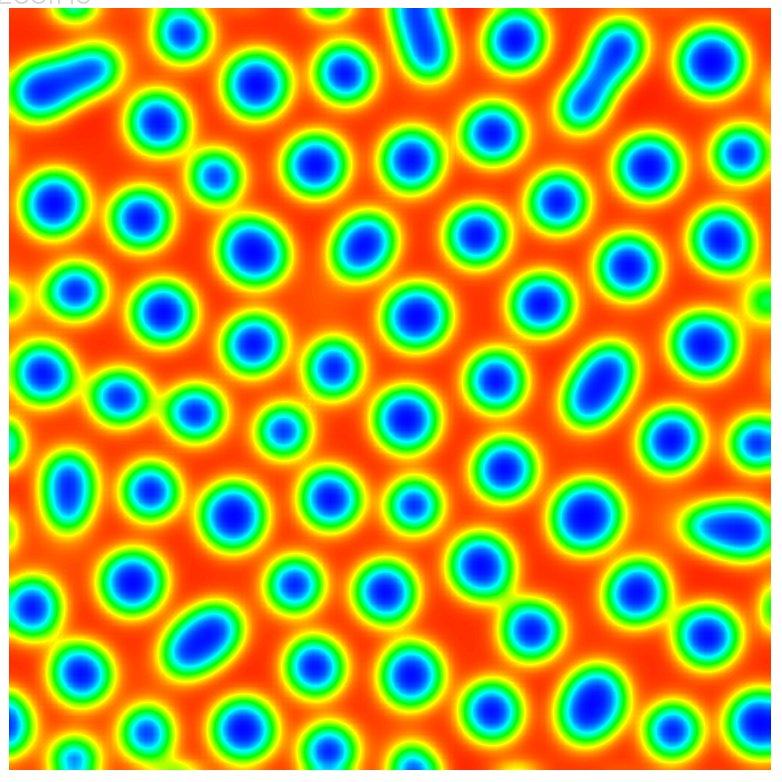}
\includegraphics[width=0.15\textwidth]{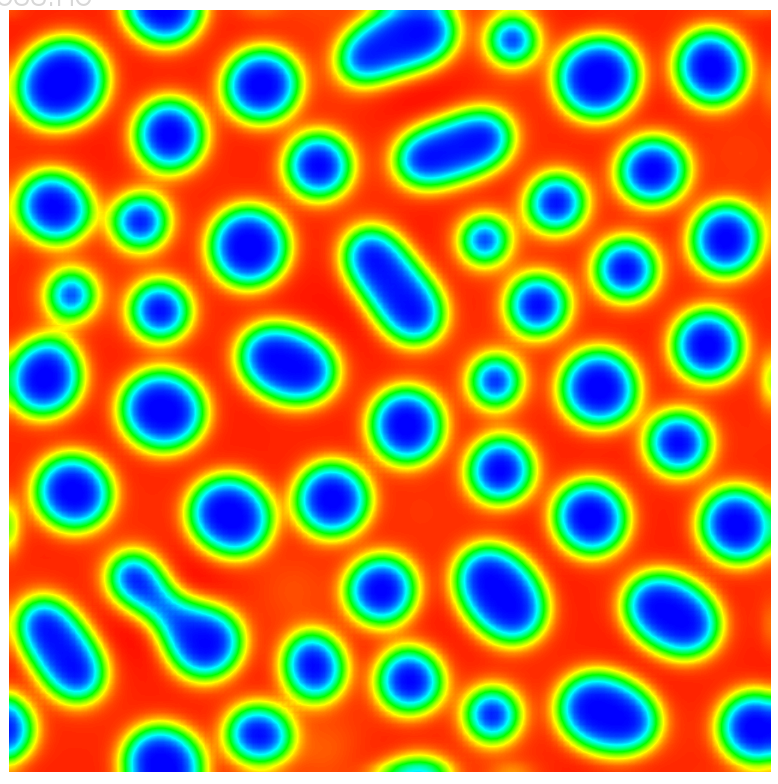}
\includegraphics[width=0.15\textwidth]{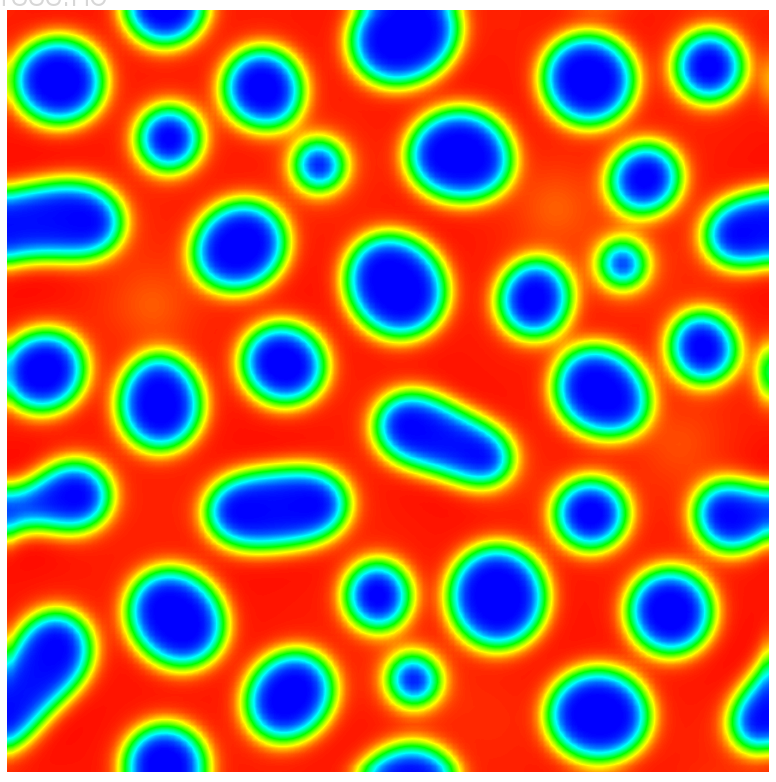}
\includegraphics[width=0.15\textwidth]{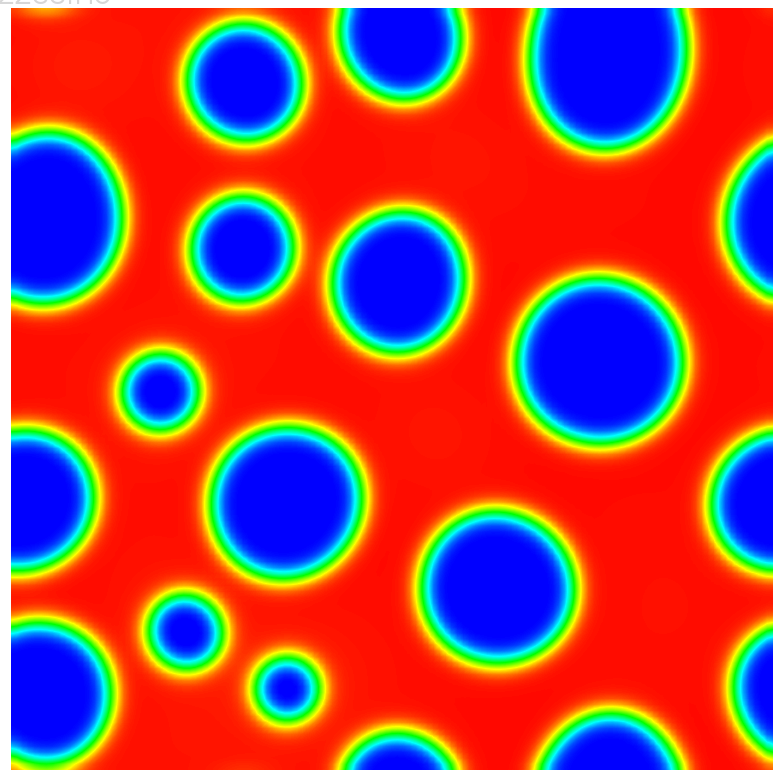}
}

\caption{Comparison of coarsening dynamics with different initial profiles. Here we choose $\overline{\phi}_0=-0.3,-0.1,0,0.1,0.3$ and $\alpha=0.7$ respectively. The profile of $\phi$ at various time slots are plotted. Here red color represents $\phi=1$, and blue color represents $\phi=-1$.}
\label{fig:Coarsening-phi0}
\end{figure}

Next, we study a more complicated case, where the initial averaged concentration varies in space. In specific, we consider the domain $\Omega=[0 \,\,\, 2] \times [0 \,\,\, 1]$, and use the the following initial profile
\beq
\phi(x,y,0) = \frac{1}{2} |x-1| + 10^{-3} rand(x,y),
\eeq
with $rand(x,y)$ generating uniform distributed random number in between -1 and 1. We pick $t_{\max}=10^{-2}$, and $t_{\min}=10^{-5}$. The numerical results with different time fractional $\alpha$ are summarized in Figure \ref{fig:phi1-strip}. We observe that near the middle part of the domain, spinodal decomposition dominates (as the volume fraction ratio is near 1); near both sides of the domain, nucleation dominates, as the one component has more volumes than the other. Also, the one with smaller fractional order $\alpha$ turns to have faster dynamics than those with larger fractional order $\alpha$.

\begin{figure}[H]
\subfigure[$\alpha=0.3$]{
\includegraphics[width=0.24\textwidth]{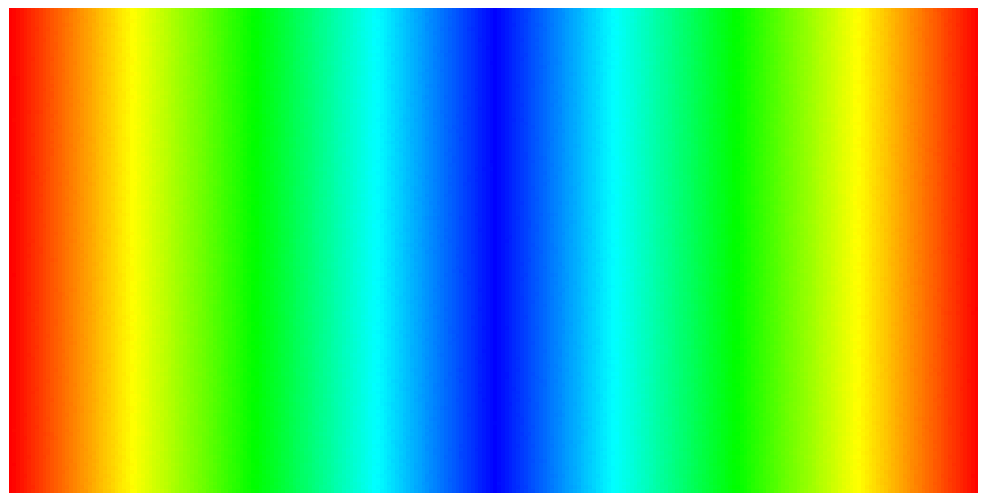}
\includegraphics[width=0.24\textwidth]{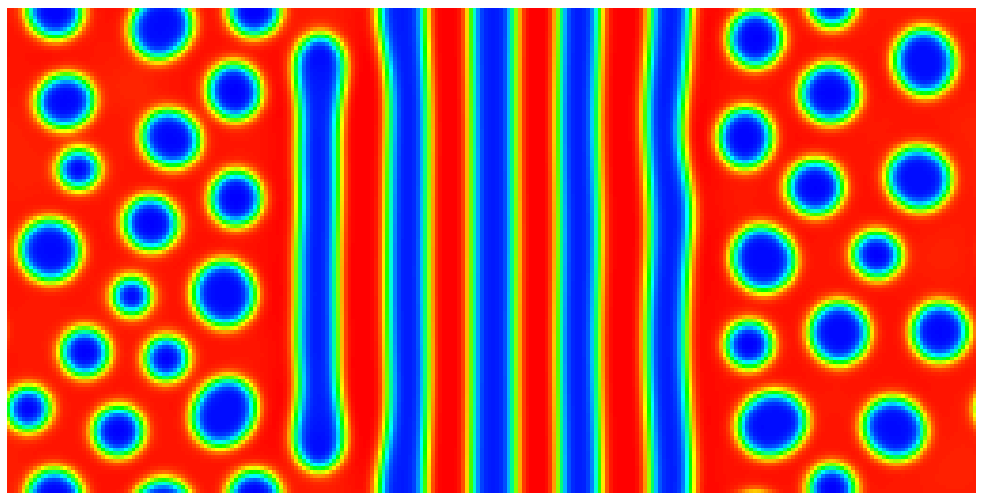}
\includegraphics[width=0.24\textwidth]{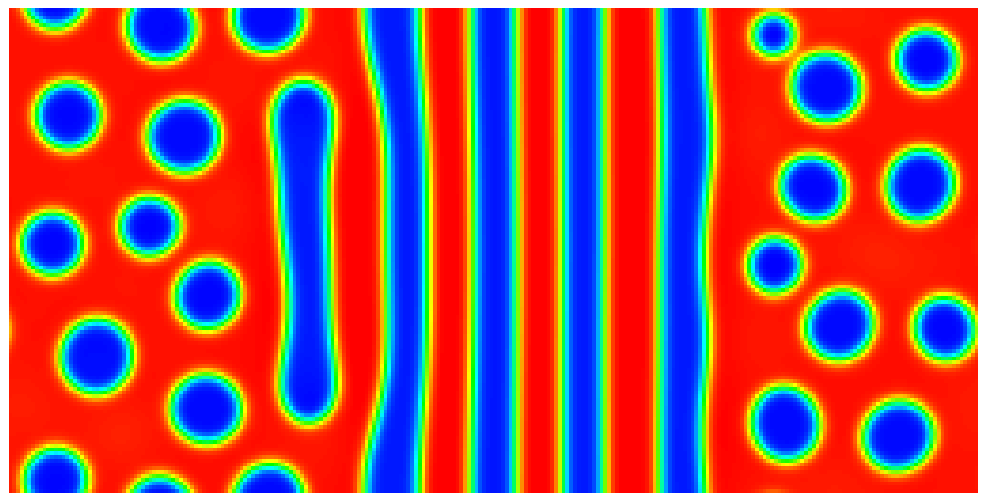}
\includegraphics[width=0.24\textwidth]{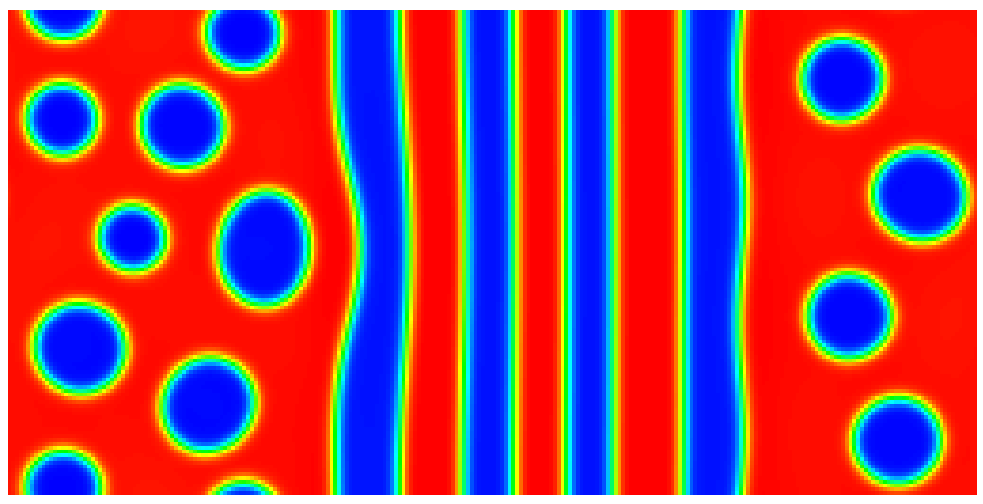}}

\subfigure[$\alpha =0.5$]{
\includegraphics[width=0.24\textwidth]{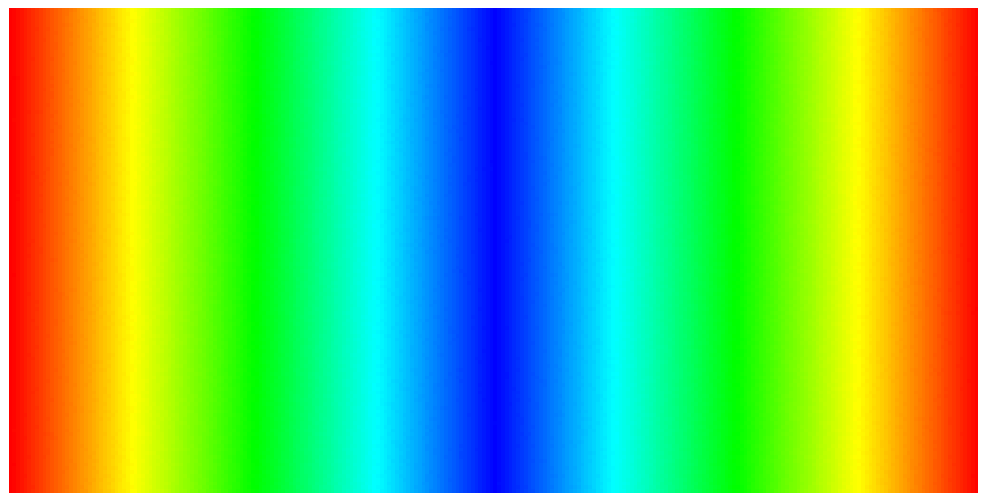}
\includegraphics[width=0.24\textwidth]{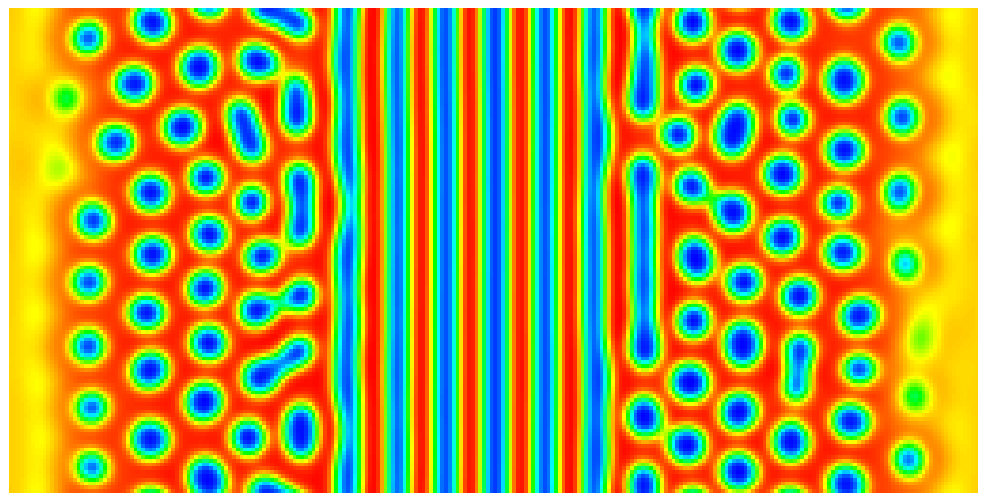}
\includegraphics[width=0.24\textwidth]{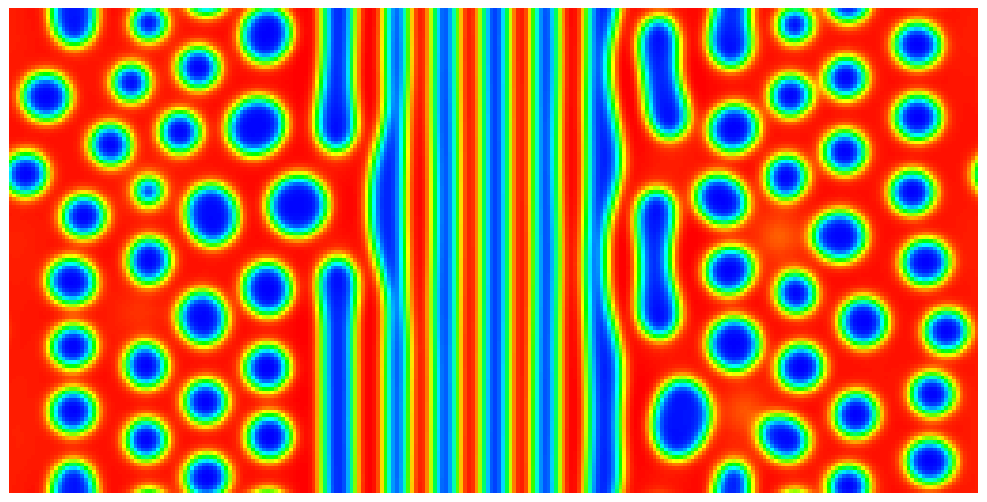}
\includegraphics[width=0.24\textwidth]{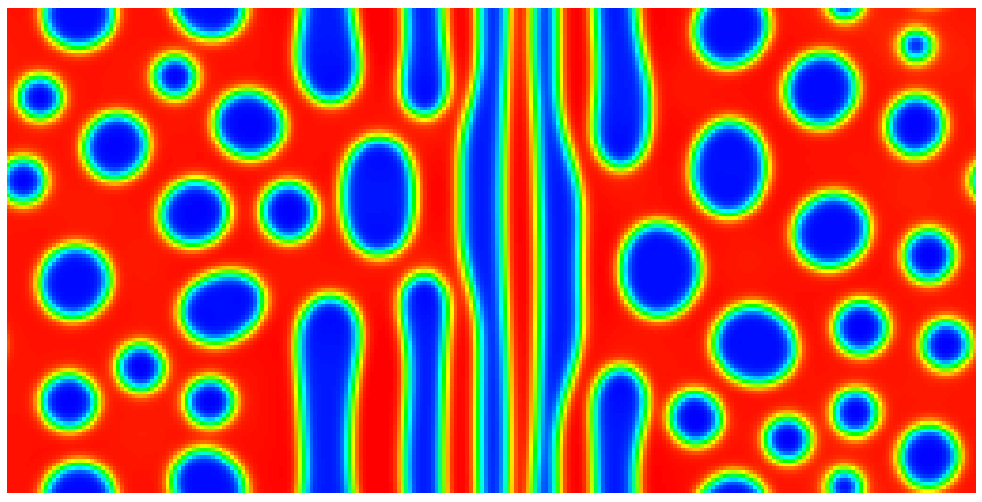}}

\subfigure[$\alpha=0.7$]{
\includegraphics[width=0.24\textwidth]{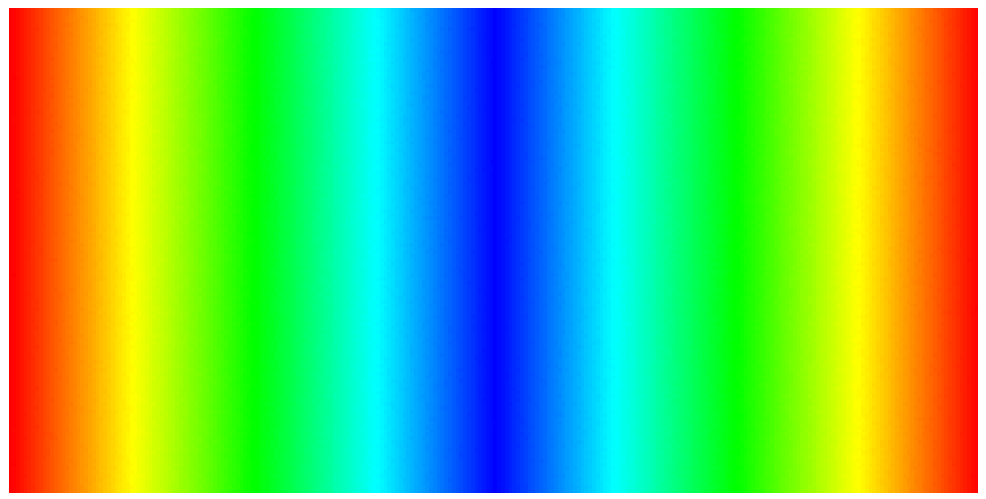}
\includegraphics[width=0.24\textwidth]{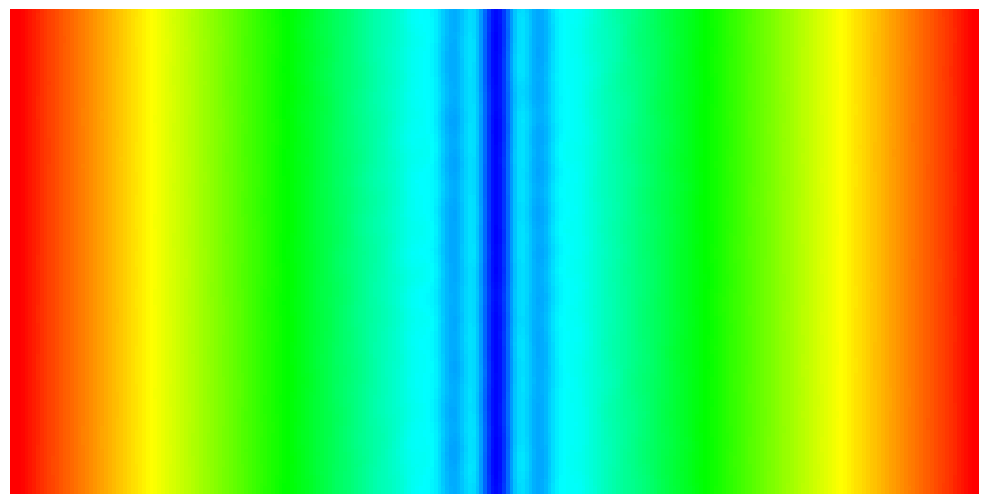}
\includegraphics[width=0.24\textwidth]{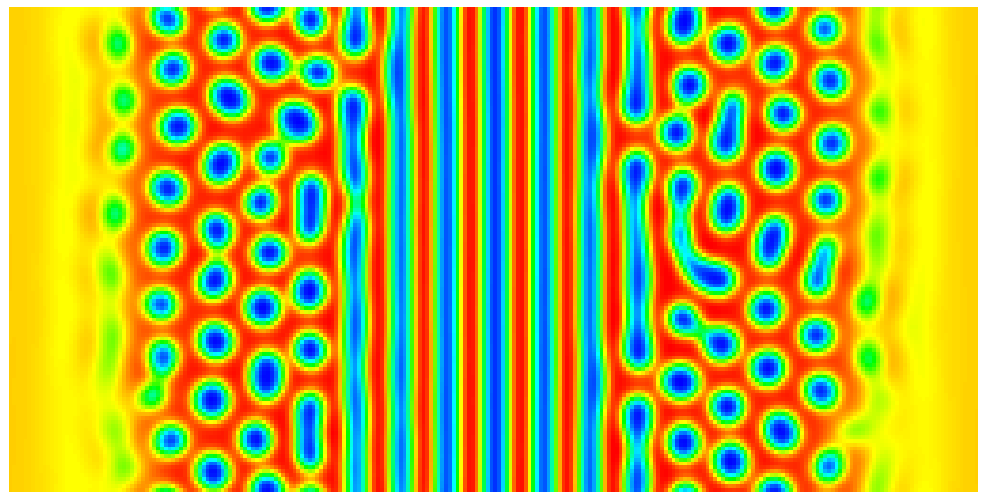}
\includegraphics[width=0.24\textwidth]{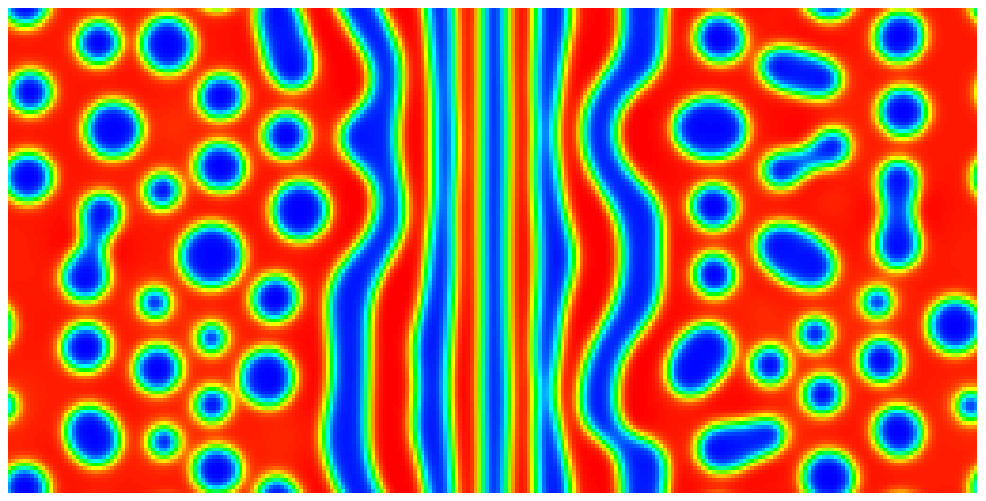}}

\caption{Phase separation dynamics with different fractional order parameter $\alpha$ and an initial condition with spatially dependent average volume fraction ratio. Here the profiles of $\phi$ at different time slots are shown, where (a-c) represents the cases with fractional order $\alpha=0.3, 0.5$ and $0.7$, respectively.}
\label{fig:phi1-strip}
\end{figure}

\subsection{Dynamics of  thin-film rupture during phase separation}
In this subsection, we investigate the dynamics of thin-film rupture during phase separation. We follow a similar setup as in \cite{GomezJCP}. The same model parameters are used as the previous example. Here we set the domain $\Omega = [0 \,\,\, 2] \times [0 \,\,\,\ 1]$, with $256 \times 128$ uniform meshes.  The initial profile for $\phi$ is set as
\beq
\phi(\x, 0 ) = \left\{
\bea{l}
0.001 rand(-1,1), \quad \mbox { if } | x - \frac{5L_x}{6} + \frac{r_0}{2} \sin (10 \pi y)|< r_0, \\
-0.1 +  0.001 rand(-1,1), \quad \mbox { if } | x - \frac{3L_x}{6} + \frac{r_0}{2} \sin (10 \pi y)|< r_0, \\
-0.2 +   0.001 rand(-1,1), \quad \mbox { if } | x - \frac{L_x}{6} + \frac{r_0}{2} \sin (10 \pi y)|< r_0, \\
-1, \quad  \mbox{other wise.} \\
\eea
\right.
\eeq
where $r_0 = 0.05$. The numerical results for $\phi$ at various times are shown in Figure \ref{fig:strip}. We observe that different initial concentrations of liquid thin film show different rupture dynamics, where a spinodal-decomposition driven rupture pattern is observed on the right side, and a nucleation driven rupture pattern is observed on the left side.

\begin{figure}[H]
\center
\subfigure[]{\includegraphics[width=0.3\textwidth]{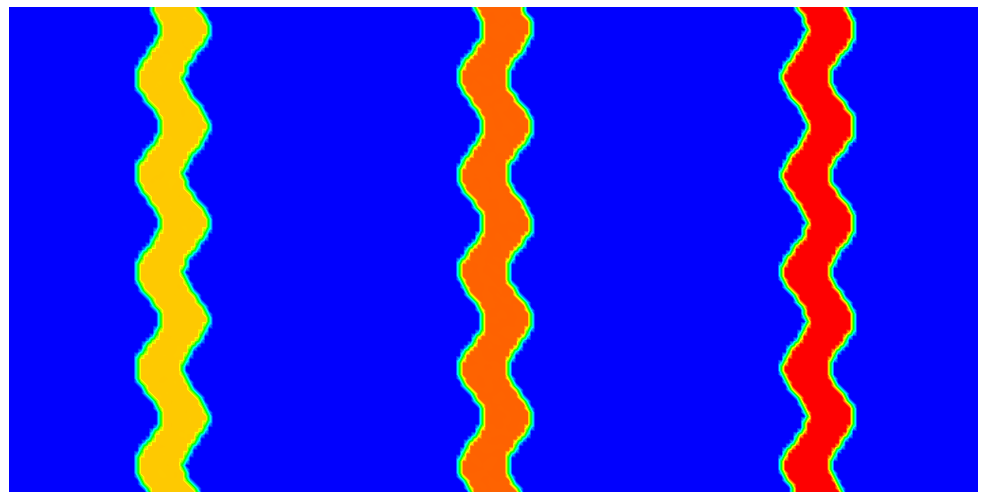}}
\subfigure[]{\includegraphics[width=0.3\textwidth]{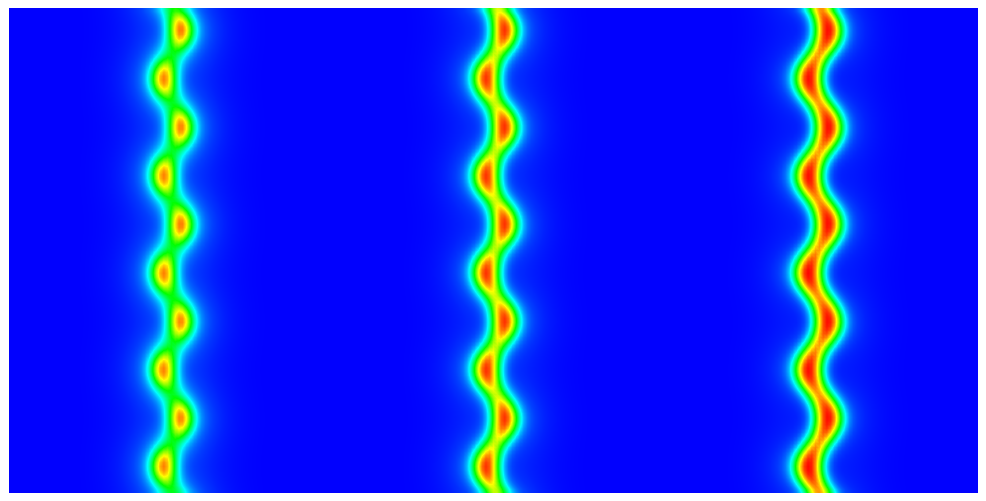}}
\subfigure[]{\includegraphics[width=0.3\textwidth]{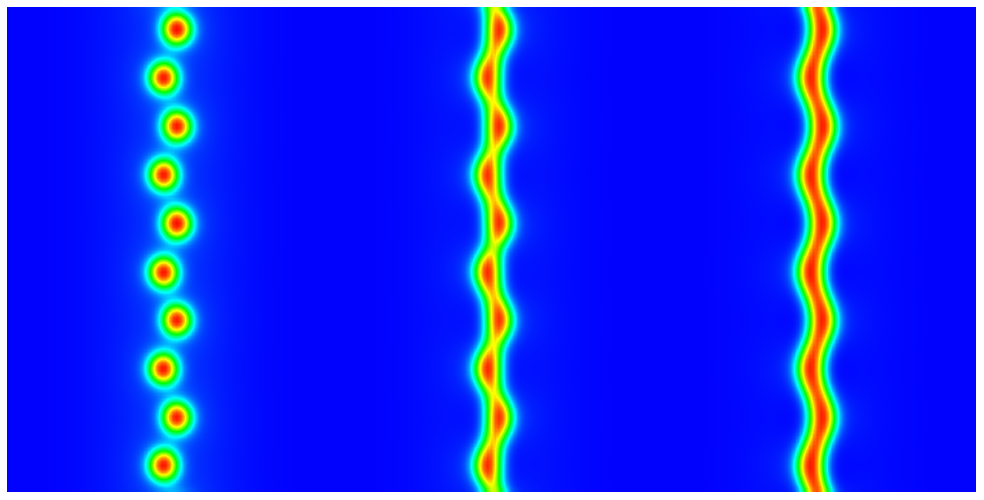}}

\subfigure[]{\includegraphics[width=0.3\textwidth]{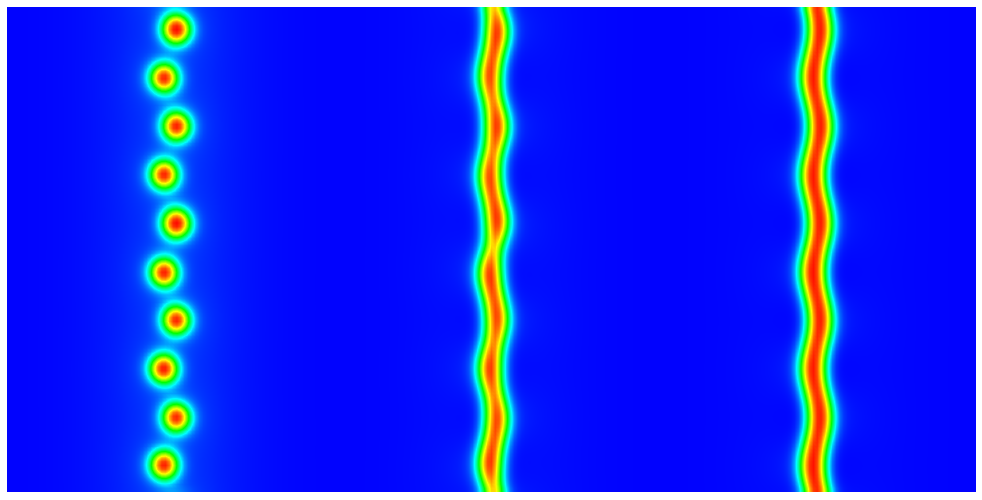}}
\subfigure[]{\includegraphics[width=0.3\textwidth]{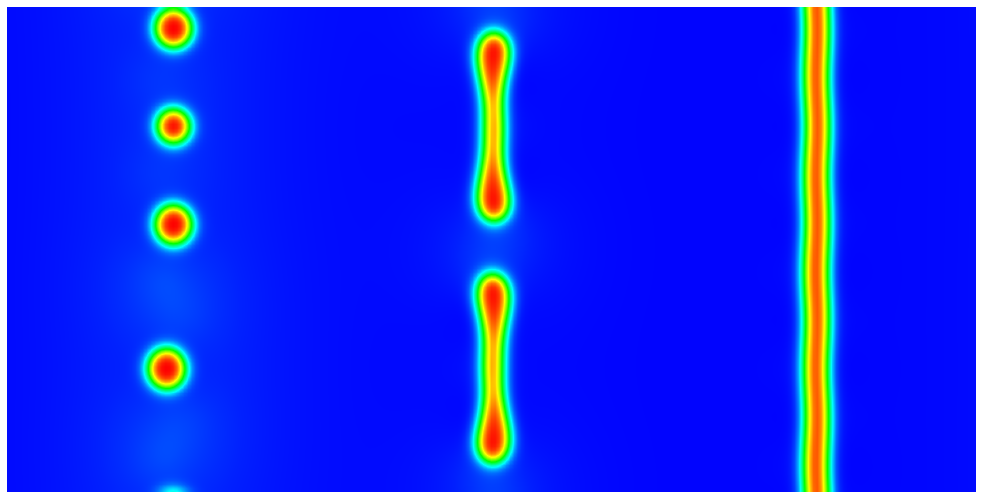}}
\subfigure[]{\includegraphics[width=0.3\textwidth]{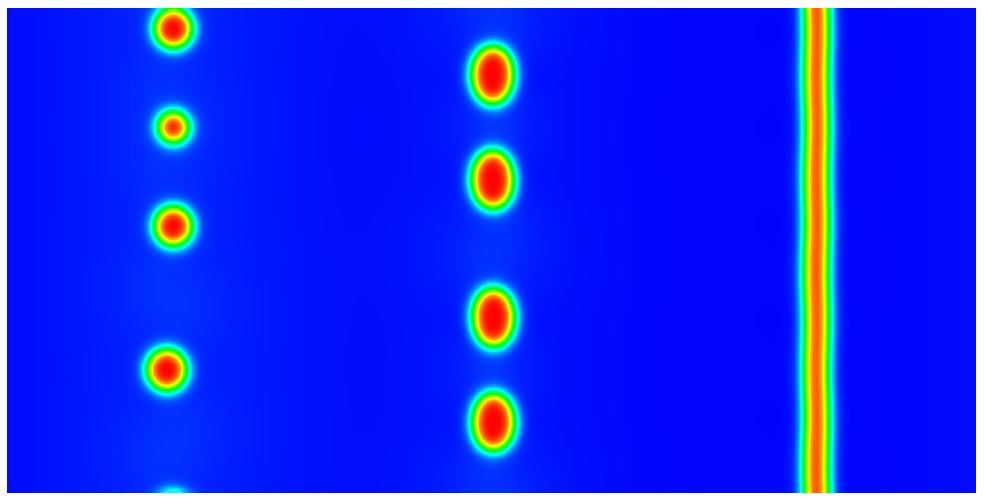}}

\caption{Rupture dynamics of liquid thin-film during phase separation of a binary fluid mixture. Here three thin-film columns (fluid mixture with different volume fraction ratio) are placed in a pure component. Different dynamics are observed due to the volume fraction ratio (nucleation dynamics on the left side, and spinodal dynamics on the right side). The profiles of $\phi$ at various time slots are shown.}
\label{fig:strip}
\end{figure}

In addition, the energy evolution with time is summarized in Figure \ref{fig:strip-energy}(a), where we observe the energy is decreasing in time. Also, the time step size at different times is summarized in Figure \ref{fig:strip-energy}(b), where we observe a relatively large time step is used at the stages when energy is decreasing slowly. This saves the computational time noticeably.

\begin{figure}[H]
\center
\subfigure[]{\includegraphics[width=0.4\textwidth]{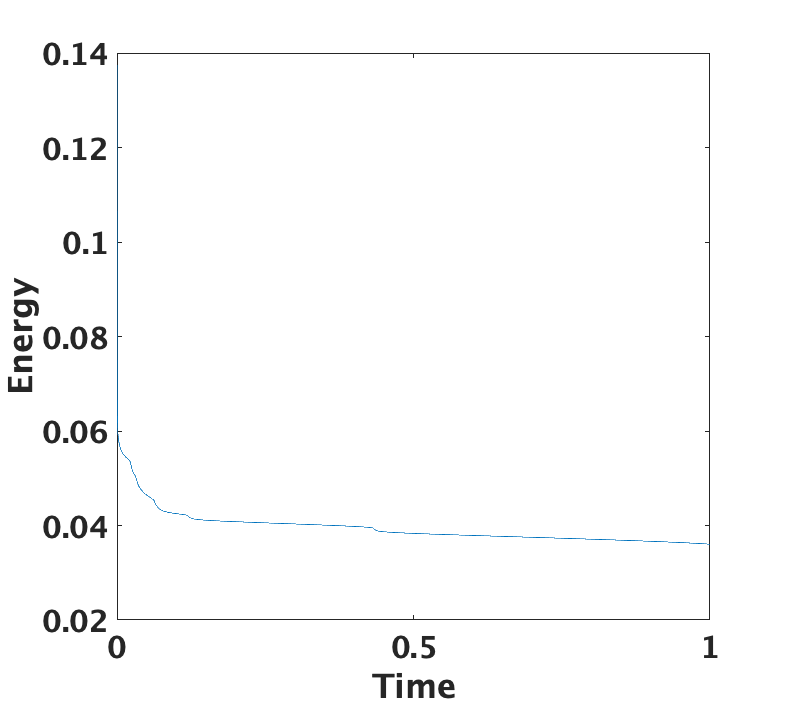}}
\subfigure[]{\includegraphics[width=0.4\textwidth]{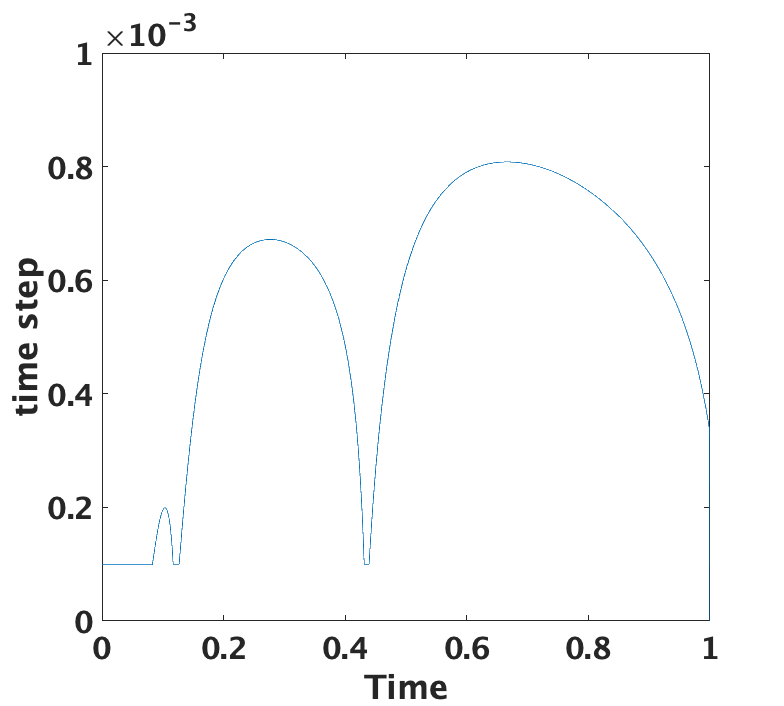}}
\caption{Energy evolution and time step sizes. In this figure, the energy evolution for the simulation in Figure \ref{fig:strip} is shown in (a); and the adaptive time step sizes used for the simulation in Figure \ref{fig:strip} is shown in (b).}
\label{fig:strip-energy}
\end{figure}

Also, to numerically verify the theorem results of \eqref{eq:mass-conservation}, we generate the graph showing the error of the total mess, i.e., the difference of
$$
\int_\Omega \phi(\x,t)d\x  -\int_\Omega \phi(\x,0) d\x,
$$
which is summarized in Figure \ref{fig:strip-mass}. We observe that the difference is in the order of $O(10^{-14})$, which is negligible. This is in strong agreement with \eqref{eq:mass-conservation}, i.e., our proposed numerical algorithm preserves the total mass of the phase variable $\phi$.

\begin{figure}[H]
\center
\includegraphics[width=0.65\textwidth]{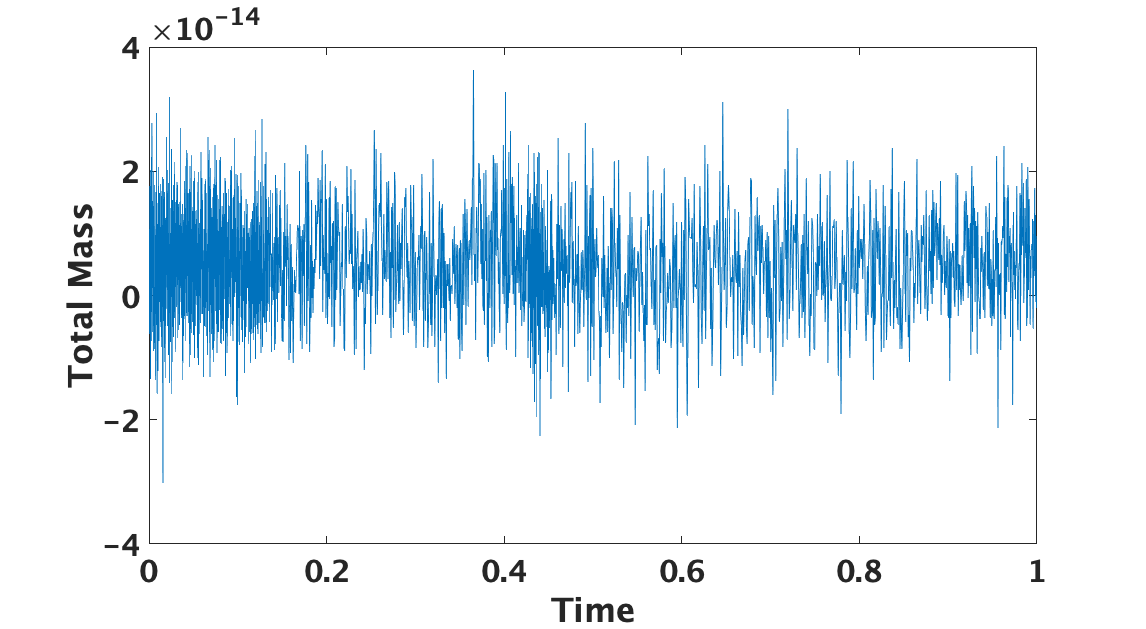}
\caption{Time evolution of the total volume error for the phase variable $\phi$. This figure shows the time evolution of $\int_\Omega \phi(\x,t)d\x  -\int_\Omega \phi(\x,0) d\x$. }
\label{fig:strip-mass}
\end{figure}

\section{Conclusion}
In this paper, we propose a new algorithm for solving the time-fractional Cahn-Hilliard equation. The resulted numerical scheme has several advantages. It allows non-uniform time step, such that adaptive time step sizes could be utilized to save computational time. It is unconditionally energy stable, assuring the stability of the numerical algorithm. The existence and uniqueness of the numerical solution are also verified theoretically. Several numerical tests are shown to confirm these advantages.

\section*{Acknowledgments}

The work of Jun Zhang is supported by the National Natural Science Foundation of China (No. 11901132), the China Scholarship Council (No. 201908525061) and the Science and Technology Program of Guizhou Province (No.[2020]1Y013). Jia Zhao would like to acknowledge the support by National Science Foundation, USA, with grant numner DMS-1816783.

%The work of Jun Zhang is supported by the National Natural Science Foundation of China (No.11901132), the China Scholarship Council (No.201908525061) and the Science and Technology Program of Guizhou Province (No.[2020]1Y013). The work of Jia Zhao is supported by NSF-DMS-1816783.

\bibliography{ref}
\bibliographystyle{unsrt}

\end{document}